\documentclass[11pt]{article}

\usepackage[utf8]{inputenc}
\usepackage[T1]{fontenc}
\usepackage{amsmath,amssymb,amsthm,mathtools}
\usepackage{enumitem}
\usepackage[margin=1in]{geometry}
\usepackage[colorlinks=true,linkcolor=blue,citecolor=blue,urlcolor=blue]{hyperref}

\newtheorem{theorem}{Theorem}[section]
\newtheorem{proposition}[theorem]{Proposition}
\newtheorem{lemma}[theorem]{Lemma}
\newtheorem{corollary}[theorem]{Corollary}
\newtheorem{conjecture}[theorem]{Conjecture}
\theoremstyle{definition}
\newtheorem{definition}[theorem]{Definition}
\newtheorem{problem}[theorem]{Problem}
\theoremstyle{remark}
\newtheorem{remark}[theorem]{Remark}

\newcommand{\Fin}{\mathrm{Fin}}
\newcommand{\ED}{\mathcal{ED}}
\newcommand{\HH}{\mathcal H}
\newcommand{\GG}{\mathcal G}
\newcommand{\PC}{\mathcal{PC}}
\newcommand{\Sc}{\mathcal C}
\newcommand{\Pow}{\mathcal P}
\newcommand{\concat}{\mathbin{{}^\frown}}
\newcommand{\restr}{\mathbin{\upharpoonright}}
\newcommand{\rk}{\operatorname{rk}}
\newcommand{\suc}{\operatorname{Succ}}
\newcommand{\Fat}{\operatorname{Fat}}
\newcommand{\HomC}{\operatorname{Hom}}
\newcommand{\sh}{\operatorname{sh}}
\newcommand{\cfrak}{\mathfrak c}
\newcommand{\Fw}{\Fin_\omega}
\newcommand{\Fwp}{\Fin'_\omega}
\newcommand{\Hw}{\HH_{<\omega}}

\title{Tree-derived ideals: Fubini iterations, limit amalgamations,
and Kat\v etov obstructions}
\author{Jos\'e de Jes\'us Pelayo G\'omez\thanks{Independent
researcher. Email: \texttt{pelayuss@gmail.com}.}}
\date{July 16, 2026}

\begin{document}
\maketitle

\begin{abstract}
We develop a machinery for deriving ideals on a countable set from a
partition of $\omega$ indexed by the tree $\omega^{<\omega}$: a
derivative operator on trace trees, parametrized by an auxiliary
ideal $\mathcal J$, whose iterates produce a strict transfinite
hierarchy $\HH^{\mathcal J}_\alpha$ of proper ideals on $\omega$,
tall from level one onward. The machinery is canonical --- all
derived objects are independent of the chosen partition --- and its
finite levels recover exactly the Fubini powers:
$\HH_n\cong\Fin^{\otimes(n+1)}$, anchored on a single copy of
$\omega$. Three blocks of results demonstrate its scope.
\emph{Structure}: presentation independence and local homogeneity,
the Fubini recursion, descriptive complexity
($\mathbf\Pi^1_1$-completeness of the full hierarchy), and the nested
limit $\Hw=\bigcup_n\HH_n$, a new amalgamation of all finite Fubini
powers. \emph{Obstruction}: writing $\Fw$ for Kwela's canonical
inductive limit and $\Fwp$ for his independent-partitions limit, we
prove
\[
 \Fw\ \not\leq_K\ \Hw ,
\]
although $\Fwp\sqsubseteq\Hw$ and every finite coherent fragment of
such a reduction is realizable over $\Hw$ --- a genuinely infinitary
failure of compactness. The proof introduces the \emph{essential
depth} of a function with respect to the tree, an invariant monotone
along Kat\v etov reductions of $\Fin\otimes\Fin$, and yields the
sharp non-extension bound $N(m)=m+2$; no computation of the Borel
rank of the limit $\Hw$ is involved. It follows that $\Hw$ contains
no isomorphic copy of $\Fw$, and, via $\Fwp\sqsubseteq\Hw$, we obtain
a new proof of the Kat\v etov strengthening $\Fw\not\leq_K\Fwp$ of
Kwela's Theorem~4.6. \emph{Applications}: the chromatic ideals
$\GG_k$, which lie inside the second derived level but not inside the
first, ordered by inclusion exactly as divisibility while the
Kat\v etov order recodes the arithmetic ($k\geq\ell$ implies
$\GG_k\leq_{KB}\GG_\ell$, exactly so among radial reductions); the
orthogonality of Cantor--Bendixson rank and derivative rank; and the
quotient $\Pow(\omega)/\HH$, a $\sigma$-closed reduced power
satisfying $\mathbb B\cong\mathbb B^\omega/\Fin$, containing
$\Pow(\omega)/\Fin$ regularly and, under CH, forcing-equivalent to
$(\Pow(\omega)/\Fin)^+$.
\end{abstract}

\medskip
\noindent\textbf{2020 Mathematics Subject Classification.}
Primary 03E05; Secondary 03E15, 03E40.

\smallskip
\noindent\textbf{Keywords.}
Ideals on countable sets, Kat\v etov order, Fubini products,
inductive limits, tree derivatives, quotient forcing.

\section{Introduction}\label{sec:intro}

Borel ideals on countable sets are a standard testing ground for the
combinatorics of the reals, and the Kat\v etov order $\leq_K$ is the
comparison tool that organizes them: it calibrates Ramsey-type
properties, destructibility by forcing, and critical ideals for
classes of spaces \cite{HrusakEtAl,CDU,FKK}. Within this landscape
the finite Fubini powers $\Fin^{\otimes n}$ play the role of
canonical benchmarks: they are the prototypical ideals of separation
rank $n$ \cite{DebsSaintRaymond}, they have the strong rigidity
property that any Kat\v etov reduction to an ideal can be upgraded to
an isomorphic copy inside it \cite{Barbarski}, and the question of
how they can be amalgamated into a single limit object of rank
$\omega$ is the subject of Kwela's theory of inductive limits of
ideals \cite{Kwela}.

This paper develops a machinery for producing and analyzing such
objects from a single combinatorial device: a partition
$\langle A_s:s\in\omega^{<\omega}\rangle$ of $\omega$ indexed by the
tree $\omega^{<\omega}$, introduced in \cite{Pelayo}. A tree-shaped
presentation of $\omega$ is not by itself a contribution --- any
countable structure transports along a bijection. What we claim as
new is the machinery built on it: (i) a \emph{derivative operator}
$D_{\mathcal J}$ on trace trees, parametrized by an arbitrary
auxiliary ideal $\mathcal J\supseteq\Fin$; (ii) its
\emph{presentation invariance} (Proposition~\ref{prop:independence}),
which makes every derived object canonical rather than an artifact of
the chosen partition; (iii) the \emph{exact recovery of Fubini
iterations} at the finite levels (Theorem~\ref{thm:fubini}), which
anchors all powers $\Fin^{\otimes(n+1)}$ coherently on a single copy
of $\omega$; (iv) a \emph{new limit object}, the nested amalgamation
$\Hw=\bigcup_n\HH_n$; and (v) the invariants the machinery supports,
above all the \emph{essential depth} of a function with respect to
the tree (Definition~\ref{def:depth}).

The obstruction block is motivated by Kwela's inductive limits
\cite{Kwela}. Kwela constructs the canonical inductive limit $\Fw$ of
the Fubini powers along suffix projections, and a second
amalgamation $\Fwp$ built from an independent family of partitions;
his Theorem~4.6 shows that $\Fwp$ contains no isomorphic copy of
$\Fw$, and his Conjecture~5.3 asks for analogous universal objects at
every countable limit ordinal --- with the case $\alpha=\omega$
already settled in \cite{Kwela} by $\Fwp$ itself (see
Remark~\ref{rem:kwelascope} for the precise scope of the present
results relative to that conjecture). The nested limit $\Hw$ is a
third natural amalgamation of the same finite powers, and comparing
the three is the obstruction problem this paper solves on one side:
$\Fw\not\leq_K\Hw$ (Theorem~\ref{thm:main}).

Two features of the proof deserve emphasis. First, the failure is
genuinely infinitary: every finite coherent fragment of a would-be
reduction is realizable over $\Hw$
(Proposition~\ref{prop:fragments}), so no argument through a fixed
finite number of levels can succeed; what fails is the simultaneous
coherence of all levels, and Theorem~\ref{thm:nonext} quantifies the
failure exactly, with the sharp bound $N(m)=m+2$. Second, the method
is new in this context: Kwela's proof of Theorem~4.6 proceeds by a
Borel-rank surgery that exploits the independence of the partitions
generating $\Fwp$, an independence that the nested partition
$\langle A_s\rangle$ conspicuously lacks. Our proof replaces the rank
computation by the combinatorial invariant of essential depth,
monotone along reductions of $\Fin\otimes\Fin$
(Theorem~\ref{thm:monotone}); no computation of the Borel rank of the
limit $\Hw$ is involved. Composing the main theorem with
$\Fwp\sqsubseteq\Hw$ yields an independent proof of the Kat\v etov
strengthening $\Fw\not\leq_K\Fwp$ of Kwela's theorem
(Corollary~\ref{cor:kwela46}).

The applications block demonstrates the scope of the machinery on
three fronts: a family of $F_\sigma$ ideals canonically attached to
the tree (the chromatic ideals $\GG_k$, which cannot be defined
without the partition and which the derived hierarchy measures
exactly), the interaction of the derivative rank with the classical
Cantor--Bendixson rank (they are orthogonal), and the quotient
algebra $\Pow(\omega)/\HH$ (a $\sigma$-closed reduced power with a
regular copy of $\Pow(\omega)/\Fin$). The results are organized in
three blocks.

\medskip\noindent\textbf{I. Construction and structure.}

\begin{enumerate}[label=(\Alph*)]
 \item \textbf{Rigidity and homogeneity} (Section~\ref{sec:prelim}):
 any two tree partitions are isomorphic via a level-preserving
 bijection of $\omega$; all objects of the paper are
 presentation-independent, localized quotients are isomorphic to the
 global one, and $\mathbb B\cong\mathbb B^\omega/\Fin$
 (Corollary~\ref{cor:selfref}).
 \item \textbf{The derived hierarchy} (Section~\ref{sec:DJ}): for
 every auxiliary ideal $\mathcal J\supseteq\Fin$, the derivative
 $D_{\mathcal J}$ produces a strict hierarchy of $\omega_1$ ideals
 whose successor steps are exact Fubini sums, whose positivity is
 witnessed by $\mathcal J$-positive Laver trees, and whose union is
 $\mathbf\Pi^1_1$-complete for Borel $\mathcal J$. The case
 $\mathcal J=\Fin$ recovers the powers $\Fin^{\otimes n}$ anchored on
 a single copy of $\omega$.
\end{enumerate}

\medskip\noindent\textbf{II. The obstruction theorem.}

\begin{enumerate}[label=(\Alph*),resume]
 \item \textbf{Main theorem} (Section~\ref{sec:main}):
 $\Fw\not\leq_K\Hw$, although $\Fwp\sqsubseteq\Hw$ and every finite
 coherent fragment is realizable (Proposition~\ref{prop:fragments}).
 The bound of Theorem~\ref{thm:nonext} is sharp. The main theorem also
 implies that $\Hw$ contains no isomorphic copy of $\Fw$; by
 \ref{K:katFw}, the two non-embedding statements are equivalent. We
 also prove that $\Hw$ has property $\mathrm{Kat}$
 (Theorem~\ref{thm:katH}), so the remaining comparisons out of $\Hw$
 may be phrased in terms of copies. Combining $\Fwp\sqsubseteq\Hw$
 with the main theorem gives an alternative route to the known
 Kat\v etov strengthening $\Fw\not\leq_K\Fwp$ of Kwela's
 Theorem~4.6.
\end{enumerate}

\medskip\noindent\textbf{III. Structural applications.}

\begin{enumerate}[label=(\Alph*),resume]
 \item \textbf{Chromatic ideals} (Sections~\ref{sec:Gk}
 and~\ref{sec:katGk}): the ideals $\GG_k$ generated by the homogeneous
 sets of $c_k(\{x,y\})=\Delta(b_x,b_y)\bmod k$ --- a family the
 derived hierarchy locates inside its second level but not inside its
 first (Theorems~\ref{thm:GH2} and~\ref{thm:H1PC}) --- satisfy
 $\GG_k\subseteq\GG_\ell\Leftrightarrow\ell\mid k$, while
 $k\geq\ell$ implies $\GG_k\leq_{KB}\GG_\ell$; within radial
 reductions the criterion $k\geq\ell$ is exact. The first open case of
 the conjectured classification is $\GG_2\leq_K\GG_3$.
 \item \textbf{Orthogonality of ranks} (Section~\ref{sec:CB}): the
 Cantor--Bendixson rank of the branch body and the rank of the
 derivative $D_{\mathcal J}$ measure orthogonal phenomena.
 \item \textbf{The quotient} (Section~\ref{sec:quotient}):
 $\Pow(\omega)/\HH$ decomposes exactly as the reduced product of its
 localizations, is $\sigma$-closed, contains $\Pow(\omega)/\Fin$
 regularly, has continuum-sized antichains of constant trace and,
 under CH, is forcing-equivalent to $(\Pow(\omega)/\Fin)^+$.
\end{enumerate}

\section{Preliminaries}\label{sec:prelim}

\subsection{The partition and the branch dictionary}

\begin{definition}\label{def:partition}
A \emph{tree partition} of $\omega$ is a family
$\langle A_s:s\in\omega^{<\omega}\rangle$ such that
$A_\varnothing=\omega$, each $A_s$ is infinite,
$\{A_{s\concat n}:n\in\omega\}$ is a partition of $A_s$, and any two
distinct points are separated at some level.
\end{definition}

For $x\in\omega$ there is a unique branch $b_x\in\omega^\omega$ with
$x\in A_{b_x\restr n}$ for all $n$; the map $x\mapsto b_x$ is
injective and its image $B=\{b_x:x\in\omega\}$ meets every cylinder
$[s]$ in an infinite set. For $p\neq q$ we write
$\Delta(p,q)=\min\{n:p(n)\neq q(n)\}$ and
$\Delta(x,y)=\Delta(b_x,b_y)$. For $X\subseteq\omega$,
\[
 T_X=\{s:X\cap A_s\neq\varnothing\},\qquad
 T_X^\infty=\{s:|X\cap A_s|=\omega\}.
\]

\begin{lemma}[Realization Lemma]\label{lem:realization}
Choose points $p_{s,n}\in A_{s\concat n}$, globally distinct, and set
$P_s=\{p_{s,n}:n\in\omega\}$, $X_S=\bigcup_{s\in S}P_s$. If
$S\subseteq\omega^{<\omega}$ is a tree, then $T^\infty_{X_S}=S$.
Moreover $S\mapsto X_S$ is continuous.
\end{lemma}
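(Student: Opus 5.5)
The plan is to prove the two inclusions of $T^\infty_{X_S}=S$ separately, using only the nesting of the partition and the downward closure of $S$, and then to read continuity off the definition of $X_S$ coordinate by coordinate. Throughout I use the elementary fact that for $u,t\in\omega^{<\omega}$ we have $A_u\cap A_t\neq\varnothing$ iff $u$ and $t$ are comparable. This follows by induction from Definition~\ref{def:partition}: siblings $A_{s\concat n}$, $A_{s\concat m}$ with $n\neq m$ are disjoint, and each $A_{s\concat n}\subseteq A_s$.

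For $S\subseteq T^\infty_{X_S}$, fix $s\in S$. Every $p_{s,n}$ lies in $A_{s\concat n}\subseteq A_s$, so $P_s\subseteq X_S\cap A_s$. The points $p_{s,n}$ are globally distinct, so $P_s$ is infinite, and therefore $|X_S\cap A_s|=\omega$.

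For the reverse inclusion, fix $t\notin S$; I will show that $X_S\cap A_t$ is finite. A point of $X_S$ is some $p_{s,n}$ with $s\in S$. It can lie in $A_t$ only if $s\concat n$ and $t$ are comparable, and there are three cases.
\begin{enumerate}[label=(\roman*)]
 \item $t\subseteq s$. Since $S$ is a tree and $s\in S$, this forces $t\in S$, which is excluded.
 \item $s\concat n=t$. Then $s$ is the parent of $t$ and $n$ is the last entry of $t$, so this case contributes at most one point.
 \item $s\concat n\subsetneq t$. Then $s=t\restr k$ and $n=t(k)$ for some $k<|t|-1$, so this case contributes at most $|t|$ points.
\end{enumerate}
Hence $X_S\cap A_t$ is finite and $t\notin T^\infty_{X_S}$. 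The case $t=\varnothing$ is consistent with this: $\varnothing\notin S$ means $S=\varnothing$, so $X_S=\varnothing$. This case analysis is the only step needing care; the one point to get right is separating (i) from (ii) when $s\concat n\supseteq t$.

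For continuity, identify subsets of $\omega^{<\omega}$ and of $\omega$ with points of the Cantor spaces $2^{\omega^{<\omega}}$ and $2^\omega$. Because the $p_{s,n}$ are globally distinct, each $x\in\omega$ equals $p_{s,n}$ for at most one pair $(s,n)$. If there is such a pair, then $x\in X_S$ iff $s\in S$. If there is none, then $x\notin X_S$ for every $S$. So each coordinate of $X_S$ is either constant or depends on a single coordinate of $S$. Preimages of subbasic clopen sets are therefore clopen, and $S\mapsto X_S$ is continuous. This argument is in fact valid for arbitrary $S$, not only for trees.
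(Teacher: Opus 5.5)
Your proof is correct and follows the paper's argument essentially step by step. You use the same inclusion $S\subseteq T^\infty_{X_S}$ via $P_s\subseteq A_s$, the same three-way case split for $t\notin S$ according to how $s\concat n$ is comparable with $t$, and the same coordinatewise continuity argument. The one thing the paper includes that you skip is the one-line check that globally distinct points $p_{s,n}$ exist: choose them by recursion along an enumeration of the pairs $(s,n)$, which is possible because every cell is infinite.
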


\begin{proof}
The points exist: choose them by recursion along a fixed enumeration
of the pairs $(s,n)$, at each step picking a point of
$A_{s\concat n}$ different from the finitely many points chosen so
far (possible, as every cell is infinite).

Let $u\in\omega^{<\omega}$. If $u\in S$, then
$P_u\subseteq A_u$ is infinite and $P_u\subseteq X_S$, so
$u\in T^\infty_{X_S}$. If $u\notin S$, consider a point
$p_{s,n}\in X_S\cap A_u$ with $s\in S$; then $s\concat n$ and $u$ are
comparable. If $u\subseteq s\concat n$, then either $u\subseteq s$,
which is impossible ($S$ is downward closed and $u\notin S$), or
$u=s\concat n$, which happens for at most one pair $(s,n)$, namely
$s=u\restr(|u|-1)$, $n=u(|u|-1)$. If $s\concat n\subsetneq u$, then
$A_u\subseteq A_{s\concat n}$ receives at most the single point
$p_{s,n}$, and there are at most $|u|$ such pairs. Altogether
$X_S\cap A_u$ is finite and $u\notin T^\infty_{X_S}$.

Continuity: a fixed point of $\omega$ belongs to $X_S$ if and only if
it is some $p_{s,n}$ and $s\in S$, a condition on a single
coordinate of (the characteristic function of) $S$.
\end{proof}

The next lemma is the rigidity tool of the paper. Its second clause
--- finite support of the local successor permutations --- is what
makes the derived objects of Section~\ref{sec:DJ} invariant for an
\emph{arbitrary} auxiliary ideal $\mathcal J\supseteq\Fin$: a
permutation of $\omega$ with finite support changes every subset of
$\omega$ by a finite set only, and hence preserves every such
$\mathcal J$.

\begin{lemma}[Back-and-forth Lemma]\label{lem:backforth}
Let $B,B'\subseteq\omega^\omega$ be countable and such that every
cylinder meets each of them in an infinite set. Then there is a
bijection $h:B\to B'$ preserving $\Delta$:
\[
 \Delta(h(p),h(q))=\Delta(p,q)\qquad(p\neq q).
\]
Moreover, $h$ may be chosen so that the induced level-preserving
bijection $\tau$ of $\omega^{<\omega}$, defined by
$\tau(p\restr n)=h(p)\restr n$, has the following property: for every
node $t$, the \emph{successor permutation}
$\pi_t:\omega\to\omega$, determined by
$\tau(t\concat n)=\tau(t)\concat\pi_t(n)$, has finite support (indeed
support of size at most $2$).
\end{lemma}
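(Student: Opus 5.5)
We build $h$ by a back-and-forth recursion. Throughout, $h$ is determined by a level-preserving tree bijection $\tau:\omega^{<\omega}\to\omega^{<\omega}$, and we construct $\tau$ one level at a time through its successor permutations $\pi_t$.

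The key reduction is this. If $\tau$ is level-preserving and commutes with extension, then $\Delta$ is automatically preserved, since $\Delta(p,q)$ is the length of the longest common initial segment. So it suffices to construct the permutations $\pi_t$, each with support of size at most $2$, such that the induced homeomorphism $\hat\tau$ of $\omega^\omega$, given by $\hat\tau(p)(n)=\pi_{p\restr n}(p(n))$ composed along $p$, maps $B$ onto $B'$. Then $h=\hat\tau\restr B$.

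We enumerate $B=\{p_0,p_1,\dots\}$ and $B'=\{q_0,q_1,\dots\}$. At stage $k$ we have a finite partial bijection $h_k$ between finite subsets of $B$ and $B'$ that preserves $\Delta$. We also have finitely many permutations $\pi_t$ already fixed, namely for the nodes $t$ that are proper initial segments of the branches below some finite height. These fixed $\pi_t$ are consistent with $h_k$.

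At even stages we add the least unmatched $p_i$.

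1. Compute $m=\max_{p\in\mathrm{dom}(h_k)}\Delta(p_i,p)$, and let $p$ attain this maximum.
2. The image $h(p_i)$ must be a $q\in B'$ that extends $h(p)\restr m$ and branches off at exactly level $m$ in a direction corresponding to $p_i(m)$.
3. The node $\tau(p_i\restr (m+1))$ is a fresh successor of $\tau(p_i\restr m)$, that is, one not yet used by the finitely many already-matched branches.
4. Because every cylinder meets $B'$ in an infinite set, we can choose a $q\in B'$ through that fresh successor, and we can take it to be the least-indexed such $q$.

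Odd stages are symmetric, with the roles of $B$ and $B'$ exchanged.

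The main obstacle is the finite-support requirement. A naive choice of fresh successor would make $\pi_t$ an arbitrary bijection. The fix is to decide each $\pi_t$ only once, at the stage where $t$ first becomes relevant, and to set $\pi_t$ to be a transposition $(a\ b)$ or the identity. Concretely, whenever we need $\tau(t\concat a)=\tau(t)\concat b$ at a node $t$ where $\pi_t$ is still undecided, we set $\pi_t=(a\ b)$.

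We must then show that the constraints imposed on $\pi_t$ by later branches never conflict with this choice. Once $\pi_t$ is fixed, all later branches through $t$ are routed according to it; this forces their image nodes. Before the first branch fixes $\pi_t$, a branch through $t$ only constrains $\pi_t$ at its own single coordinate, so one transposition suffices.

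The remaining difficulty is surjectivity onto $B'$ when the $\pi_t$ are frozen this early. The image of a $p\in B$ is then completely determined as $\hat\tau(p)$. Hence we cannot choose $q$ freely, and we need $\hat\tau(B)=B'$ exactly.

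To handle this, we freeze only the permutations along finite initial segments, and we leave deeper $\pi_t$ undecided until needed. When adding $p_i$, we follow $\pi$ along $p_i$ while those permutations are already determined. At the first node $t\subseteq p_i$ with $\pi_t$ undecided, we apply the transposition sending $p_i(|t|)$ to a coordinate $c$ through which some not-yet-used $q\in B'$ passes. This $q$ is chosen with the least index among those compatible with the determined prefix. We then declare all $\pi$ along the rest of $p_i$ relative to $q$ as transpositions, each chosen at a node not yet touched.

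Two things must be checked:

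1. Along the remainder of $p_i$, the nodes are indeed untouched. This holds because below the branching point no previously added branch passes through them.
2. The symmetric odd steps can be handled in the same way. Since every cylinder of $B$ is infinite, we can always find a fresh $p\in B$ realizing a prescribed $q$ through the frozen prefix.

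With both checks in place, each $\pi_t$ is defined exactly once, as a transposition or the identity, so it has support of size at most $2$. Undecided nodes at the end are set to the identity.
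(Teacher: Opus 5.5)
Your proposal is correct and is essentially the paper's argument: freezing $\pi_t$ at the first visit of $t$ as the transposition $(a\,b)$ (from the first branch through $t$ and its image), routing every later branch through it, and choosing freely only on the fresh tail is exactly the paper's rule that, after the single block assignment $\pi_t(a)=b$, all controlled assignments at $t$ follow $(a\,b)\cup\mathrm{id}$. Your early phrase ``finitely many permutations already fixed'' should read ``the permutations at all prefixes of matched branches,'' which is what your final scheme and your check~1 actually use.
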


\begin{proof}
First note that $\tau$ is well defined for any $\Delta$-preserving
bijection $h$: if $p\restr n=q\restr n$ then $\Delta(p,q)\geq n$,
hence $\Delta(h(p),h(q))\geq n$ and $h(p)\restr n=h(q)\restr n$.
Since every node of $\omega^{<\omega}$ lies on a branch of $B$ and on
a branch of $B'$ (both sets meet every cylinder), $\tau$ is a
level-preserving bijection of $\omega^{<\omega}$ and each $\pi_t$ is
a permutation of $\omega$.

We construct $h$ by a back-and-forth recursion, as an increasing
union of finite partial $\Delta$-isometries, alternating between
extending the domain (forward steps, enumerating $B$) and the range
(backward steps, enumerating $B'$). Call a node \emph{treated} if it
is a prefix of a branch already in the domain of the current partial
map; the current partial map determines $\tau$ on treated nodes and
finitely many values of each $\pi_t$.

\emph{Base.} For the first branch $p_0\in B$ choose $h(p_0)\in B'$
arbitrarily. This determines $\tau$ along all prefixes of $p_0$ and,
at each node $t=p_0\restr j$, the single value
$\pi_t(p_0(j))=h(p_0)(j)$.

\emph{Forward step.} Let $p\in B$ be new, let
$m=\max\{\Delta(p,q):q\ \text{treated}\}$, realized by $q_*$, and let
$t=p\restr m$ (a treated node) and $n=p(m)$ (a fresh direction at
$t$: no treated branch passes through $t\concat n$). Any choice of a
new point
$h(p)\in B'\cap[\tau(t)\concat k]$ with $k$ different from the
finitely many directions already used at $\tau(t)$ extends the
$\Delta$-isometry: for treated $q$ with $\Delta(p,q)=m$ we get
$\Delta(h(p),h(q))=m$ because $k\neq\pi_t(q(m))$, and for
$\Delta(p,q)=j<m$ the ultrametric inequality gives
$\Delta(h(p),h(q))=\Delta(h(q_*),h(q))=j$, since
$h(p)\supseteq\tau(t)=h(q_*)\restr m$. Such a point exists because
$B'\cap[\tau(t)\concat k]$ is infinite and only finitely many points
are in use. This step assigns $\pi_t(n)=k$ (a \emph{controlled}
assignment, $k$ of our choosing among the fresh directions at
$\tau(t)$), and, at each node $u=p\restr j$ with $j>m$ (all fresh),
the \emph{block} assignments $\tau(u)=h(p)\restr j$ and
$\pi_{p\restr j}(p(j))=h(p)(j)$ dictated by the chosen point $h(p)$.

\emph{Backward step.} Symmetrically, for a new $p'\in B'$, with
separation node $\tau(t)$ and fresh direction $k$ there, we choose a
fresh direction $n$ at $t$ and a new point of $B\cap[t\concat n]$ to
map to $p'$; the assignment $\pi_t(n)=k$ is again controlled, and the
nodes of $p'$ beyond the separation level receive block assignments.

The union of the steps is a $\Delta$-preserving bijection $h:B\to B'$,
and every direction at every node is eventually treated, so each
$\pi_t$ is a total permutation. It remains to steer the controlled
assignments so that each $\pi_t$ has support at most $2$. Observe
that each node $t$ receives \emph{at most one} block assignment, at
its first visit: block assignments happen only at fresh nodes, which
are treated thereafter. Denote it $\pi_t(a)=b$ (at nodes first
visited by a controlled assignment there is no block pair, and the
argument below is simpler). We commit to the following rule for
controlled assignments at $t$: given a fresh direction $n$ (forward
case), set $\pi_t(n)=n$ if $n\notin\{a,b\}$, and $\pi_t(b)=a$ if
$n=b$. The remaining case $n=a$ cannot occur: the branch that created
the block assignment already passes through $t\concat a$, so this
direction is not fresh. We use the symmetric rule in the backward
case. The rule is always
available: by induction, at every stage the partial permutation
$\pi_t$ is contained in the union of the transposition $(a\,b)$ and
the identity, so for $n\notin\{a,b\}$ the value $n$ is not yet an
image (its only possible preimage under the rule is $n$ itself, and
$t\concat n$ was fresh), and for $n=b$ the value $a$ is not yet an
image (its only possible preimages are $b$, which was fresh, and $a$,
whose image is $b\ne a$); freshness on the $B'$ side is checked in
the same way. Hence every $\pi_t$ is contained in
$(a\,b)\cup\mathrm{id}$ and has support at most $2$.
\end{proof}

A $\Delta$-preserving bijection maps ultrametric balls onto balls,
that is, sets $B\cap[s]$ onto sets $B'\cap[t]$ with $|t|=|s|$; it
therefore respects the entire cell structure level by level.

\begin{proposition}[presentation independence]\label{prop:independence}
If $\langle A_s\rangle$ and $\langle A'_s\rangle$ are tree partitions
of $\omega$, there is a bijection $\sigma$ of $\omega$ carrying each
cell $A_s$ onto a cell $A'_t$ with $|t|=|s|$ and respecting the tree
structure; moreover $\sigma$ can be chosen so that the induced tree
isomorphism has all successor permutations of finite support.
Consequently, all ideals defined in this paper from the partition
(the hierarchies $\HH^{\mathcal J}_\alpha$ for every ideal
$\mathcal J\supseteq\Fin$, the $\GG_k$, $\Sc$, $\PC$, the $\ED_m$)
and the quotient $\Pow(\omega)/\HH$ do not depend on the chosen
partition, up to an isomorphism induced by a bijection of $\omega$.
\end{proposition}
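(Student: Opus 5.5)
The plan is to obtain $\sigma$ directly from the Back-and-forth Lemma~\ref{lem:backforth}, applied to the two branch sets, and then to read off the invariance statements from the fact that every ideal in the list is defined from the partition through isomorphism-invariant data.

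\textbf{Step 1: construct $\sigma$.} Let $B=\{b_x:x\in\omega\}$ and $B'=\{b'_x:x\in\omega\}$ be the branch sets attached to $\langle A_s\rangle$ and $\langle A'_s\rangle$. Both are countable, and by the remark after Definition~\ref{def:partition} each meets every cylinder in an infinite set. Lemma~\ref{lem:backforth} then supplies a $\Delta$-preserving bijection $h:B\to B'$ whose induced tree map $\tau$ has all successor permutations $\pi_t$ of support at most $2$. Since $x\mapsto b_x$ and $x\mapsto b'_x$ are injective with images $B$ and $B'$, we may define
\[
 \sigma(x)=\text{the unique } y \text{ with } b'_y=h(b_x).
\]
This is a bijection of $\omega$. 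We then check that $x\in A_s$ iff $s\subseteq b_x$, iff $\tau(s)\subseteq h(b_x)=b'_{\sigma(x)}$, iff $\sigma(x)\in A'_{\tau(s)}$. Hence $\sigma[A_s]=A'_{\tau(s)}$ with $|\tau(s)|=|s|$. Because $\tau$ is a level-preserving tree isomorphism, it respects the tree structure, and the finite-support clause is inherited verbatim from the lemma.

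\textbf{Step 2: invariance of the derived objects.} Each object in the list is defined from the partition by some recipe $\Phi(\langle A_s\rangle)$, built from the sets $A_s$, the branch map, $\Delta$, the trace trees $T_X$ and $T^\infty_X$, and the auxiliary ideal $\mathcal J$. The claim to prove is
\[
 \Phi(\langle A'_s\rangle)=\sigma[\Phi(\langle A_s\rangle)].
\]
Transport of structure handles everything except $\mathcal J$. We have $T_{\sigma[X]}=\tau[T_X]$ and $T^\infty_{\sigma[X]}=\tau[T^\infty_X]$, and $\Delta(\sigma x,\sigma y)=\Delta(x,y)$. So the $\GG_k$, which are defined through $\Delta \bmod k$, and likewise $\Sc$, $\PC$ and the $\ED_m$, which are defined by cells and levels, transfer at once.

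The point needing care is the derivative $D_{\mathcal J}$. It presumably looks at sets of successor indices $\{n: t\concat n\in T\}\subseteq\omega$ and asks whether they are $\mathcal J$-positive. Under $\tau$ such a set is replaced by its image under $\pi_t$. Since $\pi_t$ has finite support and $\mathcal J\supseteq\Fin$, the image differs from the original by a finite set, so $\mathcal J$-membership is preserved. This is exactly why the second clause of Lemma~\ref{lem:backforth} is needed: for an arbitrary $\mathcal J$ (not permutation-invariant), a general tree isomorphism would not suffice.

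\textbf{Step 3: the hierarchy and the quotient.} By transfinite induction on $\alpha$, $D^\alpha_{\mathcal J}$ commutes with $\tau$ at successor steps (by Step 2) and at limit steps (intersections commute with the bijection). Therefore $\HH^{\mathcal J}_\alpha$ for the primed partition equals $\sigma[\HH^{\mathcal J}_\alpha]$. Finally, $\sigma$ induces an isomorphism $\Pow(\omega)/\HH\to\Pow(\omega)/\HH'$ via $[X]\mapsto[\sigma[X]]$.

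The main obstacle is Step 2 for $D_{\mathcal J}$. Since the definition of the derivative appears only later, the plan is to verify there, once, the single equivariance identity $D_{\mathcal J}(\tau[T])=\tau[D_{\mathcal J}(T)]$, reducing it to the finite-support observation above.
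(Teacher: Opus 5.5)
Your proposal is correct and follows the paper's proof essentially step for step. You define $\sigma$ by $b'_{\sigma(x)}=h(b_x)$ from the Back-and-forth Lemma, transport cells, $\Delta$ and trace trees via $\tau$, and handle the derived hierarchies through $\suc_{\tau[S]}(\tau(t))=\pi_t[\suc_S(t)]$, the finite support of $\pi_t$ with $\mathcal J\supseteq\Fin$, and induction on $\alpha$; the root is fixed since $\tau(\varnothing)=\varnothing$, and invariance of $\HH$ then gives the quotient isomorphism.
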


\begin{proof}
Apply Lemma~\ref{lem:backforth} to the branch sets $B,B'$ of the two
partitions, and let $\sigma:\omega\to\omega$ be the point map
determined by $b'_{\sigma(x)}=h(b_x)$. Then
$\sigma[A_s]=A'_{\tau(s)}$ for every $s$: indeed $x\in A_s$ iff
$b_x\supseteq s$ iff $h(b_x)\supseteq\tau(s)$ iff
$\sigma(x)\in A'_{\tau(s)}$. Hence
$T'^\infty_{\sigma[X]}=\tau[T^\infty_X]$ and
$\Delta'(\sigma(x),\sigma(y))=\Delta(x,y)$ for all
$X\subseteq\omega$ and $x\neq y$.

Invariance now follows object by object. The chromatic ideals
$\GG_k$, defined from $\Delta$ alone, are transported by $\sigma$
directly. The ideals $\Sc$ and $\PC$ and the $\ED_m$ depend only on
the trace trees, the levels, and the cardinalities of successor sets,
all preserved by the level-preserving tree isomorphism $\tau$. For
the derived hierarchies, note that for every tree
$S\subseteq\omega^{<\omega}$ and node $t\in S$,
\[
 \suc_{\tau[S]}(\tau(t))=\pi_t[\suc_S(t)] .
\]
Since each $\pi_t$ has finite support, $\pi_t[E]$ differs from $E$ by
a finite set for every $E\subseteq\omega$; as
$\mathcal J\supseteq\Fin$, we get
$\pi_t[E]\in\mathcal J\iff E\in\mathcal J$. By induction on $\alpha$
(intersections at limits commute with images of bijections) this
yields
$D^\alpha_{\mathcal J}(\tau[S])=\tau[D^\alpha_{\mathcal J}(S)]$, and
since $\tau(\varnothing)=\varnothing$,
\[
 X\in\HH^{\mathcal J}_\alpha
 \iff
 \sigma[X]\in\HH'^{\mathcal J}_\alpha
\]
for every $\alpha<\omega_1$. The invariance of $\HH$ (the case
$\mathcal J=\Fin$, all $\alpha$ at once) gives the invariance of the
quotient $\Pow(\omega)/\HH$.
\end{proof}

\begin{corollary}[local homogeneity]\label{cor:homogeneity}
For every $s$, the localized partition
$\langle A_t:t\supseteq s\rangle$ of $A_s$ is a tree partition of the
countable set $A_s$ (reindex the cone by $u\mapsto A_{s\concat u}$);
hence every localized structure is isomorphic to the global one. In
particular $\HH(s)\cong\HH$ and $\mathbb B_s\cong\mathbb B$ (notation
of Section~\ref{sec:quotient}).
\end{corollary}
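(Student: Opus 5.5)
The plan is to check directly that the localized family satisfies the clauses of Definition~\ref{def:partition}, with $A_s$ in the role of $\omega$, and then transport along a bijection so that Proposition~\ref{prop:independence} applies.

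Fix $s$ and set $A'_u=A_{s\concat u}$ for $u\in\omega^{<\omega}$. The clauses are checked one at a time.
\begin{itemize}
\item $A'_\varnothing=A_s$ holds by definition.
\item Each $A'_u$ is infinite because every cell $A_t$ is.
\item We have $\{A'_{u\concat n}:n\in\omega\}=\{A_{(s\concat u)\concat n}:n\in\omega\}$, which is a partition of $A_{s\concat u}=A'_u$.
\item Separation: if $x\neq y\in A_s$, they are separated in the original partition at some level $k$. Since both lie in $A_{s}$, they agree up to level $|s|$, so $k>|s|$. They are therefore separated at level $k-|s|$ of the localized family.
\end{itemize}
Hence $\langle A'_u\rangle$ is a tree partition of the countable infinite set $A_s$. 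Its branch map is $x\mapsto$ the tail of $b_x$ beyond $|s|$, so $\Delta'(x,y)=\Delta(x,y)-|s|$.

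Next, fix any bijection $e:A_s\to\omega$. Then $\langle e[A'_u]\rangle$ is a tree partition of $\omega$ in the sense of Definition~\ref{def:partition}. Proposition~\ref{prop:independence} gives a bijection $\sigma$ of $\omega$ carrying $\langle e[A'_u]\rangle$ onto $\langle A_u\rangle$ level by level. Its successor permutations have finite support, so $\sigma$ transports each derived ideal. Composing, $\sigma\circ e$ is a bijection $A_s\to\omega$ carrying the localized structure onto the global one.

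It remains to identify the localized objects with those built from $\langle A'_u\rangle$. For $\HH(s)$, I take it to be the ideal on $A_s$ defined by the hierarchy from the localized partition, or equivalently $\{X\subseteq A_s: X\in\HH\}$ computed with respect to the cone. For $X\subseteq A_s$, the trace tree $T^\infty_X$ consists of the initial segments of $s$ together with the cone above $s$ shifted by $s$. In the derivative $D_{\mathcal J}$, nodes below $s$ have a single successor, a finite set lying in $\mathcal J$. So one must check that the rank computations agree, presumably as in the Fubini recursion of Section~\ref{sec:DJ}.

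The claim $\mathbb B_s\cong\mathbb B$ then follows, since $\sigma\circ e$ induces an isomorphism $\Pow(A_s)/\HH(s)\cong\Pow(\omega)/\HH$.

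The main obstacle is this last identification, which depends on the precise definitions of $\HH(s)$ and $\mathbb B_s$ in later sections. If these are defined intrinsically from the localized partition, the claim is immediate. If they are defined as restrictions of global objects, one needs the stated compatibility of $D_{\mathcal J}$ with passing to a cone, which I would prove by induction on $\alpha$ as in the proof of Proposition~\ref{prop:independence}.
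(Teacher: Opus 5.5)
Your proposal is correct and follows the paper's proof: check that $\langle A_{s\concat u}\rangle$ satisfies Definition~\ref{def:partition} on $A_s$, transport it along a bijection $A_s\to\omega$, and invoke Proposition~\ref{prop:independence}. The paper defines $\HH(s)$ and $\mathbb B_s$ intrinsically from the localized partition, so your ``main obstacle'' does not arise; but note that your side remark describing $\HH(s)$ ``equivalently'' as $\{X\subseteq A_s:X\in\HH\}$ is false, since for $s\neq\varnothing$ that family is all of $\Pow(A_s)$ (the root of $T^\infty_X$ has at most one child), and the correct link to the global trace tree, via Lemma~\ref{lem:persistence}(b), is that $X\in\HH^{\mathcal J}_\alpha(s)$ iff the node $s$, not the root, is missing from $D^\alpha_{\mathcal J}(T^\infty_X)$.
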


\begin{proof}
The reindexed family $\langle A_{s\concat u}:u\in\omega^{<\omega}
\rangle$ satisfies Definition~\ref{def:partition} over the countable
set $A_s$; transporting along any bijection $A_s\to\omega$ and
applying Proposition~\ref{prop:independence} identifies the localized
objects with the global ones.
\end{proof}

\subsection{The original tree ideals}

We recall the objects of \cite{Pelayo}, with the terminology of that
paper. A set $M\subseteq\omega$ is \emph{large} if it is infinite and
every node of $T^\infty_M$ has infinitely many children in
$T^\infty_M$; $X$ is \emph{dominating} if it contains a large set,
and $\HH$ is the ideal of non-dominating sets. A \emph{selector} is a
set contained in some $A_s$ with exactly one point in each
$A_{s\concat n}$; the ideal $\ED_m$ is generated by
$\{A_t:t\in\omega^{m+1}\}$ together with the selectors. The ideal
$\PC$ is generated by the selectors and the sets $X$ whose tree $T_X$
has slowly growing branching:
$|\suc_{T_X}(s)|\leq|s|+1$ for every $s\in T_X$.

\subsection{Kat\v etov orders and known facts}

For ideals $\mathcal I,\mathcal J$ on countable sets:
$\mathcal I\leq_K\mathcal J$ if there is
$f:\operatorname{dom}(\mathcal J)\to\operatorname{dom}(\mathcal I)$
with $A\in\mathcal I\Rightarrow f^{-1}[A]\in\mathcal J$; if $f$ can be
taken finite-to-one we write $\leq_{KB}$; $\sqsubseteq$ denotes a
bijective witness (an isomorphic copy). All ideals are assumed proper
and containing $\Fin$. An ideal $\mathcal I$ has property
$\mathrm{Kat}$ if $\mathcal I\leq_K\mathcal J$ implies
$\mathcal I\sqsubseteq\mathcal J$ for every $\mathcal J$.

For subsets $A\subseteq2^X$ and $C\subseteq2^Y$ of Cantor cubes we
write $A\leq_W C$ if there is a continuous map $F:2^X\to2^Y$ such
that $x\in A\iff F(x)\in C$.

Known facts and background used without proof:
\begin{enumerate}[label=(K\arabic*),leftmargin=3em]
 \item\label{K:rank} $\rk(\Fin^{\otimes r})=r$, where $\rk$ is the
 Borel separation rank, monotone under $\sqsubseteq$
 \cite{DebsSaintRaymond}.
 \item\label{K:kat} Each $\Fin^{\otimes r}$ has property
 $\mathrm{Kat}$ \cite[Example~4.1]{Barbarski}.
 \item\label{K:katFw} $\Fw$ has property $\mathrm{Kat}$
 \cite[Proposition~5.1]{Barbarski}.
 \item\label{K:kwela} There is an ideal $\Fwp$ of rank $\omega$ such
 that $\Fwp\leq_K\mathcal I\Leftrightarrow
 \forall r\,(\Fin^{\otimes r}\leq_K\mathcal I)$, where $\leq_K$ may be
 replaced by $\sqsubseteq$; $\Fwp$ and the canonical inductive limit
 $\Fw$ are not isomorphic, and indeed $\Fw$ has no isomorphic copy
 inside $\Fwp$ \cite[Theorems~4.6 and~5.1]{Kwela}.
 \item The Kat\v etov interval between $\ED$ and
 $\Fin\otimes\Fin$ contains copies of $\Pow(\omega)/\Fin$
 \cite{DasEtAl}.
 \item\label{K:colors} If $c$ is a coloring of pairs into finitely
 many colors, the ideal generated by its homogeneous sets, when proper, is $F_\sigma$ and
 tall; general theory in \cite{HrusakEtAl,CDU,LopezAbad}.
 \item Trace ideals of $\sigma$-ideals on Polish
 spaces and their quotients are developed in \cite{HrusakZapletal}.
 \item The transfinite hierarchy
 $\langle\mathrm{conv}_\alpha:\alpha<\omega_1\rangle$, critical for
 the countable compacta and strictly decreasing in the Kat\v etov
 order, is developed in \cite{FKK,FKK2}.
\end{enumerate}

\begin{remark}[scope relative to Kwela's conjecture]\label{rem:kwelascope}
Kwela's Conjecture~5.3 asks, for every nonzero countable limit ordinal
$\alpha$, for a rank-$\alpha$ ideal universal for the preceding
successor ideals, both in the Kat\v etov order and by isomorphic copies.
The case $\alpha=\omega$ is already established in \cite{Kwela} by the
ideal $\Fwp$. Thus the comparison of $\Fw$, $\Fwp$, and $\Hw$ in this
paper does not settle a new case of that conjecture. Rather, it studies
the non-uniqueness and coherence obstructions among three natural
amalgamations at the first limit level. The first ordinal not covered by
Kwela's theorem is $\omega\cdot2$.
\end{remark}

\section{The derivative parametrized by an auxiliary ideal}
\label{sec:DJ}

Let $\mathcal J\supseteq\Fin$ be a proper ideal on $\omega$. For
$T\subseteq\omega^{<\omega}$ and $s\in\omega^{<\omega}$ let
$\suc_T(s)=\{n:s\concat n\in T\}$.

\begin{definition}\label{def:derivative}
$D_{\mathcal J}(T)=\{s\in T:\suc_T(s)\notin\mathcal J\}$, iterated as
$D^{\alpha}_{\mathcal J}$ (intersections at limits). For
$\alpha<\omega_1$,
\[
 \HH^{\mathcal J}_\alpha=\{X:\varnothing\notin
   D^\alpha_{\mathcal J}(T^\infty_X)\},\qquad
 \HH^{\mathcal J}=\bigcup_{\alpha<\omega_1}\HH^{\mathcal J}_\alpha.
\]
A tree $L\ni\varnothing$ is \emph{$\mathcal J$-positive Laver} if
$\suc_L(s)\notin\mathcal J$ for all $s\in L$. We write
$\HH_\alpha=\HH^{\Fin}_\alpha$ and $\HH=\HH^{\Fin}$; by
Theorem~\ref{thm:nucleus} and Lemma~\ref{lem:skeleton}, this agrees
with the objects of \cite{Pelayo}.
\end{definition}

Iterated derivatives of a tree need not be downward closed, so we
record three elementary facts that will be used repeatedly.

\begin{lemma}\label{lem:persistence}
Let $S,T\subseteq\omega^{<\omega}$ and $\alpha<\omega_1$.
\begin{enumerate}[label=(\alph*)]
 \item (monotonicity) If $S\subseteq T$ then
 $D^\alpha_{\mathcal J}(S)\subseteq D^\alpha_{\mathcal J}(T)$.
 \item (locality) For $t\in\omega^{<\omega}$, whether
 $t\in D^\alpha_{\mathcal J}(S)$ depends only on
 $S\cap\{u:u\supseteq t\}$.
 \item (persistence) If $S$ is a tree and
 $\suc_{D^\alpha_{\mathcal J}(S)}(s)\notin\mathcal J$, then
 $s\in D^\alpha_{\mathcal J}(S)$.
\end{enumerate}
\end{lemma}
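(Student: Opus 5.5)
All three clauses will be proved by transfinite induction on $\alpha$. The induction uses only the recursive definition $D^{\alpha+1}_{\mathcal J}(T)=D_{\mathcal J}(D^\alpha_{\mathcal J}(T))$, $D^0_{\mathcal J}(T)=T$, and $D^\lambda_{\mathcal J}(T)=\bigcap_{\beta<\lambda}D^\beta_{\mathcal J}(T)$ at limits. Throughout we use that $D_{\mathcal J}(T)\subseteq T$, so the sequence $D^\alpha_{\mathcal J}(T)$ is decreasing in $\alpha$.

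For \emph{monotonicity} (a), we first check the single-step case. If $S\subseteq T$, then $\suc_S(s)\subseteq\suc_T(s)$ for every $s$. Since $\mathcal J$ is closed under subsets, $\suc_S(s)\notin\mathcal J$ forces $\suc_T(s)\notin\mathcal J$, and so $D_{\mathcal J}(S)\subseteq D_{\mathcal J}(T)$. Applying this at each successor step gives the inclusion at every successor stage. At limit stages, intersections of pointwise smaller families are smaller, so the inclusion passes to the limit.

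For \emph{locality} (b), we prove the stronger statement that for every $\alpha$ and every $t$, the set $D^\alpha_{\mathcal J}(S)\cap\{u:u\supseteq t\}$ depends only on $S\cap\{u:u\supseteq t\}$. The stronger form is needed for the induction to go through.
\begin{itemize}
 \item At the base stage $\alpha=0$ this is trivial.
 \item At a successor stage, let $u\supseteq t$. Membership $u\in D^{\alpha+1}_{\mathcal J}(S)$ is decided by two things: whether $u\in D^\alpha_{\mathcal J}(S)$, and whether $\suc_{D^\alpha_{\mathcal J}(S)}(u)\notin\mathcal J$. The second depends only on the nodes $u\concat n\supseteq t$ of $D^\alpha_{\mathcal J}(S)$. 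By the induction hypothesis, all of this is determined by $S$ above $t$.
 \item Limit stages are immediate, since intersections of locally determined sets are locally determined.
\end{itemize}

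For \emph{persistence} (c), the only subtlety is that $D^\alpha_{\mathcal J}(S)$ need not be a tree. We split on the form of $\alpha$.
\begin{itemize}
 \item For $\alpha=0$, the tree $S$ is downward closed. Since $\mathcal J$ is proper, $\suc_S(s)\notin\mathcal J$ makes $\suc_S(s)$ nonempty, so some $s\concat n\in S$ and hence $s\in S$.
 \item For $\alpha=\beta+1$, suppose $\suc_{D^{\beta+1}_{\mathcal J}(S)}(s)\notin\mathcal J$. Because $D^{\beta+1}_{\mathcal J}(S)\subseteq D^\beta_{\mathcal J}(S)$, the set $\suc_{D^\beta_{\mathcal J}(S)}(s)$ contains it and is therefore also $\mathcal J$-positive. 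The induction hypothesis gives $s\in D^\beta_{\mathcal J}(S)$, and together with that positive successor set this is exactly the condition for $s\in D_{\mathcal J}(D^\beta_{\mathcal J}(S))=D^{\beta+1}_{\mathcal J}(S)$.
 \item For limit $\lambda$, the same containment argument shows $\suc_{D^\beta_{\mathcal J}(S)}(s)\notin\mathcal J$ for every $\beta<\lambda$. The induction hypothesis then puts $s$ in every $D^\beta_{\mathcal J}(S)$, hence in their intersection.
\end{itemize}

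No step is a real obstacle. The points needing care are phrasing locality in its strengthened "above $t$" form so the induction closes, and remembering that persistence uses both properness of $\mathcal J$ and downward closure of $S$ only at the base stage.
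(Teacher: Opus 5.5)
Your proposal is correct and follows the paper's proof essentially verbatim. Clauses (a) and (b) go by induction on $\alpha$; your ``above $t$'' strengthening of (b) is the natural way to make the paper's ``immediate induction'' precise. Clause (c) uses the same three-case induction, driven by $D^{\beta+1}_{\mathcal J}(S)\subseteq D^\beta_{\mathcal J}(S)$.

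One small slip: in the base case of (c), $\suc_S(s)\neq\varnothing$ follows from $\varnothing\in\mathcal J$, which holds for every ideal, not from properness of $\mathcal J$. So properness is not actually used anywhere in this lemma.
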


\begin{proof}
(a) and (b) are immediate inductions on $\alpha$: the successor step
holds because $\suc$ is monotone and looks only above $t$, and limits
are intersections.

(c) By induction on $\alpha$. For $\alpha=0$: the hypothesis gives
some $s\concat n\in S$, and $S$ is a tree, so $s\in S$. Successor:
since
$D^{\alpha+1}_{\mathcal J}(S)\subseteq D^\alpha_{\mathcal J}(S)$, the
hypothesis gives
$\suc_{D^\alpha_{\mathcal J}(S)}(s)\supseteq
\suc_{D^{\alpha+1}_{\mathcal J}(S)}(s)\notin\mathcal J$, so
$s\in D^\alpha_{\mathcal J}(S)$ by the induction hypothesis, and then
$s\in D_{\mathcal J}(D^\alpha_{\mathcal J}(S))
=D^{\alpha+1}_{\mathcal J}(S)$ by definition. Limit $\lambda$: for
each $\beta<\lambda$,
$\suc_{D^\beta_{\mathcal J}(S)}(s)\supseteq
\suc_{D^\lambda_{\mathcal J}(S)}(s)\notin\mathcal J$, so
$s\in D^\beta_{\mathcal J}(S)$ for all $\beta<\lambda$, i.e.\
$s\in D^\lambda_{\mathcal J}(S)$.
\end{proof}

\begin{theorem}[nucleus]\label{thm:nucleus}
The following are equivalent: (1) $X\notin\HH^{\mathcal J}$; (2)
$T^\infty_X$ contains a $\mathcal J$-positive Laver tree; (3) the root
survives all countable derivatives.
\end{theorem}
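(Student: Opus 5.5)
The plan is to prove the cycle (1)$\Leftrightarrow$(3) by definition and (2)$\Leftrightarrow$(3) by a stabilization argument for the derivative. Throughout put $T=T^\infty_X$, which is a tree: if $|X\cap A_{s}|=\omega$ then $|X\cap A_{s\restr k}|=\omega$ for every $k$.

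\textbf{(1)$\Leftrightarrow$(3).} By definition, $X\notin\HH^{\mathcal J}$ means $X\notin\HH^{\mathcal J}_\alpha$ for every $\alpha<\omega_1$. This says exactly that $\varnothing\in D^\alpha_{\mathcal J}(T)$ for all $\alpha<\omega_1$, and "all countable derivatives" is read as all $\alpha<\omega_1$.

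\textbf{Stabilization.} The sequence $D^\alpha_{\mathcal J}(T)$ is $\subseteq$-decreasing in $\alpha$: $D_{\mathcal J}(S)\subseteq S$ always, and limits are intersections. Since $T$ is a subset of the countable set $\omega^{<\omega}$, a strictly decreasing sequence can have only countably many steps. Hence there is $\gamma<\omega_1$ with $D^{\gamma+1}_{\mathcal J}(T)=D^\gamma_{\mathcal J}(T)=:K$, and then $D^\alpha_{\mathcal J}(T)=K$ for every $\alpha\geq\gamma$. The fixed-point equation $D_{\mathcal J}(K)=K$ says precisely that $\suc_K(s)\notin\mathcal J$ for every $s\in K$.

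\textbf{(3)$\Rightarrow$(2).} If the root survives, then $\varnothing\in K$. The set $K$ need not be downward closed, but we only need the nodes reachable from the root. Let $L$ be the set of $s$ such that $s\restr k\in K$ for all $k\leq|s|$. This $L$ is a tree containing $\varnothing$, and $L\subseteq K\subseteq T$. For $s\in L$ we have $\suc_L(s)=\suc_K(s)\notin\mathcal J$, because every child $s\concat n$ in $K$ has all its proper initial segments in $L$. So $L$ is a $\mathcal J$-positive Laver tree inside $T$.

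\textbf{(2)$\Rightarrow$(3).} Let $L\subseteq T$ be $\mathcal J$-positive Laver. By induction on $\alpha$ we show $L\subseteq D^\alpha_{\mathcal J}(T)$.
\begin{itemize}
 \item The case $\alpha=0$ is the hypothesis $L\subseteq T$.
 \item At a successor, let $s\in L$. By the induction hypothesis $\suc_{D^\alpha_{\mathcal J}(T)}(s)\supseteq\suc_L(s)\notin\mathcal J$. Since ideals are closed downward, this set is also $\mathcal J$-positive. So $s\in D^{\alpha+1}_{\mathcal J}(T)$, using $s\in D^\alpha_{\mathcal J}(T)$. Equivalently one can cite Lemma~\ref{lem:persistence}(a) together with the fact that $D_{\mathcal J}(L)=L$.
 \item Limits are immediate, since the derivative there is an intersection.
\end{itemize}
In particular $\varnothing\in L$ survives every derivative.

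\textbf{Main obstacle.} The one step needing care is the stabilization below $\omega_1$, which is where countability of $\omega^{<\omega}$ enters. The alternative reading of (3), as "the root survives at every $\alpha<\omega_1$", then also covers every ordinal, since the sequence is constant from $\gamma$ on. The other point to handle is that $K$ need not be a tree. This is dealt with by passing to its downward-closed part $L$, which keeps the successor sets unchanged.
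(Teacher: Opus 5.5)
Your proof is correct and follows essentially the same route as the paper. You stabilize the decreasing derivative sequence below $\omega_1$ and take the rooted component of the fixed point $K$ for (3)$\Rightarrow$(2). For (2)$\Rightarrow$(3) you show $L\subseteq D^\alpha_{\mathcal J}(T^\infty_X)$, by a direct induction where the paper cites $D_{\mathcal J}(L)=L$ together with monotonicity (Lemma~\ref{lem:persistence}(a)).
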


\begin{proof}
The derivatives form a decreasing sequence of subsets of the
countable set $\omega^{<\omega}$, so there is $\alpha_*<\omega_1$
with
$D^{\alpha_*}_{\mathcal J}(T^\infty_X)
=D^{\alpha_*+1}_{\mathcal J}(T^\infty_X)=:K$, and then
$D^\beta_{\mathcal J}(T^\infty_X)=K$ for all $\beta\geq\alpha_*$.
(1)$\iff$(3) is immediate from this stabilization. Suppose that $\varnothing\in K$, and take the rooted component
\[
 L=\{s\in K:\text{ every initial segment of }s\text{ belongs to }K\}.
\]
Then $L$ is a tree containing the root. If $s\in L$ and
$n\in\suc_K(s)$, every initial segment of $s\concat n$ belongs to
$K$, so $s\concat n\in L$. Hence
$\suc_L(s)=\suc_K(s)\notin\mathcal J$ for every $s\in L$, because
$D_{\mathcal J}(K)=K$. Thus $L$ is a $\mathcal J$-positive Laver tree
contained in $T^\infty_X$; this gives (3)$\Rightarrow$(2).
Conversely, a $\mathcal J$-positive Laver tree $L$ satisfies
$D_{\mathcal J}(L)=L$, hence, by Lemma~\ref{lem:persistence}(a) and
induction on $\alpha$,
$L=D^\alpha_{\mathcal J}(L)\subseteq
D^\alpha_{\mathcal J}(T^\infty_X)$ for all $\alpha$, so the root
survives: (2)$\Rightarrow$(3).
\end{proof}

\begin{theorem}[strict hierarchy]\label{thm:hierarchy}
Each $\HH^{\mathcal J}_\alpha$ is a proper ideal, tall for
$\alpha\geq1$, and
$\HH^{\mathcal J}_\alpha\subsetneq\HH^{\mathcal J}_{\alpha+1}$ for all
$\alpha<\omega_1$.
\end{theorem}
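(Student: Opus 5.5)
The plan is to derive all four properties (ideal, proper, tall, strict) from Lemma~\ref{lem:persistence}, Theorem~\ref{thm:nucleus} and the Realization Lemma~\ref{lem:realization}.

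\emph{Ideal and properness.} Downward closure is immediate. If $Y\subseteq X$ then $T^\infty_Y\subseteq T^\infty_X$, and monotonicity (Lemma~\ref{lem:persistence}(a)) gives $D^\alpha_{\mathcal J}(T^\infty_Y)\subseteq D^\alpha_{\mathcal J}(T^\infty_X)$.

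For finite unions, note first that $T^\infty_{X\cup Y}=T^\infty_X\cup T^\infty_Y$. I will then prove, by induction on $\alpha$, that for trees $S,T$
\[
 D^\alpha_{\mathcal J}(S\cup T)\subseteq D^\alpha_{\mathcal J}(S)\cup D^\alpha_{\mathcal J}(T).
\]
\begin{itemize}
 \item Successor step: let $s\in D^{\alpha+1}_{\mathcal J}(S\cup T)$. Then $\suc_{D^\alpha(S\cup T)}(s)\notin\mathcal J$, and by induction this set is covered by $\suc_{D^\alpha(S)}(s)\cup\suc_{D^\alpha(T)}(s)$. Since $\mathcal J$ is an ideal, one of the two pieces is $\mathcal J$-positive. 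Persistence (Lemma~\ref{lem:persistence}(c)) then puts $s$ in $D^\alpha$ of that tree, and hence, by definition, in $D^{\alpha+1}$ of it.
 \item Limit step: the sequences $D^\beta(S)$ and $D^\beta(T)$ are decreasing. If $s$ lies in the union for every $\beta<\lambda$, it lies cofinally often in one of them, hence in all of its stages below $\lambda$.
\end{itemize}
Taking $s=\varnothing$ shows $\HH^{\mathcal J}_\alpha$ is closed under finite unions. Finite sets have $T^\infty_X=\varnothing$, so $\Fin\subseteq\HH^{\mathcal J}_\alpha$. Properness holds because $T^\infty_\omega=\omega^{<\omega}$ is a $\mathcal J$-positive Laver tree (cofinite sets are not in the proper ideal $\mathcal J\supseteq\Fin$). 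By the argument of Theorem~\ref{thm:nucleus}, the root of $T^\infty_\omega$ survives every $D^\alpha_{\mathcal J}$.

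\emph{Tallness for $\alpha\ge1$.} Given an infinite $X$, I will find an infinite $Y\subseteq X$ whose $T^\infty_Y$ has all successor sets of size at most $1$. Such sets are finite, hence lie in $\mathcal J$, so $D_{\mathcal J}(T^\infty_Y)=\varnothing$ and $Y\in\HH^{\mathcal J}_1\subseteq\HH^{\mathcal J}_\alpha$. The construction is a dichotomy.
\begin{itemize}
 \item Fan case: some $s$ has $X\cap A_{s\concat n}\ne\varnothing$ for infinitely many $n$. Pick one point of $X$ in each such cell. Then $T^\infty_Y$ is the set of initial segments of $s$.
 \item Branch case: otherwise every node of $T_X$ is finitely branching in $T_X$. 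By K\"onig's lemma (there are infinitely many points, and cells separate them), there is a branch $b$ with $X\cap A_{b\restr n}$ infinite for all $n$. Choose points $x_n\in X\cap(A_{b\restr n}\setminus A_{b\restr(n+1)})$ for infinitely many $n$. This is possible since points are separated and the finitely many children off $b$ at each level absorb what is needed. Then $T^\infty_Y$ is the set of initial segments of $b$.
\end{itemize}
This dichotomy is the main place where care is needed.

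\emph{Strictness.} I will build trees $S_\alpha$ in which the root lies in $D^\alpha_{\mathcal J}(S_\alpha)$ but not in $D^{\alpha+1}_{\mathcal J}(S_\alpha)$:
\begin{itemize}
 \item $S_0=\{\varnothing\}$;
 \item $S_{\alpha+1}$ places a copy of $S_\alpha$ above every child $\langle n\rangle$ of the root;
 \item for limit $\lambda$, fix an increasing sequence $\beta_n\nearrow\lambda$ and place a copy of $S_{\beta_n}$ above $\langle n\rangle$.
\end{itemize}
By locality (Lemma~\ref{lem:persistence}(b)) and induction, the node $\langle n\rangle$ survives exactly the derivatives $D^\gamma$ for $\gamma\le$ its assigned rank.

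For $S_{\alpha+1}$, all children are present in $D^\alpha$ and none in $D^{\alpha+1}$. Thus $\varnothing\in D^{\alpha+1}\setminus D^{\alpha+2}$.

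For $S_\lambda$, fix $\beta<\lambda$. The children present in $D^\beta$ form a cofinite, hence $\mathcal J$-positive, set, so persistence gives $\varnothing\in D^{\beta+1}$ for all $\beta<\lambda$, and therefore $\varnothing\in D^\lambda$. No child survives to $D^\lambda$, so $\varnothing\notin D^{\lambda+1}$. Using cofinite sets rather than a partition into positive pieces is what makes this work for an arbitrary $\mathcal J$, even a maximal one.

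Finally, the Realization Lemma gives $T^\infty_{X_{S_\alpha}}=S_\alpha$. Hence $X_{S_\alpha}\in\HH^{\mathcal J}_{\alpha+1}\setminus\HH^{\mathcal J}_\alpha$.
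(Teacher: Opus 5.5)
Your proof is correct, and for three of the four claims it is the paper's argument. The union inclusion $D^\alpha_{\mathcal J}(S\cup T)\subseteq D^\alpha_{\mathcal J}(S)\cup D^\alpha_{\mathcal J}(T)$ is proved the same way, with persistence at successors and decreasingness at limits. Properness again comes from the full tree $\omega^{<\omega}$. Strictness uses the same trees $S_\alpha$, with a cofinal sequence at limits so that the surviving children form a cofinite set, realized through Lemma~\ref{lem:realization}.

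The genuine difference is tallness. Membership $X\in\HH^{\mathcal J}_1$ only requires the \emph{root} to die at the first derivative, so the paper looks at the first level alone. Either some section $X\cap A_{\langle i\rangle}$ is infinite, and any infinite subset of it has root successor set contained in $\{i\}$; or all first-level sections are finite, and already $\suc_{T^\infty_X}(\varnothing)=\varnothing$.

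Your fan/branch dichotomy instead yields an infinite $Y\subseteq X$ whose trace tree is a single chain, so that $D_{\mathcal J}(T^\infty_Y)=\varnothing$ outright. This is a stronger conclusion: every localization $Y\cap A_s$ lies in $\HH^{\mathcal J}_1(s)$. It costs more work, and two steps deserve a precise justification. First, the branch $b$ should be extracted from $T^\infty_X$: it is finitely branching in this case, and every node has a child in it by pigeonhole, whereas a branch of $T_X$ need not have infinite sections. Second, the points $x_n$ exist for infinitely many $n$ simply because $\bigcap_n A_{b\restr n}$ contains at most one point, so $X\cap A_{b\restr n}$ cannot stabilize.

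For the theorem as stated, the one-level argument is all that is needed.
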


\begin{proof}
\emph{Ideal.} Downward closure follows from
Lemma~\ref{lem:persistence}(a), since $X\subseteq Y$ implies
$T^\infty_X\subseteq T^\infty_Y$. For unions it suffices to prove,
for trees $S,T$,
\[
 D^\alpha_{\mathcal J}(S\cup T)\subseteq
 D^\alpha_{\mathcal J}(S)\cup D^\alpha_{\mathcal J}(T),
\]
by induction on $\alpha$, and to apply it to
$T^\infty_{X\cup Y}=T^\infty_X\cup T^\infty_Y$. The case $\alpha=0$
is trivial. Successor: let
$s\in D^{\alpha+1}_{\mathcal J}(S\cup T)$; then
$\suc_{D^\alpha_{\mathcal J}(S\cup T)}(s)\notin\mathcal J$, and by
the induction hypothesis this set is contained in
$\suc_{D^\alpha_{\mathcal J}(S)}(s)\cup
\suc_{D^\alpha_{\mathcal J}(T)}(s)$, so one of the two is not in
$\mathcal J$, say the one for $S$. By
Lemma~\ref{lem:persistence}(c), $s\in D^\alpha_{\mathcal J}(S)$, and
then $s\in D^{\alpha+1}_{\mathcal J}(S)$. Limit $\lambda$: the
families $A_\beta=D^\beta_{\mathcal J}(S)$ and
$B_\beta=D^\beta_{\mathcal J}(T)$ are decreasing, and for decreasing
families
$\bigcap_{\beta<\lambda}(A_\beta\cup B_\beta)
=\bigcap_\beta A_\beta\cup\bigcap_\beta B_\beta$: if $x$ avoids some
$A_{\beta_0}$ and some $B_{\beta_1}$, it avoids
$A_\beta\cup B_\beta$ for $\beta=\max(\beta_0,\beta_1)$. The
induction hypothesis at each $\beta<\lambda$ now gives
$D^\lambda_{\mathcal J}(S\cup T)\subseteq
\bigcap_\beta(A_\beta\cup B_\beta)
=D^\lambda_{\mathcal J}(S)\cup D^\lambda_{\mathcal J}(T)$.

\emph{Proper.} $T^\infty_\omega=\omega^{<\omega}$ is a
$\mathcal J$-positive Laver tree ($\mathcal J$ is proper), so the
root survives all derivatives and
$\omega\notin\HH^{\mathcal J}_\alpha$.

\emph{Tall for $\alpha\geq1$.} Let $X$ be infinite. If some section
$X\cap A_{\langle i\rangle}$ is infinite, let $Y$ be an infinite
subset of it; then
$\suc_{T^\infty_Y}(\varnothing)\subseteq\{i\}\in\mathcal J$, so the
root dies at the first derivative and
$Y\in\HH^{\mathcal J}_1\subseteq\HH^{\mathcal J}_\alpha$. If all
first-level sections of $X$ are finite, then
$\suc_{T^\infty_X}(\varnothing)=\varnothing\in\mathcal J$ and $X$
itself is in $\HH^{\mathcal J}_1$.

\emph{Strictness.} By recursion on $\alpha$ we construct trees
$S_\alpha\subseteq\omega^{<\omega}$ whose root \emph{dies exactly at
stage $\alpha+1$}:
$\varnothing\in D^\alpha_{\mathcal J}(S_\alpha)\setminus
D^{\alpha+1}_{\mathcal J}(S_\alpha)$. Put $S_0=\{\varnothing\}$: the
root has empty successor set and dies at stage $1$. Successor:
$S_{\alpha+1}=\{\varnothing\}\cup\bigcup_{n\in\omega}
\langle n\rangle\concat S_\alpha$, a copy of $S_\alpha$ below every
child of the root. By Lemma~\ref{lem:persistence}(b), each copy
evolves under the derivative exactly as $S_\alpha$ does, so
$\langle n\rangle\in D^\beta_{\mathcal J}(S_{\alpha+1})$ iff
$\beta\leq\alpha$. Hence for every $\beta\leq\alpha$ the root has
successor set $\omega\notin\mathcal J$ inside
$D^\beta_{\mathcal J}(S_{\alpha+1})$, so it belongs to
$D^{\beta+1}_{\mathcal J}(S_{\alpha+1})$ by
Lemma~\ref{lem:persistence}(c); in particular it survives to stage
$\alpha+1$. At stage $\alpha+2$ its surviving successor set is
$\varnothing\in\mathcal J$: the root dies exactly at
$(\alpha+1)+1$. Limit $\lambda$: fix $\alpha_n\uparrow\lambda$ and
put $S_\lambda=\{\varnothing\}\cup\bigcup_n\langle n\rangle
\concat S_{\alpha_n}$. For every $\beta<\lambda$ the set of children
alive at stage $\beta$ is $\{n:\alpha_n\geq\beta\}$, which is
cofinite, hence not in $\mathcal J$ ($\mathcal J$ is proper), so the
root survives every stage $\beta<\lambda$, and also stage $\lambda$
(an intersection); at stage $\lambda+1$ the surviving successor set
is $\{n:\alpha_n\geq\lambda\}=\varnothing\in\mathcal J$ and the root
dies.

Realize $X_\alpha=X_{S_\alpha}$ by Lemma~\ref{lem:realization}; then
$T^\infty_{X_\alpha}=S_\alpha$ and
$X_\alpha\in\HH^{\mathcal J}_{\alpha+1}\setminus
\HH^{\mathcal J}_\alpha$, proving
$\HH^{\mathcal J}_\alpha\subsetneq\HH^{\mathcal J}_{\alpha+1}$.
\end{proof}

\begin{theorem}[Fubini formula]\label{thm:fubini}
For every node $s$ and $\alpha<\omega_1$, with the localized ideals of
the partition of $A_s$,
\[
 \HH^{\mathcal J}_{\alpha+1}(s)
 =\Bigl\{X\subseteq A_s:
   \{n:X\cap A_{s\concat n}\notin\HH^{\mathcal J}_\alpha(s\concat n)\}
   \in\mathcal J\Bigr\},
\]
and at limits the hierarchy is the union of the previous levels:
$\HH^{\mathcal J}_\lambda(s)
=\bigcup_{\beta<\lambda}\HH^{\mathcal J}_\beta(s)$.
Consequently, for $m<\omega$,
\[
 \HH^{\mathcal J}_m\cong
 \underbrace{\mathcal J\otimes\cdots\otimes\mathcal J}_{m}\otimes\Fin;
 \qquad
 \HH^{\Fin^{\otimes r}}_m\cong\Fin^{\otimes(rm+1)};
 \qquad
 \HH_n\cong\Fin^{\otimes(n+1)}.
\]
\end{theorem}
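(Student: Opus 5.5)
The plan is to prove the localized successor formula directly from the definition of $D_{\mathcal J}$, using the three facts of Lemma~\ref{lem:persistence}, and then to derive the displayed isomorphisms by induction on $m$ using local homogeneity (Corollary~\ref{cor:homogeneity}).

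\textbf{Reading of the localized ideals.} For $X\subseteq A_s$, by locality (Lemma~\ref{lem:persistence}(b)) I read $X\in\HH^{\mathcal J}_\alpha(s)$ as $s\notin D^\alpha_{\mathcal J}(T^\infty_X)$. This agrees with the definition over the reindexed partition of $A_s$, because $T^\infty_X$ above $s$ is exactly the shifted trace tree of $X$ in the localized partition.

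\textbf{The basic observation.} For a tree $T$, a node $s$ and any $\alpha$,
\[
 s\in D^{\alpha+1}_{\mathcal J}(T)\iff
 \{n:s\concat n\in D^\alpha_{\mathcal J}(T)\}\notin\mathcal J .
\]
The forward direction is the definition of $D_{\mathcal J}$ applied to $D^\alpha_{\mathcal J}(T)$. The backward direction is persistence (Lemma~\ref{lem:persistence}(c)), which puts $s$ into $D^\alpha_{\mathcal J}(T)$, followed by the definition again.

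Now take $T=T^\infty_X$ with $X\subseteq A_s$. The trees $T^\infty_X$ and $T^\infty_{X\cap A_{s\concat n}}$ agree on all nodes extending $s\concat n$. Hence, by locality,
\[
 s\concat n\in D^\alpha_{\mathcal J}(T^\infty_X)\iff X\cap A_{s\concat n}\notin\HH^{\mathcal J}_\alpha(s\concat n).
\]
Degenerate cases cause no trouble. If $X\cap A_{s\concat n}$ is finite, then $s\concat n\notin T^\infty_X$, and the finite set lies in $\HH^{\mathcal J}_0(s\concat n)$ and hence in every level. Combining the two displays gives the successor formula.

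For a limit $\lambda$, $D^\lambda_{\mathcal J}$ is the intersection of the earlier derivatives. So $s\notin D^\lambda_{\mathcal J}(T^\infty_X)$ iff $s\notin D^\beta_{\mathcal J}(T^\infty_X)$ for some $\beta<\lambda$, which is the union formula.

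\textbf{The isomorphisms.} First, $\HH^{\mathcal J}_0(s)=\{X\subseteq A_s: s\notin T^\infty_X\}$, which is the ideal of finite subsets of $A_s$, i.e.\ $\Fin$. I use the convention
\[
 \mathcal J\otimes\mathcal I=\{Y\subseteq\omega\times\omega:\{n:Y_n\notin\mathcal I\}\in\mathcal J\},
\]
where $Y_n$ is the $n$-th vertical section.

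Suppose inductively that for every node $t$ there is a bijection $\sigma_t:A_t\to\mathrm{dom}(\mathcal I_m)$ carrying $\HH^{\mathcal J}_m(t)$ onto $\mathcal I_m=\mathcal J^{\otimes m}\otimes\Fin$. By Corollary~\ref{cor:homogeneity} this holds for all $t$ as soon as it holds at the root. Define $\sigma:A_s\to\omega\times\mathrm{dom}(\mathcal I_m)$ by $\sigma(x)=(n,\sigma_{s\concat n}(x))$ for $x\in A_{s\concat n}$. Since the cells $A_{s\concat n}$ partition $A_s$, $\sigma$ is a bijection. By the successor formula it carries $\HH^{\mathcal J}_{m+1}(s)$ onto $\mathcal J\otimes\mathcal I_m$. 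This yields $\HH^{\mathcal J}_m\cong\mathcal J^{\otimes m}\otimes\Fin$, and the special case $\mathcal J=\Fin$ gives $\HH_n\cong\Fin^{\otimes(n+1)}$.

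For $\mathcal J=\Fin^{\otimes r}$, I then use associativity of Fubini products up to the canonical bijection $\omega\times(\omega\times Z)\cong(\omega\times\omega)\times Z$. Together with the invariance of Fubini products under transporting each factor's domain along a bijection, this collapses $(\Fin^{\otimes r})^{\otimes m}\otimes\Fin$ to $\Fin^{\otimes(rm+1)}$.

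\textbf{Main obstacle.} I expect the only delicate point to be bookkeeping rather than combinatorics. Iterated derivatives need not be downward closed, so the step ``$s$ has $\mathcal J$-many live children, hence $s$ itself survives'' must go through persistence and cannot be taken for granted. In addition, the associativity and domain-transport identities for Fubini products have to be stated precisely, so that the final isomorphisms really are induced by bijections of $\omega$.
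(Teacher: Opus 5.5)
Your proposal is correct and follows essentially the same route as the paper: the successor formula comes from the definition of $D_{\mathcal J}$, persistence (Lemma~\ref{lem:persistence}(c)) and locality, and the limit clause from intersections. The isomorphisms come from assembling cell-by-cell bijections by induction on $m$ and then applying associativity of Fubini products. One difference favors you: you apply persistence to the genuine tree $T^\infty_X$, whereas the paper applies it to the restriction of $T^\infty_X$ to the cone above $s$, which is not downward closed, so your bookkeeping is slightly cleaner.
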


\begin{proof}
For $X\subseteq A_s$ write $S=T^\infty_X\cap\{u:u\supseteq s\}$. By
Lemma~\ref{lem:persistence}(b), localization and derivation commute:
membership of $X$ in $\HH^{\mathcal J}_\alpha(s)$ means exactly that
$s$ dies within $\alpha$ derivatives of $S$, because nodes outside
the cone of $s$ are irrelevant to what happens at and above $s$.

\emph{Successor formula.} By locality once more, for each $n$,
\[
 s\concat n\in D^\alpha_{\mathcal J}(S)
 \iff
 X\cap A_{s\concat n}\notin\HH^{\mathcal J}_\alpha(s\concat n).
\]
Write $N=\{n:s\concat n\in D^\alpha_{\mathcal J}(S)\}$. If
$N\notin\mathcal J$, then $s\in D^{\alpha+1}_{\mathcal J}(S)$ by
Lemma~\ref{lem:persistence}(c). If $N\in\mathcal J$, then either
$s\notin D^\alpha_{\mathcal J}(S)$, or
$\suc_{D^\alpha_{\mathcal J}(S)}(s)=N\in\mathcal J$; in both cases
$s\notin D^{\alpha+1}_{\mathcal J}(S)$. This is the displayed
equivalence.

\emph{Limits.} $s\notin D^\lambda_{\mathcal J}(S)
=\bigcap_{\beta<\lambda}D^\beta_{\mathcal J}(S)$ iff
$s\notin D^\beta_{\mathcal J}(S)$ for some $\beta<\lambda$.

\emph{Isomorphisms.} The first is proved by induction on $m$. For
$m=0$: $X\in\HH^{\mathcal J}_0(s)$ iff $s\notin T^\infty_X$ iff
$X\cap A_s$ is finite, so $\HH^{\mathcal J}_0(s)=\Fin(A_s)$.
The successor formula says precisely that
$\HH^{\mathcal J}_{m+1}(s)$ is the $\mathcal J$-Fubini sum of the
ideals $\HH^{\mathcal J}_m(s\concat n)$; by the induction hypothesis
each summand is isomorphic to $\mathcal J^{\otimes m}\otimes\Fin$,
and assembling bijections cell by cell gives
$\HH^{\mathcal J}_{m+1}(s)\cong\mathcal J\otimes(\mathcal J^{\otimes
m}\otimes\Fin)=\mathcal J^{\otimes(m+1)}\otimes\Fin$. Concretely, for
$s=\varnothing$ the isomorphism reads the first $m$ coordinates of
the branch and an enumeration of the level-$m$ cell:
$x\mapsto(b_x(0),\dots,b_x(m-1),e_{b_x\restr m}(x))$, where
$e_u:A_u\to\omega$ are fixed bijections. The second isomorphism
follows by associativity of the Fubini product:
$(\Fin^{\otimes r})^{\otimes m}\otimes\Fin\cong\Fin^{\otimes(rm+1)}$.
The third is the case $r=1$.
\end{proof}

\begin{corollary}\label{cor:chain}
$\HH_n<_{K}\HH_{n+1}$ and $\HH_n<_{KB}\HH_{n+1}$ for all $n$; the
identity witnesses the upward direction. The same holds for
$\HH^{\Fin^{\otimes r}}_m$ with jumps of $r$ powers.
\end{corollary}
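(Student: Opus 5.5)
The argument has two parts: the upward reductions, which come from inclusion, and strictness, which comes from a rank invariant.

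\emph{Upward direction.} By Theorem~\ref{thm:hierarchy}, $\HH_n\subseteq\HH_{n+1}$. Hence if $A\in\HH_n$, then $\mathrm{id}^{-1}[A]=A\in\HH_{n+1}$, so the identity of $\omega$ witnesses $\HH_n\leq_K\HH_{n+1}$. The identity is a bijection, in particular finite-to-one, so it also witnesses $\HH_n\leq_{KB}\HH_{n+1}$ (and indeed $\HH_n\sqsubseteq\HH_{n+1}$). The same applies verbatim to $\HH^{\Fin^{\otimes r}}_m\subseteq\HH^{\Fin^{\otimes r}}_{m+1}$, taking $\mathcal J=\Fin^{\otimes r}$ in Theorem~\ref{thm:hierarchy}.

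\emph{Strictness.} Since $\leq_{KB}$ implies $\leq_K$, it is enough to show $\HH_{n+1}\not\leq_K\HH_n$. By Theorem~\ref{thm:fubini}, $\HH_n\cong\Fin^{\otimes(n+1)}$ and $\HH_{n+1}\cong\Fin^{\otimes(n+2)}$. Suppose, for contradiction, that $\Fin^{\otimes(n+2)}\leq_K\Fin^{\otimes(n+1)}$.
\begin{enumerate}[label=(\roman*)]
 \item By \ref{K:kat}, $\Fin^{\otimes(n+2)}$ has property $\mathrm{Kat}$, so this reduction upgrades to an isomorphic copy $\Fin^{\otimes(n+2)}\sqsubseteq\Fin^{\otimes(n+1)}$.
 \item By \ref{K:rank}, the Borel separation rank is monotone under $\sqsubseteq$. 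This gives $n+2=\rk(\Fin^{\otimes(n+2)})\leq\rk(\Fin^{\otimes(n+1)})=n+1$, a contradiction.
\end{enumerate}
The same argument handles the parametrized chain. By Theorem~\ref{thm:fubini}, $\HH^{\Fin^{\otimes r}}_m\cong\Fin^{\otimes(rm+1)}$, so consecutive levels differ by $r\geq1$ Fubini powers. Property $\mathrm{Kat}$ of $\Fin^{\otimes(r(m+1)+1)}$ together with rank monotonicity then rule out the downward reduction.

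The only step needing care is that \ref{K:rank} is stated as monotonicity under $\sqsubseteq$, not under $\leq_K$. The upgrade through \ref{K:kat} is therefore essential. It has to be applied to the ideal being reduced, namely the higher power $\Fin^{\otimes(n+2)}$, which indeed carries property $\mathrm{Kat}$.

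Two fallbacks are available if one prefers to avoid the Kat property.
\begin{enumerate}[label=(\roman*)]
 \item Kat\v etov reductions between Borel ideals can be taken continuous on $\omega$ (every function on $\omega$ is continuous). Then $f^{-1}$ is a continuous Wadge-type reduction, which yields $\leq_W$ between the ideals as subsets of $2^\omega$. The Borel classes $\Fin^{\otimes k}$ (complete at level $\mathbf\Sigma^0_{2k}$ or so) would then rule out the reduction.
 \item Alternatively, apply the essential-depth invariant introduced later in the paper.
\end{enumerate}
The primary route above is the intended one.
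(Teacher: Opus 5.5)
Your primary argument is correct and is essentially the paper's proof. The inclusion $\HH_n\subseteq\HH_{n+1}$ makes the identity a finite-to-one witness, and strictness follows from the Fubini isomorphisms, property $\mathrm{Kat}$ of the higher power \ref{K:kat}, and rank monotonicity under copies \ref{K:rank}; the $\HH^{\Fin^{\otimes r}}_m$ chain is handled identically.

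Of your fallbacks, (ii) would be circular, since Theorem~\ref{thm:dimension} ends by invoking this very corollary. Fallback (i) is wrong as stated: $A\mapsto f^{-1}[A]$ sends $\mathcal I$ into $\mathcal J$ but not $\mathcal I^+$ into $\mathcal J^+$, so it is not a Wadge reduction. It does, however, also send $\mathcal I^*$ into $\mathcal J^*$, so it pulls back separators and shows directly that the separation rank used in Proposition~\ref{prop:nestedlimit} is monotone under $\leq_K$.
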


\begin{proof}
$\HH_n\subseteq\HH_{n+1}$, so the identity (which is finite-to-one)
witnesses $\HH_n\leq_{KB}\HH_{n+1}$. If conversely
$\HH_{n+1}\leq_K\HH_n$, then, via the isomorphisms of
Theorem~\ref{thm:fubini},
$\Fin^{\otimes(n+2)}\leq_K\Fin^{\otimes(n+1)}$; by \ref{K:kat} there
would be an isomorphic copy
$\Fin^{\otimes(n+2)}\sqsubseteq\Fin^{\otimes(n+1)}$, and \ref{K:rank}
would give $n+2\leq n+1$, a contradiction. The same argument applies
to $\HH^{\Fin^{\otimes r}}_m\cong\Fin^{\otimes(rm+1)}$.
\end{proof}

\begin{proposition}\label{prop:ED}
For all $m$ and $n\geq1$: $\ED_m\subseteq\HH_1$ (the identity is a
$\leq_{KB}$-witness) and $\HH_1\not\leq_K\ED_m$; hence
$\ED_m<_{KB}\HH_n$.
\end{proposition}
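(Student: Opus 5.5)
We need three things: $\ED_m\subseteq\HH_1$; $\HH_1\not\leq_K\ED_m$; and then $\ED_m<_{KB}\HH_n$ for $n\geq1$. Recall that $\HH_1\cong\Fin\otimes\Fin$ by Theorem~\ref{thm:fubini}. Concretely, $X\in\HH_1$ iff only finitely many first-level sections $X\cap A_{\langle i\rangle}$ are infinite; equivalently, the root has finitely many children in $T^\infty_X$.

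\emph{Inclusion.} Since $\HH_1$ is an ideal, it suffices to check the generators of $\ED_m$.

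For a cell $A_t$ with $|t|=m+1\geq1$, every point of $A_t$ lies in $A_{\langle t(0)\rangle}$. Hence $\suc_{T^\infty_{A_t}}(\varnothing)\subseteq\{t(0)\}$ is finite, and $A_t\in\HH_1$.

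For a selector $X\subseteq A_s$ with one point in each $A_{s\concat n}$, split on $s$.
\begin{itemize}[label=--]
 \item If $s\neq\varnothing$, then $X\subseteq A_{\langle s(0)\rangle}$, and again the root has at most one infinite child.
 \item If $s=\varnothing$, each first-level section of $X$ is a singleton. So $T^\infty_X$ has no children of the root, and $X\in\HH_0\subseteq\HH_1$.
\end{itemize}
Since the identity is finite-to-one, it witnesses $\ED_m\leq_{KB}\HH_1$. Composing with the identity $\HH_1\leq_{KB}\HH_n$ from Corollary~\ref{cor:chain} gives $\ED_m\leq_{KB}\HH_n$.

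\emph{Non-reduction.} Suppose $f:\omega\to\omega$ witnesses $\HH_1\leq_K\ED_m$. I would use two facts:
\begin{itemize}[label=--]
 \item $\ED_m$ is $F_\sigma$: membership is bounded by finitely many selectors and finitely many level-$(m+1)$ cells, a $\sigma$-compact condition.
 \item $\Fin\otimes\Fin$ is not Kat\v etov below any $F_\sigma$ ideal.
\end{itemize}
The second fact is standard, but to stay self-contained I would argue directly, as follows.

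For each $i$, the set $f^{-1}[A_{\langle i\rangle}]$ must be in $\ED_m$. The sets $V_i=\bigcup_{j\leq i}A_{\langle j\rangle}$ are in $\HH_1$, so their preimages $f^{-1}[V_i]$ form an increasing sequence in $\ED_m$. The main step is to find a set $Y\subseteq\omega$ with two properties: $Y\in\HH_1$, and $f^{-1}[Y]\notin\ED_m$.

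I would build $Y$ by a diagonal recursion against the countable structure of $\ED_m$. An $\ED_m$ set is covered by finitely many selectors and finitely many level-$(m+1)$ cells. Choose a cell $A_u$ at level $m+1$ on which the preimage $f^{-1}[V_i]$ is large; this is possible because $\omega=\bigcup_i f^{-1}[V_i]$ and $\omega\notin\ED_m$. Then pick, inside $A_u$ and one child at a time, points whose $f$-images each lie in a single first-level column of $\HH_1$ with finitely many points per column. The goal is that $f^{-1}[Y]$ meets infinitely many level-$(m+2)$ subcells of $A_u$ in more points than any fixed finite number of selectors can cover. Meanwhile $Y$ takes only finitely many points from each column $A_{\langle i\rangle}$, so $Y\in\HH_0\subseteq\HH_1$.

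This is the step I expect to be the obstacle: arranging that the images land in distinct columns while the preimages stay concentrated. If points with fixed image collapse, the preimage of a single point would have to be in $\ED_m$, and I would exploit that to refine the choice. If the direct recursion becomes messy, I would fall back on the general $F_\sigma$ argument through a lower semicontinuous submeasure $\varphi$ with $\ED_m=\mathrm{Fin}(\varphi)$.

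With that, $\HH_1\not\leq_K\ED_m$. Since $\HH_1\leq_K\HH_n$, a reduction $\HH_n\leq_K\ED_m$ would compose to give $\HH_1\leq_K\ED_m$. Hence the inequality $\ED_m<_{KB}\HH_n$ is strict.
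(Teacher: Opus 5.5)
Your route is valid, but it differs from the paper's in the non-reduction step. The paper argues as follows: $\HH_1\cong\Fin^{\otimes2}$, so $\HH_1\leq_K\ED_m$ would upgrade, by property $\mathrm{Kat}$ of $\Fin^{\otimes2}$ (\ref{K:kat}), to a copy $\Fin^{\otimes2}\sqsubseteq\ED_m$. That is impossible, because $\ED_m$ is $F_\sigma$ and hence of separation rank $\leq1$, while $\rk(\Fin^{\otimes2})=2$ (\ref{K:rank}). You instead use only that $\ED_m$ is $F_\sigma$ (citable from \cite{Pelayo}, as the paper does), together with the standard fact that $\Fin\otimes\Fin$ is not Kat\v etov below any $F_\sigma$ ideal. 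Your route avoids both Barbarski's upgrade and the Debs--Saint Raymond rank theory, and it produces an explicit test set in $\HH_1$ whose first-level sections are all finite. The paper's version buys a two-line proof on the same template as Corollary~\ref{cor:chain}.

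Your inclusion half coincides with the paper's, apart from a slip you make twice. A selector based at the root is infinite, and so is your set $Y$; neither is in $\HH_0=\Fin$. What you actually showed is that the root of the trace tree has no children. So the root dies at the first derivative and the set lies in $\HH_1$, which is all you need.

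The ``self-contained'' recursion you sketch would fail, however. Any subset of a single level-$(m+1)$ cell $A_u$ already belongs to $\ED_m$, since $A_u$ is a generator. So concentrating $f^{-1}[Y]$ inside one $A_u$ can never make it positive. Positivity has to come from the complements $\omega\setminus f^{-1}[V_n]$, not from the small sets $f^{-1}[V_i]$.

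Your fallback is the right argument, and it is short enough to write out:
\begin{itemize}
 \item By Mazur's theorem, $\ED_m=\mathrm{Fin}(\varphi)$ for some lower semicontinuous submeasure $\varphi$.
 \item Since $f^{-1}[V_n]\in\ED_m$ and $\omega\notin\ED_m$, we have $\varphi(\omega\setminus f^{-1}[V_n])=\infty$. Lower semicontinuity then gives a finite $F_n\subseteq\omega\setminus f^{-1}[V_n]$ with $\varphi(F_n)\geq n$.
 \item Put $Y=\bigcup_nf[F_n]$. Since $f[F_n]$ misses $A_{\langle j\rangle}$ for every $j\leq n$, each first-level section of $Y$ is finite, so $Y\in\HH_1$.
 \item But $f^{-1}[Y]\supseteq\bigcup_nF_n$ has infinite submeasure, so $f^{-1}[Y]\notin\ED_m$.
\end{itemize}
Equivalently, without Mazur: write $\ED_m$ as an increasing union of closed hereditary sets $K_n$, and choose finite $F_n\subseteq\omega\setminus f^{-1}[V_n]$ with $F_n\notin K_n$.
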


\begin{proof}
Each generator of $\ED_m$ (a cell of level $\geq1$, or a selector)
has at most one infinite first-level section, so the root of its
trace tree has at most one child and dies at the first derivative:
the generators, and hence all of $\ED_m$, lie in the ideal $\HH_1$.
If $\HH_1\leq_K\ED_m$, then $\Fin^{\otimes2}\leq_K\ED_m$
(Theorem~\ref{thm:fubini}); by \ref{K:kat} there would be a copy
$\Fin^{\otimes2}\sqsubseteq\ED_m$, but $\ED_m$ is $F_\sigma$
\cite{Pelayo}, hence of separation rank $\leq1$, while
$\rk(\Fin^{\otimes2})=2$, contradicting \ref{K:rank}. Finally
$\ED_m\leq_{KB}\HH_1\leq_{KB}\HH_n$, while $\HH_n\leq_K\ED_m$ would
give $\HH_1\leq_K\ED_m$ by composing with the inclusion.
\end{proof}

\begin{proposition}\label{prop:nestedlimit}
$\Hw:=\bigcup_n\HH_n=\HH_\omega$ is a tall ideal of class
$\boldsymbol\Sigma^0_\omega$, of separation rank exactly $\omega$,
strictly above every $\HH_n$ in $\leq_K$ and $\leq_{KB}$, and
$\Fwp\sqsubseteq\Hw$.
\end{proposition}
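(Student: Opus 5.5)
The plan is to establish the claims in the order they are listed, each reduced to results already proved.

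\emph{Identification and tallness.} First, $\HH_\omega=\bigcup_n\HH_n$ is exactly the limit clause of Theorem~\ref{thm:fubini} at $s=\varnothing$, $\lambda=\omega$. It is an increasing union of ideals, so it is an ideal. It is proper because $\omega\notin\HH_\omega$ by Theorem~\ref{thm:hierarchy}. It is tall because it contains the tall ideal $\HH_1$.

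\emph{Complexity.} By Theorem~\ref{thm:fubini}, $\HH_n\cong\Fin^{\otimes(n+1)}$, and this isomorphism is induced by a bijection of $\omega$, hence by a homeomorphism of $2^\omega$. Since $\Fin^{\otimes(n+1)}$ is Borel of finite class (it lies in $\boldsymbol\Sigma^0_{2n+2}$ by the standard computation), $\HH_\omega$ is a countable union of sets of bounded finite classes. That union is $\boldsymbol\Sigma^0_\omega$. For the separation rank, the upper bound $\rk(\HH_\omega)\le\omega$ follows from $\HH_\omega=\bigcup_n\HH_n$ together with $\rk(\HH_n)=n+1$ by \ref{K:rank}, provided the rank is countably additive for increasing unions in the sense of \cite{DebsSaintRaymond}. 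I would check this against the definition there. For the lower bound, $\HH_\omega\supseteq\HH_n$ and $\HH_n\sqsubseteq\HH_\omega$ via the identity, so monotonicity under $\sqsubseteq$ gives $\rk\ge n+1$ for every $n$, hence $\rk\ge\omega$. Alternatively, the lower bound follows from $\Fwp\sqsubseteq\HH_\omega$ and $\rk(\Fwp)=\omega$ by \ref{K:kwela}.

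\emph{Strictness.} The identity witnesses $\HH_n\le_{KB}\HH_\omega$. Suppose instead $\HH_\omega\le_K\HH_n$. Composing with the identity $\HH_{n+1}\subseteq\HH_\omega$ gives $\HH_{n+1}\le_K\HH_n$, which contradicts Corollary~\ref{cor:chain}. The same argument works for $\le_{KB}$.

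\emph{The copy of $\Fwp$.} By \ref{K:kwela}, with $\le_K$ replaced by $\sqsubseteq$, it suffices to show $\Fin^{\otimes r}\sqsubseteq\HH_\omega$ for every $r$. For $r\ge1$, Theorem~\ref{thm:fubini} gives a bijection $\phi$ with $\phi[\Fin^{\otimes r}]=\HH_{r-1}$. Since $\HH_{r-1}\subseteq\HH_\omega$, the inverse image of an element of $\Fin^{\otimes r}$ under $\phi^{-1}$ lies in $\HH_\omega$. So $\phi^{-1}$ is a bijective Katětov witness, i.e.\ $\Fin^{\otimes r}\sqsubseteq\HH_\omega$. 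This uses that $\sqsubseteq$ in \cite{Kwela} means a bijective Katětov reduction, as fixed in the paper's conventions. Then \ref{K:kwela} yields $\Fwp\sqsubseteq\HH_\omega$.

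\emph{Main obstacle.} I expect the delicate step to be the exact rank and Borel-class bookkeeping: confirming that the separation rank in \cite{DebsSaintRaymond} is bounded by $\omega$ for increasing unions of finite-rank ideals, so that $\rk(\HH_\omega)$ is exactly $\omega$ rather than $\omega+1$. If the general additivity is not available, I would instead compute directly. The root of $T^\infty_X$ dies by stage $\omega$ exactly when it dies at some finite stage $n$. For each fixed $n$ this is a uniformly $\boldsymbol\Sigma^0_{2n+2}$ condition, and I would match this against Kwela's rank-$\omega$ computation for $\Fwp$.
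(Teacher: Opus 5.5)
Your treatment of the identification with the limit clause of Theorem~\ref{thm:fubini}, tallness via $\HH_1$, the $\boldsymbol\Sigma^0_{2n+2}$ computation, the lower bound $\rk\geq n+1$ through copies, strictness, and $\Fwp\sqsubseteq\Hw$ via \ref{K:kwela} matches the paper. The one genuine gap is the upper bound $\rk(\Hw)\leq\omega$.

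You make this bound conditional on a ``countable additivity of rank for increasing unions'' that you do not verify. The obvious proof of such a principle fails. A separator $S_n$ of $\HH_n$ from its dual filter $\HH_n^\star$ is only guaranteed to avoid $\HH_n^\star$. Since $\HH_n\subseteq\Hw$, the filter $\Hw^\star$ contains $\HH_n^\star$ and is strictly larger, so $\bigcup_nS_n$ need not be disjoint from $\Hw^\star$. Your fallback of re-deriving the $\boldsymbol\Sigma^0_{2n+2}$ bounds and comparing with Kwela's rank computation for $\Fwp$ does not produce a separator either.

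You already have both ingredients that close the gap. You proved that $\Hw$ is $\boldsymbol\Sigma^0_\omega$, and you proved that it is a proper ideal, hence disjoint from its own dual filter. Therefore $\Hw$ is itself a $\boldsymbol\Sigma^0_\omega$ set separating $\Hw$ from $\Hw^\star$. The definition of the separation rank then gives $\rk(\Hw)\leq\omega$ directly, which is exactly how the paper argues. No additivity principle is needed, and combined with your lower bound this gives rank exactly $\omega$.
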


\begin{proof}
$\Hw=\HH_\omega$ is the limit clause of Theorem~\ref{thm:fubini}; it
is an ideal (an increasing union of ideals) and tall (it contains the
tall ideal $\HH_1$). Each $\HH_n\cong\Fin^{\otimes(n+1)}$ is of
additive Borel class $\boldsymbol\Sigma^0_{2n+2}$, so the countable
union $\Hw$ is of class $\boldsymbol\Sigma^0_\omega$. Since the
proper ideal $\Hw$ is disjoint from its dual filter, $\Hw$ itself is
a $\boldsymbol\Sigma^0_\omega$ separator; the definition of the
separation rank therefore gives $\rk(\Hw)\leq\omega$. For the lower
bound: for every $n$,
the isomorphism $\Fin^{\otimes(n+1)}\cong\HH_n$ composed with the
inclusion $\HH_n\subseteq\Hw$ is a bijective Kat\v etov witness,
i.e.\ $\Fin^{\otimes(n+1)}\sqsubseteq\Hw$, and monotonicity of the
rank under copies (\ref{K:rank}) gives $\rk(\Hw)\geq n+1$ for all
$n$.

Strictness: the identity witnesses $\HH_n\leq_{KB}\Hw$. If
$\Hw\leq_K\HH_n$ for some $n$, then composing with the inclusion
witness of $\HH_{n+1}\leq_K\Hw$ would give
$\HH_{n+1}\leq_K\HH_n$, contradicting Corollary~\ref{cor:chain}.

Finally, $\Fin^{\otimes r}\leq_K\Hw$ for every $r$ (through
$\HH_{r-1}$), so \ref{K:kwela} yields $\Fwp\sqsubseteq\Hw$.
\end{proof}

\subsection{Descriptive complexity}

\begin{theorem}\label{thm:complexity}
If $\mathcal J$ is Borel (resp.\ analytic), each
$\HH^{\mathcal J}_\alpha$ is Borel (resp.\ analytic). Moreover
$\mathcal J\leq_W\HH^{\mathcal J}_1$ via
$B\mapsto\bigcup_{n\in B}A_{\langle n\rangle}$.
\end{theorem}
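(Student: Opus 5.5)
The plan is to prove, by induction on $\alpha$, a statement uniform in the node: for every $t\in\omega^{<\omega}$, the set
\[
 E^\alpha_t=\{X\subseteq\omega: t\in D^\alpha_{\mathcal J}(T^\infty_X)\}
\]
is Borel (resp.\ analytic) in $2^\omega$. Since $X\in\HH^{\mathcal J}_\alpha$ iff $X\notin E^\alpha_\varnothing$, this gives the first claim for Borel $\mathcal J$. For analytic $\mathcal J$ the complement appears, so the induction has to track both polarities.

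\emph{Base case.} We have $t\in T^\infty_X$ iff $|X\cap A_t|=\omega$, i.e.\ $\forall k\,\exists x>k\,(x\in A_t\wedge x\in X)$. This is $\Pi^0_2$.

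\emph{Successor step.} We have $t\in D^{\alpha+1}_{\mathcal J}(T)$ iff $t\in D^\alpha_{\mathcal J}(T)$ and $\{n:t\concat n\in D^\alpha_{\mathcal J}(T)\}\notin\mathcal J$. The map $\Phi_t:X\mapsto\{n:X\in E^\alpha_{t\concat n}\}\in2^\omega$ is Borel whenever all $E^\alpha_{t\concat n}$ are Borel, since each coordinate is a Borel set. Hence $E^{\alpha+1}_t=E^\alpha_t\cap\Phi_t^{-1}[2^\omega\setminus\mathcal J]$ is Borel.

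\emph{Limit step.} Here $E^\lambda_t=\bigcap_{\beta<\lambda}E^\beta_t$ is a countable intersection, so it stays Borel.

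For analytic $\mathcal J$ I would avoid the polarity issue at the successor step. Note that $2^\omega\setminus\mathcal J$ is coanalytic, so the naive induction alternates polarities. Instead I would argue directly on the membership condition for the ideal. By Lemma~\ref{lem:persistence}(c) and the Fubini formula (Theorem~\ref{thm:fubini}), the condition $X\in\HH^{\mathcal J}_{\alpha+1}(s)$ holds iff $\{n: X\cap A_{s\concat n}\notin\HH^{\mathcal J}_\alpha(s\concat n)\}\in\mathcal J$. I would show by induction that each localized $\HH^{\mathcal J}_\alpha(s)$ is analytic, using the formulation
\[
 \exists B\in\mathcal J\ \forall n\notin B:\ X\cap A_{s\concat n}\in\HH^{\mathcal J}_\alpha(s\concat n).
\]
Since $\mathcal J$ is downward closed, this is equivalent to the Fubini condition, and it is the projection of an analytic set. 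Analytic sets are closed under $\exists^{2^\omega}$, countable $\forall$, countable unions (for limits) and continuous preimages under $X\mapsto X\cap A_{s\concat n}$, so each step preserves analyticity. The same formula also gives a cleaner Borel proof, because the displayed set equals
\[
 \Psi^{-1}[\mathcal J],\qquad \Psi(X)=\{n:X\cap A_{s\concat n}\notin\HH^{\mathcal J}_\alpha(s\concat n)\},
\]
and $\Psi$ is Borel by the induction hypothesis. I would present this Fubini-based induction for both cases, with limits handled by Theorem~\ref{thm:fubini}'s union clause and the base case $\HH^{\mathcal J}_0(s)=\Fin(A_s)$ being $\Sigma^0_2$.

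\emph{Reduction $\mathcal J\leq_W\HH^{\mathcal J}_1$.} Let $F(B)=\bigcup_{n\in B}A_{\langle n\rangle}$. This map is continuous: whether $x\in F(B)$ depends only on the coordinate $b_x(0)$ of $B$. The trace tree of $F(B)$ contains all of $\langle n\rangle\concat\omega^{<\omega}$ for $n\in B$ and nothing at level $1$ otherwise. Hence $\suc_{T^\infty_{F(B)}}(\varnothing)=B$, and the root lies in $T^\infty_{F(B)}$ (if $B=\varnothing$ then $F(B)=\varnothing$, which is in $\HH_1$, matching $\varnothing\in\mathcal J$). So $\varnothing\in D^1_{\mathcal J}(T^\infty_{F(B)})$ iff $B\notin\mathcal J$, i.e.\ $F(B)\in\HH^{\mathcal J}_1$ iff $B\in\mathcal J$.

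The main obstacle is the analytic case. The definition by derivatives naturally produces alternating quantifier polarity through "$\notin\mathcal J$". The existential reformulation via downward closure of $\mathcal J$ is what keeps everything $\Sigma^1_1$.
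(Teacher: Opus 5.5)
Your proposal is correct and follows essentially the paper's route. You induct over the localized levels via the Fubini formula, take the Borel successor step as the preimage of $\mathcal J$ under a Borel map, handle the analytic successor step through the witness form $\exists B\in\mathcal J\ \forall n\notin B$, use countable unions at limits, and obtain the Wadge reduction from the continuous map $B\mapsto\bigcup_{n\in B}A_{\langle n\rangle}$; your direct computation $\suc_{T^\infty_{F(B)}}(\varnothing)=B$ for the first derivative is, if anything, the cleaner way to phrase that last step.
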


\begin{proof}
By induction on $\alpha$ we show that, for every node $s$, the set
$\mathcal H(\alpha,s)=\{X\subseteq\omega:
X\cap A_s\in\HH^{\mathcal J}_\alpha(s)\}$ is Borel (resp.\
analytic); the theorem is the case $s=\varnothing$. For $\alpha=0$,
$\mathcal H(0,s)=\{X:X\cap A_s\ \text{finite}\}$ is
$\boldsymbol\Sigma^0_2$. At limits, by Theorem~\ref{thm:fubini},
$\mathcal H(\lambda,s)=\bigcup_{\beta<\lambda}\mathcal H(\beta,s)$ is
a countable union.

Successor, $\mathcal J$ Borel: by Theorem~\ref{thm:fubini},
\[
 X\in\mathcal H(\alpha+1,s)
 \iff
 \sigma_s(X):=\{n:X\notin\mathcal H(\alpha,s\concat n)\}\in\mathcal J.
\]
Each bit of $\sigma_s(X)$ is a Borel condition on $X$ by the
induction hypothesis, so
$\sigma_s:\Pow(\omega)\to\Pow(\omega)$ is a Borel map and
$\mathcal H(\alpha+1,s)=\sigma_s^{-1}[\mathcal J]$ is Borel.

Successor, $\mathcal J$ analytic: rewrite the condition in witness
form,
\[
 X\in\mathcal H(\alpha+1,s)
 \iff
 \exists A\in\mathcal J\ \forall n\notin A\quad
 X\in\mathcal H(\alpha,s\concat n).
\]
For a fixed witness the inner condition is a countable intersection
of analytic sets, hence analytic, and the whole set is the projection
of the analytic set
$\{(A,X):A\in\mathcal J\ \wedge\ \forall n\notin A\
(X\in\mathcal H(\alpha,s\concat n))\}$, hence analytic.

The map $B\mapsto X_B:=\bigcup_{n\in B}A_{\langle n\rangle}$ is
continuous (each point of $\omega$ lies in exactly one first-level
cell, so each bit of $X_B$ reads one bit of $B$). Its trace tree is
$T^\infty_{X_B}=\{\varnothing\}\cup\{\langle n\rangle\concat t:
n\in B,\ t\in\omega^{<\omega}\}$: a full cone below each $n\in B$,
and full cones are $\mathcal J$-positive Laver. Hence the surviving
successor set of the root after one derivative is exactly $B$, and
$X_B\in\HH^{\mathcal J}_1$ iff $B\in\mathcal J$: an exact Wadge
reduction.
\end{proof}

\begin{theorem}\label{thm:Pi11}
If $\mathcal J$ is a proper Borel ideal with
$\Fin\subseteq\mathcal J$, then $\HH^{\mathcal J}$ is
$\mathbf\Pi^1_1$-complete.
\end{theorem}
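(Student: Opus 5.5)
Membership $\HH^{\mathcal J}\in\mathbf\Pi^1_1$ and hardness are two separate steps, and I would prove them independently.

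\emph{Upper bound.} By Theorem~\ref{thm:nucleus}, $X\notin\HH^{\mathcal J}$ iff $T^\infty_X$ contains a $\mathcal J$-positive Laver tree, that is,
\[
 \exists L\subseteq\omega^{<\omega}\ \bigl[\varnothing\in L\ \wedge\ L\subseteq T^\infty_X\ \wedge\ \forall s\in L\ (\suc_L(s)\notin\mathcal J)\bigr].
\]
The relation $s\in T^\infty_X$ is $\boldsymbol\Pi^0_2$ in $X$, and $L\mapsto\suc_L(s)$ is continuous. Since $\mathcal J$ is Borel, the matrix is Borel in $(L,X)$, so the complement of $\HH^{\mathcal J}$ is a projection of a Borel set and hence $\mathbf\Sigma^1_1$. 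Therefore $\HH^{\mathcal J}\in\mathbf\Pi^1_1$.

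\emph{Hardness.} I would reduce the $\mathbf\Pi^1_1$-complete set $\mathrm{WF}$ of well-founded trees on $\omega$. The reduction first maps a tree $T$ to a tree $S(T)$ that is ``$\mathcal J$-fattened'', and then applies the continuous map $S\mapsto X_S$ of Lemma~\ref{lem:realization}, which gives $T^\infty_{X_S}=S$. Fix a bijection $\langle\cdot,\cdot\rangle:\omega\times\omega\to\omega$. Define $S(T)$ as the set of sequences $\langle\langle a_0,k_0\rangle,\dots,\langle a_{j-1},k_{j-1}\rangle\rangle$ such that $\langle a_0,\dots,a_{j-1}\rangle\in T$, with $k_i$ arbitrary. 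Then $S(T)$ is a tree, $T\mapsto S(T)$ is continuous, and
\[
 \suc_{S(T)}(u)=\{\langle a,k\rangle: a\in\suc_T(\pi u),\ k\in\omega\},
\]
where $\pi$ forgets the second coordinates. This set is empty when $\suc_T(\pi u)=\varnothing$. Otherwise it contains an infinite column $\{\langle a,k\rangle:k\in\omega\}$.

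\emph{Obstacle.} An infinite column need not be $\mathcal J$-positive, because $\mathcal J$ is only assumed proper and $\supseteq\Fin$. I would fix this by replacing the second coordinate with a partition of $\omega$ into countably many $\mathcal J$-positive pieces $P_a$, $a\in\omega$, and letting the successors of $u$ be $\bigcup_{a\in\suc_T(\pi u)}P_a$, with each node remembering which $a$ it came from. Such a partition exists: since $\omega\notin\mathcal J$, recursively split off pieces, and if one cannot split a positive set into two positive sets then $\mathcal J$ restricted to it is a maximal ideal. In that degenerate case I would instead use a single positive set $P$ and encode $a$ by taking the successors to be $P\times\{a\}$ under a coding, so the pieces need not be positive, only nonempty. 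The point to verify is that the resulting successor sets are $\mathcal J$-positive exactly when $\suc_T(\pi u)\neq\varnothing$.

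This is the main technical point. It is not strictly needed, though: a positive-or-empty dichotomy suffices. Once it holds, we get the following two implications.

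If $T$ is ill-founded, pick a branch $b$ and let $L$ be the set of nodes whose projections lie on $b$. The successor set of every node of $L$ contains a positive piece, so $L$ is a $\mathcal J$-positive Laver tree and $X_{S(T)}\notin\HH^{\mathcal J}$ by Theorem~\ref{thm:nucleus}.

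If $T$ is well-founded, then $S(T)$ is well-founded too, since every branch projects to a branch of $T$. A well-founded tree contains no Laver tree at all, because a Laver tree has nonempty successor sets everywhere and hence a branch. So $X_{S(T)}\in\HH^{\mathcal J}$.

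Since $T\mapsto X_{S(T)}$ is continuous and $T\in\mathrm{WF}\iff X_{S(T)}\in\HH^{\mathcal J}$, this gives $\mathrm{WF}\leq_W\HH^{\mathcal J}$, which completes the proof.
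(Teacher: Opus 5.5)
\paragraph{Verdict: gap in the hardness half.} Your upper bound is correct and is the paper's. If you had infinitely many pairwise disjoint $\mathcal J$-positive sets $P_a$, your fattened tree would be exactly the paper's $\Fat(R)$, and both directions of your reduction would go through. The gap is that you never establish those pieces. Your recursion gets stuck exactly when some $\mathcal J$-positive $P$ cannot be split into two positive sets, that is, when $\{A\subseteq P:A\notin\mathcal J\}$ is an ultrafilter on $P$.

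Your fallback for that case does not work, for two reasons. First, in the ill-founded direction the successor set in $L$ of a node $u\in L$ is the single piece $P_{b(|u|)}$. So the pieces must be $\mathcal J$-positive, not merely nonempty. Second, the coded set $\{\langle p,a\rangle:p\in P\}$ has no reason to be $\mathcal J$-positive just because $P$ is. Your remark that ``a positive-or-empty dichotomy suffices'' restates the same requirement; it is not a weakening. The method itself needs some hypothesis on $\mathcal J$ at this step: for a maximal ideal $\mathcal J$ there are not even two disjoint positive sets.

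\paragraph{How to close it.} The missing idea is that the degenerate case cannot occur when $\mathcal J$ is Borel. Then $\{A\subseteq P:A\notin\mathcal J\}$ would be a nonprincipal ultrafilter (nonprincipal since $\Fin\subseteq\mathcal J$) that is Borel, hence has the Baire property. It is invariant under finite changes, so by the $0$--$1$ law it is meager or comeager. But complementation maps it onto its complement, so neither is possible (Sierpi\'nski's argument). This is where the paper uses Borelness in the hardness half. With splitting available, your recursion produces the disjoint positive pieces, and the rest of your argument coincides with the paper's.

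Alternatively, you can avoid pieces altogether. Put
\[
T^+=\{s:\exists t\in T\ (|t|=|s|\wedge\forall i<|s|\ t(i)\leq s(i))\}.
\]
\begin{itemize}
\item $T^+$ is a tree, and each bit of $T^+$ depends on finitely many bits of $T$, so $T\mapsto T^+$ is continuous.
\item If $b$ is a branch of $T$, then $T^+$ contains $\{s:\forall i<|s|\ s(i)\geq b(i)\}$. This tree has cofinite successor sets, which are $\mathcal J$-positive for any proper $\mathcal J\supseteq\Fin$.
\item If $z$ were a branch of $T^+$, K\"onig's lemma applied to the finitely branching tree $\{t\in T:\forall i<|t|\ t(i)\leq z(i)\}$ would give a branch of $T$. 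So $T^+$ is well-founded whenever $T$ is.
\end{itemize}
This yields hardness for every proper $\mathcal J\supseteq\Fin$, with Borelness needed only for the upper bound.
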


\begin{proof}
\emph{Upper bound.} By Theorem~\ref{thm:nucleus},
$X\notin\HH^{\mathcal J}$ iff there exists
$L\subseteq\omega^{<\omega}$ which is a $\mathcal J$-positive Laver
tree with $L\subseteq T^\infty_X$. For Borel $\mathcal J$ the matrix
of this condition is Borel in the pair $(X,L)$ (a countable
conjunction of the conditions ``$\suc_L(s)\notin\mathcal J$'' and
``$s\in L\Rightarrow X\cap A_s$ infinite''), so the complement of
$\HH^{\mathcal J}$ is analytic.

\emph{Splitting.} Every $\mathcal J$-positive set $P\subseteq\omega$
splits into two $\mathcal J$-positive sets. Otherwise, for every
partition $P=P_0\cup P_1$ into two pieces exactly one piece is
positive, so $\{A\subseteq P:A\notin\mathcal J\}$ is an ultrafilter
on $P$; it is nonprincipal (as $\Fin\subseteq\mathcal J$) and Borel,
hence has the Baire property. But it is invariant under finite
symmetric differences, so the topological zero--one law makes it
meager or comeager; complementation maps it onto its complement,
making either alternative impossible (the standard category proof of
Sierpi\'nski's theorem; see \cite[Section~8]{Kechris}). Iterating, we obtain pairwise
disjoint $\mathcal J$-positive sets
$\langle B_a:a\in\omega\rangle$: split $\omega=P_0\cup B_0$ with both
pieces positive, then $P_0=P_1\cup B_1$, and so on.

\emph{Fattening.} For a tree $R\subseteq\omega^{<\omega}$ define
\[
 \Fat(R)=\{t\in\omega^{<\omega}:
 \exists r\in R\ \ |r|=|t|\ \wedge\
 \forall i<|t|\ \ t(i)\in B_{r(i)}\}.
\]
Since the $B_a$ are pairwise disjoint, the witness $r$ is unique
(each $t(i)$ lies in at most one $B_a$), so there is a well-defined
reading map $t\mapsto r_t$ on $\Fat(R)$, coherent under restriction,
and $\Fat(R)$ is a tree. We claim: $R$ is ill-founded iff $\Fat(R)$
contains a $\mathcal J$-positive Laver tree. If $z$ is an infinite
branch of $R$, then
$L_z=\{t:\forall i<|t|\ \ t(i)\in B_{z(i)}\}\subseteq\Fat(R)$ is
$\mathcal J$-positive Laver, since
$\suc_{L_z}(t)=B_{z(|t|)}\notin\mathcal J$. Conversely, if
$L\subseteq\Fat(R)$ is $\mathcal J$-positive Laver, then $L$ has an
infinite branch (successor sets are nonempty), and the reading map
carries it to an infinite branch of $R$.

\emph{Reduction.} Compose with the Realization Lemma: the map
$R\mapsto X_{\Fat(R)}$ is continuous (whether a fixed point $p_{s,n}$
belongs to $X_{\Fat(R)}$ asks whether $s\in\Fat(R)$, which reads the
single bit ``$r_s\in R$'' when the candidate witness $r_s$ exists,
and is constantly false otherwise), and by
Theorem~\ref{thm:nucleus} together with the claim,
\[
 R\ \text{well-founded}
 \iff
 X_{\Fat(R)}\in\HH^{\mathcal J}.
\]
This reduces the $\mathbf\Pi^1_1$-complete set of well-founded trees
continuously to $\HH^{\mathcal J}$.
\end{proof}

\subsection{Functoriality}

For $A\subseteq\omega^{<\omega}$ write
$\mathord\downarrow A=\{t:\exists s\in A\ (t\subseteq s)\}$. For
$1\leq\alpha<\omega_1$ define genuine ideals on the set of nodes by
\[
 \mathfrak R^{\mathcal J}_\alpha
 =\{A\subseteq\omega^{<\omega}:
   \varnothing\notin D^\alpha_{\mathcal J}(\mathord\downarrow A)\},
 \qquad
 \mathfrak R^{\mathcal J}
 =\bigcup_{1\leq\alpha<\omega_1}\mathfrak R^{\mathcal J}_\alpha .
\]
Indeed, downward closure preserves inclusions and finite unions, so
the union argument in the proof of Theorem~\ref{thm:hierarchy}
applies. Finite sets die after the first derivative, the full tree
survives every derivative, and the hierarchy is increasing; hence
these are proper ideals containing all finite sets.

\begin{theorem}\label{thm:functorial}
If $f$ witnesses $\mathcal J\leq_K\mathcal K$, then its coordinatewise
action $f^{<\omega}$ witnesses
$\mathfrak R^{\mathcal J}_\alpha\leq_K\mathfrak R^{\mathcal K}_\alpha$
for every $1\leq\alpha<\omega_1$, and
$\mathfrak R^{\mathcal J}\leq_K\mathfrak R^{\mathcal K}$;
finite-to-one is preserved.
\end{theorem}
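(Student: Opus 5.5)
The plan is to show that $F=f^{<\omega}$, defined by $F(t)=f\circ t$ (so $|F(t)|=|t|$ and $F(t)(i)=f(t(i))$), is a Kat\v etov witness. Here $f:\omega\to\omega$ satisfies $f^{-1}[E]\in\mathcal K$ whenever $E\in\mathcal J$. Two structural features of $F$ will be used. First, $F$ commutes with restriction, $F(t\restr k)=F(t)\restr k$. Consequently $F$ maps trees onto trees, and for every $A$ we have $F[\mathord\downarrow F^{-1}[A]]\subseteq\mathord\downarrow A$. Second, $F(\varnothing)=\varnothing$ and $F(t\concat n)=F(t)\concat f(n)$.

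The core of the argument is the transfer inequality
\[
 F\bigl[D^\beta_{\mathcal K}(T)\bigr]\subseteq D^\beta_{\mathcal J}(F[T])
 \qquad\text{for every tree }T\text{ and every }\beta<\omega_1,
\]
which I will prove by induction on $\beta$. The case $\beta=0$ is trivial.

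\emph{Successor step.} Let $t\in D^{\beta+1}_{\mathcal K}(T)$ and put $E=\suc_{D^\beta_{\mathcal K}(T)}(t)$, so that $E\notin\mathcal K$. If $f[E]$ belonged to $\mathcal J$, then $E\subseteq f^{-1}[f[E]]\in\mathcal K$, which is impossible; hence $f[E]\notin\mathcal J$. For each $n\in E$, the induction hypothesis gives $F(t)\concat f(n)=F(t\concat n)\in D^\beta_{\mathcal J}(F[T])$. Therefore $\suc_{D^\beta_{\mathcal J}(F[T])}(F(t))\supseteq f[E]\notin\mathcal J$. Since $F[T]$ is a tree, persistence (Lemma~\ref{lem:persistence}(c)) yields $F(t)\in D^{\beta+1}_{\mathcal J}(F[T])$. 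This use of persistence is the only delicate point, because iterated derivatives need not be downward closed.

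\emph{Limit step.} The image of an intersection is contained in the intersection of the images, which is enough.

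With the transfer inequality in hand, suppose $A\in\mathfrak R^{\mathcal J}_\alpha$, and suppose for contradiction that $\varnothing\in D^\alpha_{\mathcal K}(\mathord\downarrow F^{-1}[A])$. Applying the transfer inequality to the tree $T=\mathord\downarrow F^{-1}[A]$, then monotonicity (Lemma~\ref{lem:persistence}(a)), and using $F(\varnothing)=\varnothing$, we get
\[
 \varnothing\in D^\alpha_{\mathcal J}\bigl(F[\mathord\downarrow F^{-1}[A]]\bigr)\subseteq D^\alpha_{\mathcal J}(\mathord\downarrow A).
\]
This contradicts $A\in\mathfrak R^{\mathcal J}_\alpha$. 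Hence $F^{-1}[A]\in\mathfrak R^{\mathcal K}_\alpha$, so $F$ witnesses $\mathfrak R^{\mathcal J}_\alpha\leq_K\mathfrak R^{\mathcal K}_\alpha$.

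The statement for $\mathfrak R^{\mathcal J}$ follows at once, since every $A\in\mathfrak R^{\mathcal J}$ lies in some $\mathfrak R^{\mathcal J}_\alpha$ and $F$ does not depend on $\alpha$.

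Finally, finite-to-one is preserved. Preimages under $F$ preserve length, and $F^{-1}(u)=\prod_{i<|u|}f^{-1}(u(i))$ is a finite product of finite sets whenever $f$ is finite-to-one. I expect the successor step, namely the check that the image successor set is $\mathcal J$-positive together with the appeal to persistence, to be the only non-routine point.
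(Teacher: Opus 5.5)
Your proof is correct and is essentially the paper's argument. The paper proves the same transfer by induction, in the dual preimage form $D^\alpha_{\mathcal K}\bigl((f^{<\omega})^{-1}[S]\bigr)\subseteq (f^{<\omega})^{-1}\bigl[D^\alpha_{\mathcal J}(S)\bigr]$ for trees $S$, with the same successor step (a $\mathcal K$-positive successor set forces a $\mathcal J$-positive one, then persistence on the $\mathcal J$-side), and finishes via $\mathord\downarrow\bigl((f^{<\omega})^{-1}[A]\bigr)\subseteq(f^{<\omega})^{-1}[\mathord\downarrow A]$ and monotonicity on the $\mathcal K$-side; your image form and its preimage form follow from each other by monotonicity, so the difference is only notational.
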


\begin{proof}
We first record the tree-level statement. By induction on $\alpha$
one proves the inclusion
\[
 D_{\mathcal K}^{\alpha}
 \bigl((f^{<\omega})^{-1}[S]\bigr)
 \subseteq
 (f^{<\omega})^{-1}
 \bigl[D_{\mathcal J}^{\alpha}(S)\bigr].
\]
Successor step: let
$t\in D^{\alpha+1}_{\mathcal K}((f^{<\omega})^{-1}[S])$, so that
$E=\suc_{D^\alpha_{\mathcal K}((f^{<\omega})^{-1}[S])}(t)
\notin\mathcal K$. By the induction hypothesis,
$E\subseteq f^{-1}\bigl[\suc_{D^\alpha_{\mathcal J}(S)}
(f^{<\omega}(t))\bigr]$; if
$\suc_{D^\alpha_{\mathcal J}(S)}(f^{<\omega}(t))$ were in
$\mathcal J$, its $f$-preimage would be in $\mathcal K$ and would
contain $E$, a contradiction. Hence, by
Lemma~\ref{lem:persistence}(c) on the $\mathcal J$-side,
$f^{<\omega}(t)\in D^{\alpha+1}_{\mathcal J}(S)$. Limits are intersections.

Now let $A\in\mathfrak R^{\mathcal J}_\alpha$ and put
$S=\mathord\downarrow A$. Evaluating the displayed inclusion at the
root shows that $(f^{<\omega})^{-1}[S]$ is small on the
$\mathcal K$-side. Moreover,
\[
 \mathord\downarrow\bigl((f^{<\omega})^{-1}[A]\bigr)
 \subseteq (f^{<\omega})^{-1}[\mathord\downarrow A].
\]
Monotonicity therefore gives
$(f^{<\omega})^{-1}[A]\in\mathfrak R^{\mathcal K}_\alpha$. This proves
the reduction at every fixed $\alpha$; choosing an $\alpha$ that
contains a given member proves the reduction for the unions. Finally,
if $f$ is finite-to-one, so is the length-preserving map
$f^{<\omega}$.
\end{proof}

\begin{proposition}\label{prop:lifting}
If in addition there is a finite-to-one $g:\omega\to\omega$ with
$b_{g(x)}=f\circ b_x$, then $g$ witnesses
$\HH^{\mathcal J}_\alpha\leq_{KB}\HH^{\mathcal K}_\alpha$ and
$\HH^{\mathcal J}\leq_{KB}\HH^{\mathcal K}$.
\end{proposition}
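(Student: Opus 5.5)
Given a member $X\in\HH^{\mathcal J}_\alpha$, we must show $g^{-1}[X]\in\HH^{\mathcal K}_\alpha$. The plan is to reduce this to the tree-level inclusion proved inside Theorem~\ref{thm:functorial},
\[
 D^\alpha_{\mathcal K}\bigl((f^{<\omega})^{-1}[S]\bigr)\subseteq(f^{<\omega})^{-1}\bigl[D^\alpha_{\mathcal J}(S)\bigr],
\]
applied to $S=T^\infty_X$. That argument used only that $f$ is a Kat\v etov witness $\mathcal K\to\mathcal J$ on successor sets, together with Lemma~\ref{lem:persistence}(c) on the $\mathcal J$-side. For the latter we need $S$ to be a tree, and $T^\infty_X$ is one.

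The key step is a comparison of trace trees:
\[
 T^\infty_{g^{-1}[X]}\subseteq(f^{<\omega})^{-1}\bigl[T^\infty_X\bigr].
\]
To see this, let $t\in T^\infty_{g^{-1}[X]}$, so that $g^{-1}[X]\cap A_t$ is infinite. For each $x$ in this set, $b_x\supseteq t$, hence $b_{g(x)}=f\circ b_x\supseteq f^{<\omega}(t)$, i.e.\ $g(x)\in X\cap A_{f^{<\omega}(t)}$. Since $g$ is finite-to-one, the image of an infinite set is infinite. Therefore $X\cap A_{f^{<\omega}(t)}$ is infinite, and $f^{<\omega}(t)\in T^\infty_X$.

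Now combine. Monotonicity (Lemma~\ref{lem:persistence}(a)) and the displayed inclusion give
\[
 D^\alpha_{\mathcal K}\bigl(T^\infty_{g^{-1}[X]}\bigr)\subseteq(f^{<\omega})^{-1}\bigl[D^\alpha_{\mathcal J}(T^\infty_X)\bigr].
\]
Evaluate at the root. Since $f^{<\omega}(\varnothing)=\varnothing$ and $\varnothing\notin D^\alpha_{\mathcal J}(T^\infty_X)$, we get $\varnothing\notin D^\alpha_{\mathcal K}(T^\infty_{g^{-1}[X]})$, that is, $g^{-1}[X]\in\HH^{\mathcal K}_\alpha$. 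Because $g$ is finite-to-one, this is a $\leq_{KB}$ witness at each fixed level. The statement for $\HH^{\mathcal J}=\bigcup_\alpha\HH^{\mathcal J}_\alpha$ follows by choosing, for a given $X$, an $\alpha$ with $X\in\HH^{\mathcal J}_\alpha$; the same $g$ then works uniformly.

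The main point to check is that the tree-level inclusion from Theorem~\ref{thm:functorial} applies to $S=T^\infty_X$ and not only to sets of the form $\mathord\downarrow A$. Its proof was a pure induction using a tree $S$, so it carries over unchanged. I would restate it briefly in the proof rather than cite it loosely.

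A subtler worry is that $f^{<\omega}$ must be the coordinatewise map, so that $b_{g(x)}\restr|t|=f^{<\omega}(b_x\restr|t|)$. This is exactly what the hypothesis $b_{g(x)}=f\circ b_x$ provides. No properness issues arise, since both hierarchies are proper by Theorem~\ref{thm:hierarchy}.
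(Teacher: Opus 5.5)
Your proposal is correct and follows the paper's proof essentially step for step: you compare trace trees, $T^\infty_{g^{-1}[X]}\subseteq(f^{<\omega})^{-1}[T^\infty_X]$, using that $g$ is finite-to-one, and then combine monotonicity with the tree-level inclusion from Theorem~\ref{thm:functorial} applied to $S=T^\infty_X$, evaluated at the root. You also note that this inclusion needs only that $S$ is downward closed (for Lemma~\ref{lem:persistence}(c)), which is exactly why it applies to $T^\infty_X$.
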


\begin{proof}
We first check that
\[
 T^\infty_{g^{-1}[X]}
 \subseteq
 (f^{<\omega})^{-1}[T^\infty_X]
 \qquad(X\subseteq\omega).
\]
Let $t\in T^\infty_{g^{-1}[X]}$, i.e.\ infinitely many
$x\in g^{-1}[X]$ have $b_x\supseteq t$. Since $g$ is finite-to-one,
the set of their images $g(x)\in X$ is infinite, and each satisfies
$b_{g(x)}=f\circ b_x\supseteq f^{<\omega}(t)$; hence
$X\cap A_{f^{<\omega}(t)}$ is infinite and
$f^{<\omega}(t)\in T^\infty_X$.

Now let $X\in\HH^{\mathcal J}_\alpha$, so
$\varnothing\notin D^\alpha_{\mathcal J}(T^\infty_X)$. By
Lemma~\ref{lem:persistence}(a) and the tree-level inclusion proved in
Theorem~\ref{thm:functorial},
\[
 D^\alpha_{\mathcal K}(T^\infty_{g^{-1}[X]})
 \subseteq
 D^\alpha_{\mathcal K}\bigl((f^{<\omega})^{-1}[T^\infty_X]\bigr)
 \subseteq
 (f^{<\omega})^{-1}\bigl[D^\alpha_{\mathcal J}(T^\infty_X)\bigr],
\]
and the last set does not contain $\varnothing$. Hence
$g^{-1}[X]\in\HH^{\mathcal K}_\alpha$, and $g$ is finite-to-one.
\end{proof}

\begin{remark}
The lifting hypothesis is not implied by
$\mathcal J\leq_K\mathcal K$: it asks the branch set to be closed
under the coordinatewise map. Beyond inclusion
($\mathcal J\subseteq\mathcal K$ gives
$\HH^{\mathcal J}\subseteq\HH^{\mathcal K}$, with the identity as
witness) and explicit liftings, the functoriality of
$\HH^{\mathcal J}$ at the level of points remains open; see
Problem~\ref{prob:DJ}.
\end{remark}

\begin{remark}[the binary case, in one line]\label{rem:binary}
Replacing $\omega^{<\omega}$ by $2^{<\omega}$ does not produce a
parallel theory: the condition ``infinitely many successors'' is
vacuous, requiring both children does not give an ideal, and the
binary derivative stabilizes at $\omega$ with core the classical
nowhere dense ideal of the countable dense set
\cite{FarahSolecki,KwelaSabok}. The correct transfinite counterpart
on the compact space is the hierarchy $\mathrm{conv}_\alpha$ of
\cite{FKK,FKK2}, already developed and strictly decreasing in the
Kat\v etov order.
\end{remark}

\section{The main theorem: \texorpdfstring{$\Fw\not\leq_K\Hw$}{Fin-omega is not Katetov below the nested limit}}\label{sec:main}

Throughout this section $\Fin^{\otimes k}$ has the \emph{first}
coordinate outermost: $C\in\Fin^{\otimes(k+1)}$ iff all but finitely
many sections $C_{n}=\{t:(n)\concat t\in C\}$ belong to
$\Fin^{\otimes k}$. For $k\leq N$,
$\pi_{k,N}:\omega^N\to\omega^k$ denotes the projection onto the
\emph{last} $k$ coordinates. We write $\Fw$ for the canonical
inductive limit of the finite Fubini powers along these projections,
as defined by Kwela
\cite[Definition~2.1 and Subsection~2.4]{Kwela}. For the system of
suffix projections his definition unwinds to the following
combinatorial form, which is the one we use: the domain of $\Fw$ is
the disjoint sum $D=\bigsqcup_{m\geq1}\omega^m$, and for
$M\subseteq D$,
\begin{equation}\label{eq:Fw}
 M\in\Fw
 \iff
 \exists k\geq1\ \exists B\in\Fin^{\otimes k}\
 \forall m\geq k\quad
 \pi_{k,m}[M\cap\omega^m]\subseteq B.
\end{equation}
Indeed, Kwela's Definition~2.1 says that $M$ is small precisely when
there are $k\geq1$ and
$P\in(\Fin^{\otimes k})^\star$ such that
\[
 M\cap\bigsqcup_{m\geq k}\pi_{k,m}^{-1}[P]=\varnothing .
\]
Putting $B=\omega^k\setminus P\in\Fin^{\otimes k}$ turns this
condition into
$\pi_{k,m}[M\cap\omega^m]\subseteq B$ for every $m\geq k$, and the
same calculation backwards gives the converse. Thus
\eqref{eq:Fw} is exactly Kwela's inductive limit for the full-domain
suffix maps, not merely an isomorphic presentation.

We also fix an elementary insertion lemma for cylinders over
Fubini-small sets.

\begin{lemma}[insertion]\label{lem:insertion}
Let $1\leq j_1<\dots<j_r\leq k$ and let
$\pi:\omega^k\to\omega^r$, $\pi(t)=(t(j_1),\dots,t(j_r))$, be the
corresponding ordered projection (coordinates counted from the
outermost). If $B\in\Fin^{\otimes r}$ then
$\pi^{-1}[B]\in\Fin^{\otimes k}$.
\end{lemma}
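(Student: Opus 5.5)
We argue by induction on $k$, the number of coordinates of the domain, proving the statement simultaneously for all $r\leq k$ and all increasing index tuples $1\leq j_1<\dots<j_r\leq k$. The coordinate system is the one fixed in this section: coordinate $1$ is outermost, and $C\in\Fin^{\otimes(k+1)}$ iff all but finitely many first-coordinate sections $C_n$ lie in $\Fin^{\otimes k}$. The base case is $k=1$. There $r=1$, $\pi$ is the identity, and the claim is trivial. The degenerate case $r=0$ can be excluded, since then $B$ would have to be empty or the statement is vacuous.

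For the inductive step, fix $k\geq1$, an index tuple in $\{1,\dots,k+1\}$, and $B\in\Fin^{\otimes r}$. Set $C=\pi^{-1}[B]\subseteq\omega^{k+1}$ and examine its first-coordinate sections $C_n=\{t\in\omega^k:(n)\concat t\in C\}$. We split into two cases according to whether the outermost coordinate is among the selected ones.

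\emph{Case $j_1>1$.} The first coordinate is ignored by $\pi$. Hence every section is $C_n=\pi'^{-1}[B]$, where $\pi':\omega^k\to\omega^r$ selects the coordinates $j_1-1<\dots<j_r-1$. These indices are $\geq1$, so the induction hypothesis applies and gives $C_n\in\Fin^{\otimes k}$ for every $n$. Therefore $C\in\Fin^{\otimes(k+1)}$.

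\emph{Case $j_1=1$.} Now $C_n=\{t:(t(j_2-1),\dots,t(j_r-1))\in B_n\}$, where $B_n$ is the $n$-th section of $B$.
\begin{itemize}[label={}]
 \item If $r=1$, then $B\in\Fin$, so $B_n=\varnothing$ for all but finitely many $n$, and $C_n=\varnothing$ for those $n$.
 \item If $r\geq2$, then all but finitely many $B_n$ lie in $\Fin^{\otimes(r-1)}$. For each such $n$, the induction hypothesis applied to the ordered projection onto the coordinates $j_2-1<\dots<j_r-1$ of $\omega^k$ gives $C_n\in\Fin^{\otimes k}$.
\end{itemize}
In both subcases, all but finitely many sections of $C$ lie in $\Fin^{\otimes k}$, so $C\in\Fin^{\otimes(k+1)}$.

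The only point requiring care is the case bookkeeping. The induction must be on $k$ with the index tuple allowed to vary, so that the inductive hypothesis covers the shifted tuples $j_i-1$ that appear after passing to sections. One must also read Fubini smallness as ``all but finitely many sections small'' rather than ``all sections small'': this is exactly what makes the unselected-coordinate case give every section small, and the selected case give cofinitely many. No further results from earlier in the paper are needed; the lemma is a direct unwinding of the recursive definition of $\Fin^{\otimes k}$.
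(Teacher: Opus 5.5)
Your proof is correct and follows the paper's argument: induction on the dimension of the domain, split into the case $j_1>1$ and the case $j_1=1$. When $j_1>1$, every section is the preimage under the shifted projection. When $j_1=1$, the subcase $r=1$ gives cofinitely many empty sections, and the subcase $r\geq2$ applies the induction hypothesis to the shifted projection on the cofinitely many small sections $B_n$. The only differences are cosmetic: you step from $k$ to $k+1$ where the paper steps from $k-1$ to $k$, and your aside about $r=0$ is unnecessary, since the statement already presupposes $r\geq1$.
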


\begin{proof}
Induction on $k$. For $k=1$ we have $r=1$ and $\pi$ the identity.
Let $k>1$. If $j_1=1$ and $r=1$, then
$\pi^{-1}[B]=B\times\omega^{k-1}$ with $B\in\Fin$; only finitely many
outer sections are nonempty, so $\pi^{-1}[B]\in\Fin^{\otimes k}$.
Assume now that $j_1=1$ and $r\geq2$. The section of
$\pi^{-1}[B]$ at first coordinate $n$ is
$\pi'^{-1}[B_n]$, where $\pi'$ is the ordered projection of
$\omega^{k-1}$ given by $j_2-1<\dots<j_r-1$ and $B_n$ is the
corresponding section of $B$. All but finitely many $B_n$ lie in
$\Fin^{\otimes(r-1)}$, and for those the induction hypothesis puts
the section in $\Fin^{\otimes(k-1)}$. If $j_1>1$ (the outermost
coordinate is free), every section equals $\pi'^{-1}[B]$ for the
shifted projection $\pi'$ of $\omega^{k-1}$, which lies in
$\Fin^{\otimes(k-1)}$ by the induction hypothesis. In both cases
$\pi^{-1}[B]\in\Fin\otimes\Fin^{\otimes(k-1)}=\Fin^{\otimes k}$.
\end{proof}

\subsection{The coordinate criterion and finite fragments}

\begin{lemma}[tower criterion]\label{lem:tower}
For an ideal $\mathcal I$ on a countable set $\Omega$ the following
are equivalent:
\begin{enumerate}[label=(\alph*)]
 \item $\Fw\leq_K\mathcal I$;
 \item there are sets $E_k\subseteq\Omega$ and maps
 $g_k:E_k\to\omega^k$ $(k\geq1)$ such that:
 (i) $E_1=\Omega$, $E_{k+1}\subseteq E_k$ and
 $\bigcap_kE_k=\varnothing$;
 (ii) $\Omega\setminus E_k\in\mathcal I$ for all $k$;
 (iii) $\pi_{k,k+1}\circ g_{k+1}=g_k$ on $E_{k+1}$;
 (iv) $g_k^{-1}[B]\in\mathcal I$ for every $B\in\Fin^{\otimes k}$.
\end{enumerate}
\end{lemma}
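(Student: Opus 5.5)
The plan is to prove both directions directly from the combinatorial form \eqref{eq:Fw} of $\Fw$. The level structure of $D=\bigsqcup_{m\geq1}\omega^m$ supplies the sets $E_k$, and the suffix projections supply the maps $g_k$. Throughout I use that suffix projections compose: $\pi_{k,k+1}\circ\pi_{k+1,m}=\pi_{k,m}$ for $k<m$. By induction, (iii) then gives $\pi_{k,m}\circ g_m=g_k$ on $E_m$ for all $k\leq m$.

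\emph{(a)$\Rightarrow$(b).} Let $f:\Omega\to D$ witness $\Fw\leq_K\mathcal I$. I will set
\[
 E_k=f^{-1}\Bigl[\textstyle\bigsqcup_{m\geq k}\omega^m\Bigr],
 \qquad
 g_k(x)=\pi_{k,m}(f(x))\ \text{ when } f(x)\in\omega^m,\ m\geq k .
\]
Then the conditions are checked in turn.
\begin{enumerate}[label=(\roman*)]
 \item $E_1=\Omega$, and the $E_k$ decrease. A point $x$ with $f(x)\in\omega^m$ is not in $E_{m+1}$, so $\bigcap_kE_k=\varnothing$.
 \item $\Omega\setminus E_k=f^{-1}\bigl[\bigsqcup_{m<k}\omega^m\bigr]$. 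The set $\bigsqcup_{m<k}\omega^m$ belongs to $\Fw$ by \eqref{eq:Fw} with this $k$ and $B=\varnothing$, because it has no points at levels $m\geq k$. Hence $\Omega\setminus E_k\in\mathcal I$.
 \item This is the composition identity for suffix projections.
 \item For $B\in\Fin^{\otimes k}$ we have $g_k^{-1}[B]=f^{-1}[M]$ with $M=\bigsqcup_{m\geq k}\pi_{k,m}^{-1}[B]$. This $M$ lies in $\Fw$ by \eqref{eq:Fw}, witnessed by the same $k$ and $B$, so $g_k^{-1}[B]\in\mathcal I$.
\end{enumerate}

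\emph{(b)$\Rightarrow$(a).} For $x\in\Omega$ let $m(x)$ be the largest $k$ with $x\in E_k$. It exists by (i): $x\in E_1$, the sets decrease, and their intersection is empty. Define $f(x)=g_{m(x)}(x)\in\omega^{m(x)}\subseteq D$. Let $M\in\Fw$, witnessed by $k$ and $B\in\Fin^{\otimes k}$ as in \eqref{eq:Fw}. I claim
\[
 f^{-1}[M]\subseteq(\Omega\setminus E_k)\cup g_k^{-1}[B].
\]
Indeed, if $x\in E_k$ and $f(x)\in M$, then $m=m(x)\geq k$ and $f(x)=g_m(x)\in M\cap\omega^m$. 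By \eqref{eq:Fw} this gives $\pi_{k,m}(g_m(x))\in B$, and by the iterated coherence $g_k(x)=\pi_{k,m}(g_m(x))\in B$. The right-hand side of the claim lies in $\mathcal I$ by (ii) and (iv), so $f$ witnesses $\Fw\leq_K\mathcal I$.

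There is no serious obstacle here. The only point needing care is the bookkeeping of the iterated coherence: the single-step condition (iii) must be propagated to $\pi_{k,m}\circ g_m=g_k$ on all of $E_m$. This uses that the $E_m$ are nested, so that each intermediate $g_j$ is defined at $x$, and that the projections are taken onto the \emph{last} coordinates, so they compose correctly.
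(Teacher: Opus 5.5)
Your proof is correct and follows the paper's argument essentially line by line. In (a)$\Rightarrow$(b) you use the same level sets $E_k$ and suffix-projected maps $g_k$, and in (b)$\Rightarrow$(a) the same map $f(x)=g_{\max\{k:x\in E_k\}}(x)$ with the inclusion $f^{-1}[M]\subseteq(\Omega\setminus E_k)\cup g_k^{-1}[B]$. Your explicit propagation of (iii) to $\pi_{k,m}\circ g_m=g_k$ on $E_m$, using nestedness, just spells out the step the paper calls ``by coherence''.
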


\begin{proof}
(a)$\Rightarrow$(b). Let $f:\Omega\to D$ be a reduction. Let
$\ell(x)$ be the level with $f(x)\in\omega^{\ell(x)}$ and put
$E_k=\{x:\ell(x)\geq k\}$, $g_k(x)=\pi_{k,\ell(x)}(f(x))$. The union
of the first $k-1$ levels of $D$ belongs to $\Fw$ (take $B=\varnothing$
in \eqref{eq:Fw}), so $\Omega\setminus E_k$, its $f$-preimage, is in
$\mathcal I$. Coherence (iii) holds because compositions of suffix
projections are suffix projections. For (iv), given
$B\in\Fin^{\otimes k}$, the set
$M_B=\bigcup_{m\geq k}\pi_{k,m}^{-1}[B]$ belongs to $\Fw$ by
\eqref{eq:Fw}, and $g_k^{-1}[B]=E_k\cap f^{-1}[M_B]\in\mathcal I$.
Each $\ell(x)$ is finite, so $\bigcap_kE_k=\varnothing$.

(b)$\Rightarrow$(a). For $x\in\Omega$ let
$\ell(x)=\max\{k:x\in E_k\}$, which exists by (i), and define
$f(x)=g_{\ell(x)}(x)\in\omega^{\ell(x)}\subseteq D$. Let
$M\in\Fw$ and choose $k$, $B$ as in \eqref{eq:Fw}. If $f(x)\in M$,
then either $x\notin E_k$, or $\ell(x)\geq k$ and, by coherence,
$g_k(x)=\pi_{k,\ell(x)}(f(x))\in
\pi_{k,\ell(x)}[M\cap\omega^{\ell(x)}]\subseteq B$. Hence
$f^{-1}[M]\subseteq(\Omega\setminus E_k)\cup g_k^{-1}[B]
\in\mathcal I$, and $f$ witnesses $\Fw\leq_K\mathcal I$.
\end{proof}

\begin{theorem}[coordinate criterion]\label{thm:criterion}
For an ideal $\mathcal I$ on a countable set $\Omega$ the following
are equivalent:
\begin{enumerate}[label=(\alph*)]
 \item $\Fw\leq_K\mathcal I$;
 \item there are functions $\xi_j:\Omega\to\omega$ $(j\geq1)$ such
 that for every $k$ the window
 $q_k=(\xi_k,\xi_{k-1},\dots,\xi_1):\Omega\to\omega^k$ witnesses
 $\Fin^{\otimes k}\leq_K\mathcal I$.
\end{enumerate}
A sequence as in (b) is called a \emph{tower} over $\mathcal I$.
\end{theorem}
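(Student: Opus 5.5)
The plan is to derive the criterion from the tower criterion, Lemma~\ref{lem:tower}, so that it suffices to pass between towers $\langle\xi_j\rangle$ and the data $\langle E_k,g_k\rangle$ of Lemma~\ref{lem:tower}(b). The basic observation is that the windows are automatically coherent. The suffix projection $\pi_{k,k+1}$ deletes the first (outermost) coordinate, so $\pi_{k,k+1}\circ q_{k+1}=q_k$ holds identically on $\Omega$. Thus a tower is exactly a coherent system of maps defined everywhere, and the only work is to manufacture, or discard, the domains $E_k$.

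\emph{(a)$\Rightarrow$(b).} Apply Lemma~\ref{lem:tower} to obtain $E_k$ and $g_k$. For $x\in E_j$ let $\xi_j(x)$ be the first coordinate of $g_j(x)$; for $x\notin E_j$ put $\xi_j(x)=0$. By induction on $k$, using coherence (iii) and $E_{k+1}\subseteq E_k$, we get $g_k(x)=(\xi_k(x),\dots,\xi_1(x))=q_k(x)$ for every $x\in E_k$. The inductive step is that deleting the first coordinate of $g_{k+1}(x)$ yields $g_k(x)$, while the deleted coordinate is $\xi_{k+1}(x)$ by definition. Hence, for $B\in\Fin^{\otimes k}$,
\[
 q_k^{-1}[B]\subseteq(\Omega\setminus E_k)\cup g_k^{-1}[B]\in\mathcal I
\]
by (ii) and (iv). So each window $q_k$ witnesses $\Fin^{\otimes k}\leq_K\mathcal I$.

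\emph{(b)$\Rightarrow$(a).} Given a tower, we need domains $E_k$ with $E_1=\Omega$, decreasing, with empty intersection and complements in $\mathcal I$. Since $\mathcal I$ is proper and contains $\Fin$, the set $\Omega$ is infinite. Enumerate $\Omega=\{x_0,x_1,\dots\}$ and set $E_k=\Omega\setminus\{x_i:i<k-1\}$. Then $E_1=\Omega$, the sequence is decreasing, $\bigcap_kE_k=\varnothing$, and each complement is finite, hence in $\mathcal I$. Put $g_k=q_k\restr E_k$. Coherence (iii) is the identity $\pi_{k,k+1}\circ q_{k+1}=q_k$ noted above. For (iv), $g_k^{-1}[B]\subseteq q_k^{-1}[B]\in\mathcal I$ because $q_k$ witnesses $\Fin^{\otimes k}\leq_K\mathcal I$. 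Lemma~\ref{lem:tower} then gives $\Fw\leq_K\mathcal I$.

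There is no genuine obstacle here. The one point requiring care is the bookkeeping of conventions: $\Fin^{\otimes k}$ has its first coordinate outermost, $\pi_{k,m}$ keeps the \emph{last} $k$ coordinates, and the window lists $\xi_k$ first. It is precisely this matching that makes the new coordinate $\xi_{k+1}$ the outermost one at each stage, and that makes the coherence in Lemma~\ref{lem:tower}(iii) automatic.
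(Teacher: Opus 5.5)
Your proposal is correct and follows essentially the same route as the paper: both directions go through Lemma~\ref{lem:tower}. In one direction you read $\xi_j$ off the outermost coordinate of $g_j$, with default value $0$ off $E_j$; in the other you restrict the automatically coherent windows $q_k$ to decreasing cofinite domains. The only differences are cosmetic: you make explicit the induction behind $q_k\restr E_k=g_k$, and you use an enumeration of $\Omega$ where the paper uses an injection $e:\Omega\to\omega$.
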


\begin{proof}
(a)$\Rightarrow$(b). Take $(E_k,g_k)$ from
Lemma~\ref{lem:tower}. By coherence, $g_{k+1}$ extends $g_k$ by
prepending one outermost coordinate; define $\xi_j(x)$, for
$x\in E_j$, as the first coordinate of $g_j(x)$, and $\xi_j(x)=0$ for
$x\notin E_j$. Then $q_k\restr E_k=g_k$, so for
$B\in\Fin^{\otimes k}$,
\[
 q_k^{-1}[B]\subseteq g_k^{-1}[B]\cup(\Omega\setminus E_k)
 \in\mathcal I .
\]

(b)$\Rightarrow$(a). Fix an injection $e:\Omega\to\omega$ and put
$E_k=\{x:e(x)\geq k-1\}$, $g_k=q_k\restr E_k$. The complements
$\Omega\setminus E_k$ are finite, $\bigcap_kE_k=\varnothing$,
coherence is automatic (the $q_k$ are windows of one sequence), and
$g_k^{-1}[B]\subseteq q_k^{-1}[B]\in\mathcal I$. Now apply
Lemma~\ref{lem:tower}.
\end{proof}

\begin{lemma}[subtuples]\label{lem:subtuples}
If $q_k=(\xi_k,\dots,\xi_1)$ witnesses
$\Fin^{\otimes k}\leq_K\mathcal I$ and $j_1<\dots<j_r\leq k$, then
$(\xi_{j_r},\dots,\xi_{j_1})$ witnesses
$\Fin^{\otimes r}\leq_K\mathcal I$.
\end{lemma}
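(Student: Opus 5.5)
The plan is to factor the map through an ordered projection and invoke Lemma~\ref{lem:insertion}. Put $\pi:\omega^k\to\omega^r$ for the ordered projection that keeps exactly the coordinates of $q_k$ holding $\xi_{j_r},\dots,\xi_{j_1}$. Then the proposed witness is
\[
 p:=(\xi_{j_r},\dots,\xi_{j_1})=\pi\circ q_k .
\]

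First I locate these coordinates. In $q_k=(\xi_k,\xi_{k-1},\dots,\xi_1)$ the function $\xi_j$ sits in position $k-j+1$, counted from the outermost. Since $j_1<\dots<j_r$, the positions $k-j_r+1<\dots<k-j_1+1$ are strictly increasing and lie in $\{1,\dots,k\}$. So $\pi$ is an ordered projection of exactly the kind treated in Lemma~\ref{lem:insertion}. It also preserves the order of the listed coordinates, so that $\pi(q_k(x))=(\xi_{j_r}(x),\dots,\xi_{j_1}(x))$ with $\xi_{j_r}$ outermost, matching the required tuple.

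Next, let $B\in\Fin^{\otimes r}$. Lemma~\ref{lem:insertion} gives $\pi^{-1}[B]\in\Fin^{\otimes k}$. Because $q_k$ witnesses $\Fin^{\otimes k}\leq_K\mathcal I$,
\[
 p^{-1}[B]=q_k^{-1}\bigl[\pi^{-1}[B]\bigr]\in\mathcal I .
\]
As $B$ was an arbitrary member of $\Fin^{\otimes r}$, this shows that $p$ witnesses $\Fin^{\otimes r}\leq_K\mathcal I$.

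There is no real obstacle here. The only care needed is in the index bookkeeping: the window lists $\xi$ in decreasing order, and the insertion lemma counts coordinates from the outermost. One has to check that the chosen positions stay increasing, so that $\pi$ is order-preserving and not a permuted projection. A permuted projection would not in general pull $\Fin^{\otimes r}$ back into $\Fin^{\otimes k}$.
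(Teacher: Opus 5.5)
Your proposal is correct and is essentially the paper's proof. The paper likewise writes the subtuple as $\pi\circ q_k$ for the ordered projection onto positions $k+1-j_r<\dots<k+1-j_1$ and pulls back $B\in\Fin^{\otimes r}$ through Lemma~\ref{lem:insertion}.
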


\begin{proof}
$(\xi_{j_r},\dots,\xi_{j_1})=\pi\circ q_k$, where $\pi$ is the
ordered projection of $\omega^k$ onto the positions
$k+1-j_r<\dots<k+1-j_1$ (counted from the outermost). For
$B\in\Fin^{\otimes r}$, Lemma~\ref{lem:insertion} gives
$\pi^{-1}[B]\in\Fin^{\otimes k}$, so
$(\pi\circ q_k)^{-1}[B]=q_k^{-1}[\pi^{-1}[B]]\in\mathcal I$.
\end{proof}

\begin{proposition}[all finite fragments are realizable]
\label{prop:fragments}
For every $N\geq1$ there are maps $q_k:\omega\to\omega^k$
$(1\leq k\leq N)$ commuting with the projections $\pi_{k,N}$ such that
each $q_k$ witnesses $\Fin^{\otimes k}\leq_K\Hw$; moreover all
preimages of small sets may be taken inside the single level
$\HH_{N-1}$.
\end{proposition}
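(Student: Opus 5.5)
The plan is to build the whole fragment from the single isomorphism of Theorem~\ref{thm:fubini} at level $N-1$, and then read off the smaller windows as suffixes. By that theorem there is a bijection $\varphi:\omega\to\omega^N$ carrying $\HH_{N-1}$ onto $\Fin^{\otimes N}$. Concretely $\varphi(x)=(b_x(0),\dots,b_x(N-2),e_{b_x\restr(N-1)}(x))$, with the first coordinate outermost, matching the convention of this section. Put $q_N=\varphi$ and $q_k=\pi_{k,N}\circ q_N$ for $1\leq k\leq N$. Commutation with the projections is then automatic: suffix projections compose to suffix projections, so $\pi_{k,N}\circ q_N=q_k$ and $\pi_{k,k'}\circ q_{k'}=q_k$ for $k\leq k'\leq N$.

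The next step is the Kat\v etov property of each window. Fix $k\leq N$ and $B\in\Fin^{\otimes k}$. Then $q_k^{-1}[B]=\varphi^{-1}[\pi_{k,N}^{-1}[B]]$. Here $\pi_{k,N}$ is the ordered projection onto the positions $N-k+1<\dots<N$, counted from the outermost. Lemma~\ref{lem:insertion} therefore gives $\pi_{k,N}^{-1}[B]\in\Fin^{\otimes N}$. Pulling back along the isomorphism $\varphi$ yields $q_k^{-1}[B]\in\HH_{N-1}\subseteq\Hw$. This proves both claims at once: each $q_k$ witnesses $\Fin^{\otimes k}\leq_K\Hw$, and all preimages of small sets lie in the single level $\HH_{N-1}$. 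Equivalently, in the language of Theorem~\ref{thm:criterion}, the $N$ coordinates of $\varphi$ read from the innermost outward are $\xi_1,\dots,\xi_N$, and they form a tower truncated at height $N$.

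The only point needing care is orientation. The isomorphism of Theorem~\ref{thm:fubini} must be checked to place the branch coordinate $b_x(0)$ outermost, with the innermost coordinate being the enumeration of the level-$(N-1)$ cell, which is the $\Fin$ factor. Otherwise the suffix windows would be prefixes and Lemma~\ref{lem:insertion} would have to be applied to a different position set. It still applies, since that lemma handles arbitrary ordered projections. The successor formula of Theorem~\ref{thm:fubini} fixes the orientation: the first branch coordinate is the outer $\Fin$-Fubini index. I expect no genuine obstacle. The content of the proposition is only that a fixed height $N$ can be absorbed inside one finite level, and this construction breaks down when one tries to let $N$ grow coherently.
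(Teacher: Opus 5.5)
Your proposal is correct and is essentially the paper's own proof: you take $q_N$ to be the Fubini isomorphism $\HH_{N-1}\cong\Fin^{\otimes N}$ with $b_x(0)$ outermost, set $q_k=\pi_{k,N}\circ q_N$, and use Lemma~\ref{lem:insertion} to place $\pi_{k,N}^{-1}[B]$ in $\Fin^{\otimes N}$, which pulls back into $\HH_{N-1}$. Your orientation check is the right one. It also agrees with the successor formula of Theorem~\ref{thm:fubini}, which makes the first branch coordinate the outer Fubini index.
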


\begin{proof}
By Theorem~\ref{thm:fubini} there is a bijection
$q_N:\omega\to\omega^N$ with
$X\in\HH_{N-1}\iff q_N[X]\in\Fin^{\otimes N}$; concretely,
\[
 q_N(x)=\bigl(b_x(0),\dots,b_x(N-2),\,e_{b_x\restr(N-1)}(x)\bigr),
\]
where $e_u:A_u\to\omega$ are fixed bijections of the level-$(N-1)$
cells. In particular $q_N^{-1}[B]\in\HH_{N-1}$ for
$B\in\Fin^{\otimes N}$. Put $q_k=\pi_{k,N}\circ q_N$. For
$B\in\Fin^{\otimes k}$, Lemma~\ref{lem:insertion} gives
$\pi_{k,N}^{-1}[B]\in\Fin^{\otimes N}$ (the last $k$ positions form
an ordered projection), so $q_k^{-1}[B]\in\HH_{N-1}$. Coherence with
the projections is automatic. We call this system the
\emph{canonical fragment of length $N$}.
\end{proof}

Consequently no refutation of $\Fw\leq_K\Hw$ can proceed through a
fixed finite number of levels: the failure, if any, must be a failure
of compactness. The rest of the section quantifies it exactly.

\subsection{Uniformization and essential depth}

For a pair $\varphi,\eta:\omega\to\omega$ and $h:\omega\to\omega$ we
write
\[
 W_h=\{x:\eta(x)\leq h(\varphi(x))\}
\]
for the \emph{graph tests} of the pair $(\varphi,\eta)$. If
$(\varphi,\eta)$ witnesses $\Fin^{\otimes2}\leq_K\Hw$
(outer coordinate $\varphi$), then every $W_h$ belongs to $\Hw$,
being the preimage of the graph
$\{(a,b):b\leq h(a)\}\in\Fin^{\otimes2}$.

\begin{lemma}[graph uniformization]\label{lem:uniformization}
If $(\varphi,\eta)$ witnesses $\Fin^{\otimes2}\leq_K\Hw$, then there
is $m<\omega$ with $W_h\in\HH_m$ for \emph{every} $h$. The analogue
holds in every dimension $k$: if $q_k=(\xi_k,\dots,\xi_1)$ witnesses
$\Fin^{\otimes k}\leq_K\Hw$, there is $m<\omega$ such that
$U_f=\{x:\xi_1(x)\leq f(\xi_k(x),\dots,\xi_2(x))\}\in\HH_m$ for every
$f:\omega^{k-1}\to\omega$. The proof uses only that $\Hw$ is an
increasing union of ideals.
\end{lemma}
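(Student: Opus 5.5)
The plan is to argue by contradiction with a diagonal construction, treating $k=2$ first; the general case is the same argument with $\varphi$ replaced by the tuple $(\xi_k,\dots,\xi_2)$. Suppose that for every $m$ there is some $h$ with $W_h\notin\HH_m$. We will build a single $h$ with $W_h\notin\Hw$. That contradicts the hypothesis, because $W_h$ is the preimage of the graph region $\{(a,b):b\le h(a)\}\in\Fin^{\otimes2}$.

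\emph{Step 1: monotone counterexamples.} Two facts are used. First, $h\le h'$ pointwise implies $W_h\subseteq W_{h'}$. Second, each $\HH_m$ is downward closed. Hence if $W_h\notin\HH_m$, then $W_{h'}\notin\HH_m$ for every $h'\ge h$. Pick witnesses $h_m$ and replace them by $g_m=\max(h_0,\dots,h_m)$. This gives a pointwise nondecreasing sequence $g_0\le g_1\le\cdots$ with $W_{g_m}\notin\HH_m$ for all $m$. Since $\HH_m\subseteq\HH_{m'}$ for $m\le m'$, also $W_{g_{m'}}\notin\HH_m$ whenever $m'\ge m$.

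\emph{Step 2: fibers and a sparse subsequence.} For each $a$, the set $\varphi^{-1}[\{0,\dots,a-1\}]$ is the preimage of $a\times\omega\in\Fin^{\otimes2}$. So it lies in $\Hw=\bigcup_n\HH_n$, say in $\HH_{j(a)}$. Choose $m_0<m_1<\cdots$ recursively with $m_{i+1}\ge j(m_i)$. Then define $h(a)=g_{m_{i+1}}(a)$ for $m_i\le a<m_{i+1}$, and $h(a)=0$ for $a<m_0$. For $a\in[m_i,m_{i+1})$ we have $h(a)=g_{m_{i+1}}(a)$. Moreover, for $a\ge m_{i+1}$ the value $h(a)=g_{m_l}(a)$ has $l>i+1$, and monotonicity gives $h(a)\ge g_{m_{i+1}}(a)$. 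Therefore
\[
 W_{g_{m_{i+1}}}\subseteq W_h\cup\varphi^{-1}[\{0,\dots,m_i-1\}].
\]
The point of this choice is that the leftover fiber set is indexed by $m_i$, not $m_{i+1}$. Consequently it already lies in $\HH_{j(m_i)}\subseteq\HH_{m_{i+1}}$. This is the one delicate step, and I expect it to be the main obstacle. The naive choice $h(a)=\max_{m\le a}g_m(a)$ leaves an error set whose level is not controlled, and the offset by one block is exactly what repairs this.

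\emph{Step 3: conclusion, and dimension $k$.} If $W_h\in\HH_n$, choose $i$ with $m_{i+1}\ge n$. Then the displayed inclusion puts $W_{g_{m_{i+1}}}$ in $\HH_{m_{i+1}}$, since that level is an ideal containing both pieces. This contradicts Step 1. So $W_h\notin\Hw$, the required contradiction.

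For general $k$, fix an exhaustion $F_0\subseteq F_1\subseteq\cdots$ of $\omega^{k-1}$ by finite sets, and read ``$a<m$'' as ``$(\xi_k,\dots,\xi_2)(x)\in F_m$''. The needed fact is that $q_k^{-1}[F_m\times\omega]\in\Hw$. This holds because $F_m\times\omega$ is a finite union of sets whose outermost coordinate is fixed, so each has only one nonempty outer section, and such sets are $\Fin^{\otimes k}$-small. Define $f$ blockwise on $F_{m_{i+1}}\setminus F_{m_i}$ as in Step 2, and the same three steps go through verbatim. Only the facts that each $\HH_m$ is an ideal and that the $\HH_m$ increase with union $\Hw$ are used, as the statement claims.
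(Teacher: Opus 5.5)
Your proof is correct and is essentially the paper's argument. You argue by contradiction, glue the counterexample functions into a single $h$ by a maximum on tails, and absorb the error set $\varphi^{-1}[\{0,\dots,n-1\}]\in\Hw$ by synchronizing levels. The only difference is bookkeeping: the paper keeps the plain maximum $h(i)=\max\{h_n(i):n\le i\}$ but chooses each $h_n$ with $W_{h_n}\notin\HH_{\max(n,r_n)}$, where $P_n=\varphi^{-1}[\{0,\dots,n-1\}]\in\HH_{r_n}$. This makes your monotonization and one-block offset unnecessary.

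In dimension $k$ you should also state explicitly that $\{(p,b)\in\omega^{k-1}\times\omega:b\le f(p)\}$ has all innermost sections finite and therefore lies in $\Fin^{\otimes k}$. That is what gives $U_f\in\Hw$ for every $f$.
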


\begin{proof}
Dimension $2$. The fibres $R_i=\varphi^{-1}\{i\}$ lie in $\Hw$
(preimages of $\{i\}\times\omega\in\Fin^{\otimes2}$), and so does
every $W_h$. Suppose there is no uniform bound. For each $n\geq1$ the
set $P_n=\bigcup_{i<n}R_i$ lies in $\Hw$; pick $r_n$ with
$P_n\in\HH_{r_n}$, and, using the failure of uniformity, pick $h_n$
with $W_{h_n}\notin\HH_{\max(n,r_n)}$. Then
$V_n=W_{h_n}\setminus P_n\notin\HH_n$: otherwise both $V_n$ and
$W_{h_n}\cap P_n$ would lie in $\HH_{\max(n,r_n)}$, hence so would
their union $W_{h_n}$. Define
\[
 h(i)=\max\{h_n(i):n\leq i\},
\]
a finite maximum. For each $n$ and $x\in V_n$ we have
$\varphi(x)\geq n$ (as $x\notin P_n$), so
$h(\varphi(x))\geq h_n(\varphi(x))\geq\eta(x)$ and $x\in W_h$. Thus
$W_h\supseteq V_n$ for every $n$, so $W_h\notin\HH_n$ for all $n$,
contradicting $W_h\in\Hw=\bigcup_n\HH_n$.

Dimension $k$. Fix an enumeration $\langle p_i:i\in\omega\rangle$ of
$\omega^{k-1}$. The fibres
$R_{p}=\{x:(\xi_k(x),\dots,\xi_2(x))=p\}$ lie in $\Hw$ (preimages of
$\{p\}\times\omega$, which is in $\Fin^{\otimes k}$ by
Lemma~\ref{lem:insertion}), and each $U_f$ lies in $\Hw$ (its image
set has all innermost sections finite, hence lies in
$\Fin^{\otimes k}$ by induction on $k$ as in
Lemma~\ref{lem:insertion}). Now repeat the diagonal above with $R_i$
replaced by $R_{p_i}$: if there is no uniform bound, choose $r_n$,
$f_n$ as before and set $f(p)=\max\{f_n(p):n\leq i(p)\}$, where
$i(p)$ is the index of $p$; the same computation shows
$U_f\supseteq U_{f_n}\setminus\bigcup_{i<n}R_{p_i}\notin\HH_n$ for
all $n$, a contradiction.
\end{proof}

\begin{lemma}[fronts]\label{lem:fronts}
Let $U\subseteq\omega^{<\omega}$ be downward closed and $m\geq1$.
Then $\varnothing\notin D^m_{\Fin}(U)$ if and only if
$U\cap\omega^m\in\Fin^{\otimes m}$.
\end{lemma}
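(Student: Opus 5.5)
Since whether a node lies in $D^j_{\Fin}(U)$ depends only on the part of $U$ above it (Lemma~\ref{lem:persistence}(b)), we may induct on $m$ while letting the root vary. For a downward closed $U$ and a node $t$, write $U_t=\{u:t\concat u\in U\}$; this set is again downward closed. The statement to prove, for all downward closed $U$ simultaneously, is
\[
 \varnothing\notin D^m_{\Fin}(U)\iff U\cap\omega^m\in\Fin^{\otimes m},
\]
where $\Fin^{\otimes m}$ has the outermost-first convention fixed in this section. Applied to $U_t$, it says that $t\in D^m_{\Fin}(U)$ if and only if $U_t\cap\omega^m\notin\Fin^{\otimes m}$.

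\emph{Base case $m=1$.} Here $D^1_{\Fin}(U)\ni\varnothing$ exactly when $\varnothing\in U$ and $\suc_U(\varnothing)$ is infinite. Since $U$ is downward closed, $U\cap\omega^1=\{\langle n\rangle:n\in\suc_U(\varnothing)\}$, and in particular $\varnothing\in U$ whenever this set is nonempty. Hence $\varnothing\notin D^1_{\Fin}(U)$ if and only if $U\cap\omega^1$ is finite, which is the claim since $\Fin^{\otimes1}=\Fin$.

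\emph{Inductive step.} Assume the claim for $m$. I would reuse the argument of the successor clause in Theorem~\ref{thm:fubini}. Put $N=\{n:\langle n\rangle\in D^m_{\Fin}(U)\}$. If $N$ is infinite, then $\varnothing\in D^{m+1}_{\Fin}(U)$ by Lemma~\ref{lem:persistence}(c). If $N$ is finite, then either $\varnothing\notin D^m_{\Fin}(U)$, or its surviving successor set is $N\in\Fin$; in both cases $\varnothing\notin D^{m+1}_{\Fin}(U)$. So $\varnothing\notin D^{m+1}_{\Fin}(U)$ if and only if $N$ is finite. By locality and the induction hypothesis applied to $U_{\langle n\rangle}$, we have $n\in N$ if and only if $U_{\langle n\rangle}\cap\omega^m\notin\Fin^{\otimes m}$. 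The set $U_{\langle n\rangle}\cap\omega^m$ is exactly the $n$-th outer section of $U\cap\omega^{m+1}$. Therefore $N$ is finite if and only if all but finitely many outer sections of $U\cap\omega^{m+1}$ lie in $\Fin^{\otimes m}$, which is precisely $U\cap\omega^{m+1}\in\Fin^{\otimes(m+1)}$.

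\emph{Where care is needed.} I do not expect a real obstacle. The only point to watch is that iterated derivatives need not be downward closed, which is why I apply Lemma~\ref{lem:persistence}(c) to the tree $U$ itself and never to $D^m_{\Fin}(U)$. Downward closure of $U$ is used in exactly two places: to make the sections $U_{\langle n\rangle}$ downward closed, so that the induction hypothesis applies to them, and in the base case, to ensure $\varnothing\in U$ whenever $U\cap\omega^1\neq\varnothing$.
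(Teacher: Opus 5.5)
Your proof is correct and is essentially the paper's own argument. It uses induction on $m$, the successor equivalence from the proof of Theorem~\ref{thm:fubini} to reduce $\varnothing\notin D^{m+1}_{\Fin}(U)$ to finiteness of $\{n:\langle n\rangle\in D^m_{\Fin}(U)\}$, locality plus the induction hypothesis on the downward closed sections $U_{\langle n\rangle}$, and the identification of $U_{\langle n\rangle}\cap\omega^m$ with the $n$-th outer section of $U\cap\omega^{m+1}$. (Your count of where downward closure is used leaves out the tree hypothesis of Lemma~\ref{lem:persistence}(c), though you do apply that lemma correctly to $U$ itself.)
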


\begin{proof}
Induction on $m$. For $m=1$: $\varnothing\notin D_{\Fin}(U)$ iff the
root has finitely many children in $U$ iff $U\cap\omega^1$ is finite.
For the step, write $U_n=\{t:\langle n\rangle\concat t\in U\}$
(downward closed). By Lemma~\ref{lem:persistence}(b),
$\langle n\rangle\in D^m_{\Fin}(U)$ iff
$\varnothing\in D^m_{\Fin}(U_n)$, and by the successor equivalence in
the proof of Theorem~\ref{thm:fubini} (with $\mathcal J=\Fin$),
$\varnothing\notin D^{m+1}_{\Fin}(U)$ iff
$\{n:\langle n\rangle\in D^m_{\Fin}(U)\}$ is finite. By the induction
hypothesis this set equals
$\{n:U_n\cap\omega^m\notin\Fin^{\otimes m}\}$ up to a finite set, and
its finiteness is precisely
$U\cap\omega^{m+1}\in\Fin\otimes\Fin^{\otimes m}
=\Fin^{\otimes(m+1)}$.
\end{proof}

\begin{lemma}[local--global propagation]\label{lem:localglobal}
For $X\subseteq\omega$ and $d\geq1$ let
\[
 P_d(X)=\{s\in\omega^d:X\cap A_s\notin\Hw(s)\},
\]
where $\Hw(s)$ is the localized nested limit of the subtree at $s$.
\begin{enumerate}[label=(\alph*)]
 \item If $X\in\Hw$ then $P_d(X)\in\Fin^{\otimes d}$. Consequently,
 if $P\subseteq\omega^d$ is $\Fin^{\otimes d}$-positive and
 $X_s\subseteq A_s$ satisfies $X_s\notin\Hw(s)$ for each $s\in P$,
 then $\bigcup_{s\in P}X_s\notin\Hw$.
 \item If $P\in\Fin^{\otimes d}$ then
 $\bigcup_{s\in P}A_s\in\HH_d$.
\end{enumerate}
\end{lemma}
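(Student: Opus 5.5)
Both parts reduce to the localized Fubini formula of Theorem~\ref{thm:fubini} at level $d$, combined with Lemma~\ref{lem:fronts}. The localized nested limit is $\Hw(s)=\bigcup_n\HH_n(s)$; the limit clause of Theorem~\ref{thm:fubini} makes this the same as $\HH_\omega(s)$.

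\emph{Part (a).} Let $X\in\Hw$, so $X\in\HH_n$ for some $n$. Iterating the successor formula of Theorem~\ref{thm:fubini} $d$ times (with $\mathcal J=\Fin$) gives the level-$d$ description
\[
 X\in\HH_{n+d}\iff \{s\in\omega^d: X\cap A_s\notin\HH_n(s)\}\in\Fin^{\otimes d}.
\]
The induction on $d$ uses the outermost-first convention: $\Fin\otimes\Fin^{\otimes(d-1)}=\Fin^{\otimes d}$. Since $\HH_n\subseteq\HH_{n+d}$, the set $Q=\{s\in\omega^d:X\cap A_s\notin\HH_n(s)\}$ lies in $\Fin^{\otimes d}$. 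Moreover $P_d(X)\subseteq Q$, because $X\cap A_s\notin\Hw(s)$ implies $X\cap A_s\notin\HH_n(s)$. Hence $P_d(X)\in\Fin^{\otimes d}$.

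For the consequence, put $X=\bigcup_{s\in P}X_s$. The cells $A_s$ with $s\in\omega^d$ are pairwise disjoint, so $X\cap A_s=X_s$ for $s\in P$. Thus $P\subseteq P_d(X)$, so $P_d(X)$ is $\Fin^{\otimes d}$-positive, and $X\notin\Hw$ by the contrapositive of what was just shown.

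\emph{Part (b).} Let $Y=\bigcup_{s\in P}A_s$. Then $T^\infty_Y$ is the downward closure $\mathord\downarrow P$ together with the full cones above the points of $P$. The nodes at level $d$ are exactly those in $P$, since a cell $A_s$ with $s\notin P$ at level $d$ meets $Y$ trivially. Let $U$ be the downward closure of $P$ inside $\omega^{\leq d}$. By Lemma~\ref{lem:persistence}(b), whether $\varnothing\in D^d(T^\infty_Y)$ can depend on nodes above level $d$, so $T^\infty_Y$ cannot be replaced by $U$ directly.

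I see two ways around this. The first is to apply the level-$d$ description from part (a) with $n=0$: membership of $Y$ in $\HH_d$ is decided by the set $\{s\in\omega^d: Y\cap A_s\notin\HH_0(s)\}=\{s:Y\cap A_s\text{ infinite}\}=P$, which is in $\Fin^{\otimes d}$, so $Y\in\HH_d$. The second is to note that $U\cap\omega^d=P\in\Fin^{\otimes d}$, so Lemma~\ref{lem:fronts} gives the same conclusion once one checks that the truncation at level $d$ does not matter; the first route sidesteps that check, so I would use it.

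The only delicate point is the level-$d$ iteration of the Fubini formula. One must check that the exceptional sets at each stage compose into a $\Fin^{\otimes d}$-set in the outermost-first convention; this is a routine induction on $d$ using Theorem~\ref{thm:fubini} at the nodes $s\concat i$.
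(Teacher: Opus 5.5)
Your proposal is correct and is essentially the paper's argument: both parts come from iterating the successor clause of Theorem~\ref{thm:fubini} across $d$ levels. The only difference is organizational. You first prove the exact $d$-fold equivalence $X\in\HH_{n+d}(s)\iff\{t\in\omega^d:X\cap A_{s\concat t}\notin\HH_n(s\concat t)\}\in\Fin^{\otimes d}$ and then read off (a) for arbitrary $n$ and (b) with $n=0$. The paper instead runs two separate inductions on $d$, and in (a) it invokes local homogeneity where you apply the localized formula at every node.
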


\begin{proof}
(a) Let $X\in\HH_m$. If $m=0$, then $X$ is finite and
$P_d(X)=\varnothing$ for every $d\geq1$. Assume $m\geq1$ and induct
on $d$. For $d=1$, the Fubini formula
(Theorem~\ref{thm:fubini}) gives
$X\cap A_{\langle n\rangle}\in\HH_{m-1}(\langle n\rangle)
\subseteq\Hw(\langle n\rangle)$ for all but finitely many $n$, so
$P_1(X)$ is finite. For $d+1$: the sections of $P_{d+1}(X)$ satisfy
$(P_{d+1}(X))_n=P_d(X\cap A_{\langle n\rangle})$, computed inside the
localized partition of $A_{\langle n\rangle}$. For all but finitely
many $n$ we have
$X\cap A_{\langle n\rangle}\in\HH_{m-1}(\langle n\rangle)$, and for
those $n$ the induction hypothesis (via local homogeneity,
Corollary~\ref{cor:homogeneity}) puts the section in
$\Fin^{\otimes d}$; the finitely many exceptional sections are
tolerated by $\Fin\otimes\Fin^{\otimes d}$.

The consequence is the contrapositive: if
$Y=\bigcup_{s\in P}X_s\in\Hw$, then, since
$X_s\subseteq Y\cap A_s$ and ideals are downward closed,
$P\subseteq P_d(Y)\in\Fin^{\otimes d}$.

(b) Induction on $d$. For $d=1$, $P$ is finite and the root of the
trace tree of $Z=\bigcup_{s\in P}A_s$ has finitely many children, so
$Z\in\HH_1$. For $d+1$: all but finitely many sections $P_n$ of $P$
lie in $\Fin^{\otimes d}$, and for those the induction hypothesis
gives
$Z\cap A_{\langle n\rangle}
=\bigcup_{t\in P_n}A_{\langle n\rangle\concat t}
\in\HH_d(\langle n\rangle)$; by the Fubini formula
$Z\in\HH_{d+1}$.
\end{proof}

\begin{remark}\label{rem:Hs}
In this section all local positivity refers to the localized nested
limit $\Hw(s)$, not to the full transfinite ideal $\HH(s)$; the
latter is a larger ideal, so positivity with respect to it is a
stronger condition, and the Laver-tree criterion of
Theorem~\ref{thm:nucleus} characterizes positivity for $\HH(s)$ only.
The two usages are compatible in the proofs below: sets that are
$\HH(s)$-positive (for instance, the full cells $A_s$) are in
particular $\Hw(s)$-positive.
\end{remark}

\begin{lemma}[capture]\label{lem:capture}
For all $\varphi,\eta:\omega\to\omega$ there is $h$ such that
$U_\varphi:=\{s:\varphi[A_s]\ \text{is infinite}\}\subseteq
T^\infty_{W_h}$.
\end{lemma}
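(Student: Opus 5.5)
The plan is to build $h$ by a diagonal construction over the countably many nodes of $U_\varphi$. Recall that $W_h=\{x:\eta(x)\leq h(\varphi(x))\}$. So $s\in T^\infty_{W_h}$ means that infinitely many $x\in A_s$ satisfy $\eta(x)\leq h(\varphi(x))$. For $s\in U_\varphi$ the image $\varphi[A_s]$ is infinite. Hence there are infinitely many values $i\in\varphi[A_s]$, and for each such $i$ we may fix a witness $x_{s,i}\in A_s$ with $\varphi(x_{s,i})=i$. If we arrange $h(i)\geq\eta(x_{s,i})$ for infinitely many of these $i$, then the points $x_{s,i}$ all lie in $W_h\cap A_s$. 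They are pairwise distinct, since they have distinct $\varphi$-values. This would give $s\in T^\infty_{W_h}$.

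To do this for all nodes at once, we enumerate $U_\varphi$ as $\langle s_j:j\in\omega\rangle$; if $U_\varphi$ is finite the argument is easier. We then build an increasing sequence of distinct values $i_0<i_1<\dots$ together with an assignment of nodes. At stage $n$ we handle the node $s_{j(n)}$, where $j(n)$ runs through a bookkeeping enumeration of $\omega$ in which every $j$ appears infinitely often. We choose $i_n\in\varphi[A_{s_{j(n)}}]$ larger than all previously chosen values, which is possible because this image is infinite. We pick the witness $x_n\in A_{s_{j(n)}}$ with $\varphi(x_n)=i_n$ and put $h(i_n)=\eta(x_n)$. At all other arguments we set $h=0$; any value would do. Since the $i_n$ are distinct, $h$ is well defined.

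It remains to verify the inclusion. Fix $s=s_j\in U_\varphi$. There are infinitely many stages $n$ with $j(n)=j$. At each such stage the point $x_n\in A_s$ satisfies $\eta(x_n)=h(i_n)=h(\varphi(x_n))$, so $x_n\in W_h$. These $x_n$ have distinct $\varphi$-values, so they are distinct points. Therefore $W_h\cap A_s$ is infinite, which gives $U_\varphi\subseteq T^\infty_{W_h}$.

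There is no serious obstacle here. The one point needing care is that a single value $i$ may be needed by several nodes, and each node may want a different value of $h(i)$. The bookkeeping avoids this conflict by using fresh values $i_n$ at every stage, so each value of $h$ is set at most once. Alternatively, one could set $h(i)=\max\{\eta(x_{s_j,i}):j\leq i,\ i\in\varphi[A_{s_j}]\}$, a finite maximum. Then every $s_j$ is served at all sufficiently large $i$ in its image, which yields the same conclusion without bookkeeping.
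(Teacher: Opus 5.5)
Your proof is correct and is essentially the paper's argument: enumerate $U_\varphi$ with infinite repetition, at each visit pick a point of $A_s$ with a fresh $\varphi$-value, and set $h(\varphi(x))=\eta(x)$ on the chosen points (and $0$ elsewhere), so that the chosen points lie in $W_h$ and meet each $A_s$ with $s\in U_\varphi$ in an infinite set. Your alternative definition of $h$ by a finite maximum also works, because each $s_j$ is then served at every $i\geq j$ in $\varphi[A_{s_j}]$.
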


\begin{proof}
Enumerate the nodes of $U_\varphi$ with infinite repetition. At each
visit to $s$, choose a point $x\in A_s$ whose value $\varphi(x)$ has
not appeared in any earlier choice; this is possible because
$\varphi[A_s]$ is infinite. Let $Y$ be the set of chosen points; then
$\varphi$ is injective on $Y$ and $Y\cap A_s$ is infinite for every
$s\in U_\varphi$. Define $h(\varphi(y))=\eta(y)$ for $y\in Y$
(well defined by injectivity) and $h=0$ on all other values. Then
$Y\subseteq W_h$, so
$U_\varphi\subseteq T^\infty_Y\subseteq T^\infty_{W_h}$.
\end{proof}

\begin{definition}[essential depth]\label{def:depth}
For $\varphi:\omega\to\omega$,
\[
 \delta(\varphi)=\min\bigl\{d\geq1:\exists Z\in\Hw\ \forall
 s\in\omega^d\ \ \varphi[A_s\setminus Z]\ \text{is finite}\bigr\},
\]
with $\delta(\varphi)=\infty$ if no such $d$ exists. Essential depth
is invariant under modification of $\varphi$ on a set in $\Hw$: a
modification on $Z_0\in\Hw$ replaces a witness $Z$ by
$Z\cup Z_0$.
\end{definition}

\begin{remark}
The restriction $d\geq1$ excludes only a degenerate case: if
$\varphi$ is an outer coordinate of a witness of
$\Fin^{\otimes2}\leq_K\Hw$ and $\varphi$ had finite range off some
$Z\in\Hw$, then $\omega$ would be the union of $Z$ with finitely many
fibres of $\varphi$, all of which lie in $\Hw$, contradicting
properness.
\end{remark}

\begin{corollary}[finiteness of the depth]\label{cor:depthfinite}
If $(\varphi,\eta)$ witnesses $\Fin^{\otimes2}\leq_K\Hw$ and all its
graph tests lie in $\HH_m$ with $m\geq1$, then
$\delta(\varphi)\leq m$. In particular $\delta(\varphi)<\omega$.
\end{corollary}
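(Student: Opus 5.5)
The plan is to take $d=m$ in Definition~\ref{def:depth} and build the witness $Z$ directly. The natural candidate is the union of all level-$m$ cells on which $\varphi$ takes infinitely many values. Let
\[
 U_\varphi=\{s:\varphi[A_s]\ \text{is infinite}\}
\]
be the set from Lemma~\ref{lem:capture}, and put $P=U_\varphi\cap\omega^m$ and $Z=\bigcup_{s\in P}A_s$. If $Z\in\Hw$, we are done: for $s\in\omega^m\setminus P$ the set $\varphi[A_s]$ is finite by definition, and for $s\in P$ we have $A_s\setminus Z=\varnothing$. So $\varphi[A_s\setminus Z]$ is finite for every $s\in\omega^m$, which gives $\delta(\varphi)\leq m$.

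To get $Z\in\Hw$, it suffices by Lemma~\ref{lem:localglobal}(b) to show $P\in\Fin^{\otimes m}$; that lemma then gives $Z\in\HH_m\subseteq\Hw$. I will obtain this from the fronts lemma, in four steps.

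First, $U_\varphi$ is downward closed: if $t\subseteq s$ then $A_s\subseteq A_t$, so $\varphi[A_t]\supseteq\varphi[A_s]$ is infinite whenever $\varphi[A_s]$ is.

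Second, Lemma~\ref{lem:capture} yields an $h$ with $U_\varphi\subseteq T^\infty_{W_h}$.

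Third, the hypothesis gives $W_h\in\HH_m$, i.e.\ $\varnothing\notin D^m_{\Fin}(T^\infty_{W_h})$. Monotonicity (Lemma~\ref{lem:persistence}(a)) then gives $\varnothing\notin D^m_{\Fin}(U_\varphi)$.

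Fourth, since $U_\varphi$ is downward closed and $m\geq1$, Lemma~\ref{lem:fronts} converts this into $P=U_\varphi\cap\omega^m\in\Fin^{\otimes m}$.

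For the ``in particular'' clause, Lemma~\ref{lem:uniformization} supplies some $m<\omega$ with every $W_h\in\HH_m$. Replacing $m$ by $\max(m,1)$ is harmless, since $\HH_0\subseteq\HH_1$. The first part then gives $\delta(\varphi)\leq m<\omega$.

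I do not expect a serious obstacle. The only points needing care are that the fronts lemma requires a downward-closed tree, which is why I pass to $U_\varphi$ rather than the possibly larger $T^\infty_{W_h}$, and that it requires $m\geq1$, which the hypothesis guarantees. The real work is already contained in the capture lemma, which converts infinitude of $\varphi$ on cells into infinitude of a single graph test on those cells.
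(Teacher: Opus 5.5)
Your proof is correct and follows the paper's argument step for step: the capture lemma, monotonicity of the derivative, the fronts lemma applied to the downward-closed tree $U_\varphi$, and Lemma~\ref{lem:localglobal}(b) to place $Z$ in $\HH_m$. The only difference is in the ``in particular'' clause, where you invoke Lemma~\ref{lem:uniformization}; the paper instead observes that the argument uses only the single capture test $W_h$, so it suffices to pick $m\geq1$ with $W_h\in\HH_m$. Both routes are valid, and the paper's is slightly more economical.
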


\begin{proof}
Take the $h$ given by Lemma~\ref{lem:capture} for the pair
$(\varphi,\eta)$. Since $W_h\in\HH_m$, the root does not survive $m$
derivatives of $T^\infty_{W_h}$; by
Lemma~\ref{lem:persistence}(a) and
$U_\varphi\subseteq T^\infty_{W_h}$, the root does not survive $m$
derivatives of the downward closed tree $U_\varphi$ either (note
$\varphi[A_{s\restr j}]\supseteq\varphi[A_s]$, so $U_\varphi$ is
indeed a tree). By Lemma~\ref{lem:fronts},
$P=U_\varphi\cap\omega^m\in\Fin^{\otimes m}$, and by
Lemma~\ref{lem:localglobal}(b),
$Z=\bigcup_{s\in P}A_s\in\HH_m\subseteq\Hw$. For $s\in\omega^m$: if
$s\in P$ then $A_s\setminus Z=\varnothing$; if $s\notin P$ then
$\varphi[A_s]$ is finite. Hence $Z$ witnesses
$\delta(\varphi)\leq m$. For the last clause: if each $W_h$ is merely
in $\Hw$, apply the argument to the single $h$ of
Lemma~\ref{lem:capture}, choosing $m\geq 1$ with $W_h\in\HH_m$.
\end{proof}

\subsection{Monotonicity and dimension}

\begin{theorem}[monotonicity of essential depth]\label{thm:monotone}
If $(\varphi,\eta)$ witnesses $\Fin^{\otimes2}\leq_K\Hw$ and
$\delta(\eta)<\omega$, then
\[
 \delta(\varphi)\leq\delta(\eta).
\]
\end{theorem}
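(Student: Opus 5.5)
The plan is to argue by contradiction against the graph tests of $(\varphi,\eta)$. Put $d=\delta(\eta)<\omega$ and fix $Z\in\Hw$ witnessing it, so that $\eta[A_s\setminus Z]$ is finite for every $s\in\omega^d$; let $c_s=\max\eta[A_s\setminus Z]$. The target is a $\Fin^{\otimes d}$-small set of nodes $P\subseteq\omega^d$ together with a set $Z'\in\Hw$ such that $Z\cup Z'\cup\bigcup_{s\in P}A_s$ witnesses $\delta(\varphi)\le d$. By Lemma~\ref{lem:localglobal}(b), $\bigcup_{s\in P}A_s\in\HH_d\subseteq\Hw$ whenever $P\in\Fin^{\otimes d}$, so the whole task is to bound the set of nodes where $\varphi$ is essentially infinite.

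\emph{Step 1 (one graph test absorbing $\eta$).} Enumerate $\omega^d$ as $\langle s_i:i\in\omega\rangle$ and put $h(i)=\max\{c_{s_j}:j\le i\}$. Take $x\in A_{s_i}\setminus Z$ with $\varphi(x)\ge i$. Then $\eta(x)\le c_{s_i}\le h(\varphi(x))$, so $x\in W_h$. Hence
\[
 W_h\cap A_{s_i}\ \supseteq\ (A_{s_i}\setminus Z)\setminus F_i,
 \qquad F_i=\varphi^{-1}[\{0,\dots,i-1\}] .
\]
Here $F_i$ is a finite union of fibres of $\varphi$, and each fibre lies in $\Hw$ because it is the preimage of $\{j\}\times\omega\in\Fin^{\otimes2}$. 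Since $W_h\in\Hw$, Lemma~\ref{lem:localglobal}(a) gives $P_d(W_h)\in\Fin^{\otimes d}$.

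\emph{Step 2 (the bad set is small).} Let
\[
 P=\bigl\{s_i\in\omega^d:(A_{s_i}\setminus Z)\setminus F_i\notin\Hw(s_i)\bigr\}.
\]
This is the set of nodes at which $A_s\setminus Z$ is not covered, modulo the local ideal, by the finitely many fibres of $\varphi$ indexed below $i$. By Step~1, $P\subseteq P_d(W_h)$, so $P\in\Fin^{\otimes d}$. For $s_i\notin P$ we have $A_{s_i}\setminus Z\subseteq F_i\cup H_i$ with $H_i\in\Hw(s_i)$, and on $A_{s_i}\setminus(Z\cup H_i)$ the map $\varphi$ takes only the values $<i$. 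It therefore suffices to show that $Z'=\bigcup_{s_i\notin P}H_i$ belongs to $\Hw$ (or can be replaced by such a set). Then $Z\cup Z'\cup\bigcup_{s\in P}A_s\in\Hw$ witnesses $\delta(\varphi)\le d$.

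\emph{Main obstacle: uniformity of the $H_i$.} A countable union of locally small sets need not lie in $\Hw$. Each $H_i$ lies in some level $\HH_{r_i}(s_i)$, and the levels may be unbounded. To control this I would first make the sets $H_i$ canonical, namely $H_i=(A_{s_i}\setminus Z)\setminus F'_i$ for a suitable finite set of fibres, and then run the diagonal argument of Lemma~\ref{lem:uniformization} across the nodes $s_i$.

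If the levels $r_i$ are unbounded on a $\Fin^{\otimes d}$-positive set of nodes, the plan is to enlarge $h$ diagonally, in the style of $h(i)=\max_{n\le i}h_n(i)$. This produces a single graph test $W_{h'}$ whose local traces at those nodes escape every $\HH_n$. That contradicts Lemma~\ref{lem:uniformization}, which puts all graph tests inside one fixed $\HH_m$, through Lemma~\ref{lem:localglobal}(a) applied level by level.

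Once the levels are bounded by a single $m$ off a $\Fin^{\otimes d}$-small set of nodes, the Fubini formula (Theorem~\ref{thm:fubini}) applied $d$ times yields $\bigcup_i H_i\in\HH_{m+d}\subseteq\Hw$. This uniformization step is the one I expect to require real care; the rest is bookkeeping with Lemma~\ref{lem:localglobal}.
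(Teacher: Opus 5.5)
Your Step~1 is exactly the paper's proof. The paper fixes an injection $e:\omega^d\to\omega$ and puts $h(a)=\max\{M_s:e(s)<a\}$. This is your $h(i)=\max\{c_{s_j}:j\le i\}$ with $e(s_i)=i$, up to an index shift. Your check that $\eta(x)\le c_{s_i}\le h(\varphi(x))$ whenever $x\in A_{s_i}\setminus Z$ and $\varphi(x)\ge i$ is the entire argument.

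The gap is that you do not see that this already finishes the proof, and you leave it hanging on a ``main obstacle'' that you only sketch. Your own inclusion gives $A_{s_i}\setminus(Z\cup W_h)\subseteq F_i$, that is,
$\varphi[A_{s_i}\setminus(Z\cup W_h)]\subseteq\{0,\dots,i-1\}$ for every $i$. Since $W_h$ is a graph test of a witness of $\Fin^{\otimes2}\leq_K\Hw$, the set $Z\cup W_h\in\Hw$ witnesses $\delta(\varphi)\le d$. The paper does the same thing, but first redefines $\eta$ as $0$ on $Z$, so that $W_h$ alone is the witness.

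Step~2 and the uniformization are therefore superfluous, and as written they do not close the argument. You reduce to showing $\bigcup_{s_i\notin P}H_i\in\Hw$ for arbitrary locally small $H_i$, which, as you note, fails in general. Your repair is never carried out: diagonalizing $h$, bounding the levels $r_i$ off a $\Fin^{\otimes d}$-small set, and invoking Lemma~\ref{lem:localglobal}(a) ``level by level.'' It also cites the wrong tool: Lemma~\ref{lem:localglobal}(a) concerns $\Hw(s)$-positivity, not membership in the finite levels $\HH_n(s)$, so the claimed contradiction is not derived.

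The fix is the ``canonical'' choice you gesture at: take $H_i=(A_{s_i}\setminus Z)\setminus F_i$. Step~1 gives $H_i\subseteq W_h$ for all $i$, so $\bigcup_iH_i\subseteq W_h\in\Hw$. No uniformity issue arises, and none of $P$, Lemma~\ref{lem:localglobal}, or Lemma~\ref{lem:uniformization} is needed.
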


\begin{proof}
Let $d=\delta(\eta)$ and let $Z_\eta\in\Hw$ satisfy:
$\eta[A_s\setminus Z_\eta]$ is finite for every $s\in\omega^d$.
Redefining $\eta$ as $0$ on $Z_\eta$ preserves the reduction (any new
preimage differs from the old one within $Z_\eta$) and does not
change either essential depth, so we may assume that $\eta[A_s]$ is
finite for every $s\in\omega^d$. Write
\[
 M_s=\max\eta[A_s]\qquad(s\in\omega^d),
\]
fix an injection $e:\omega^d\to\omega$, and define
\[
 h(a)=\max\bigl(\{M_s:e(s)<a\}\cup\{0\}\bigr),
\]
a finite maximum since $e$ is injective. We claim that the single
graph test $W_h$ witnesses $\delta(\varphi)\leq d$: for every
$s\in\omega^d$,
\[
 \varphi[A_s\setminus W_h]\subseteq[0,e(s)].
\]
Indeed, if $x\in A_s$ and $\varphi(x)>e(s)$, then $M_s$ enters the
maximum defining $h(\varphi(x))$, so
$\eta(x)\leq M_s\leq h(\varphi(x))$ and $x\in W_h$. Since
$W_h\in\Hw$, the set $Z=W_h$ is the required witness.
\end{proof}

The point of the diagonal $e$ is that all the local residues are
absorbed by a \emph{single} graph test: no cell-by-cell analysis of
the fibres of $\varphi$ is needed.

\begin{theorem}[dimension]\label{thm:dimension}
Let $\Psi=(\psi_r,\dots,\psi_1):\omega\to\omega^r$ witness
$\Fin^{\otimes r}\leq_K\Hw$. If $\delta(\psi_i)\leq d$ for all
$i\leq r$, then $r\leq d$.
\end{theorem}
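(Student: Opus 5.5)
The plan is to compress the reduction $\Psi$ into a Kat\v etov reduction $\Fin^{\otimes r}\leq_K\Fin^{\otimes d}$ and then invoke the rank obstruction, as in Corollary~\ref{cor:chain}. Let $Z_i\in\Hw$ witness $\delta(\psi_i)\leq d$. Since $\delta(\psi_i)$ may be smaller than $d$, I first note that a witness at a smaller depth is also one at depth $d$, because the cells at level $d$ refine those at smaller levels. Put $Z=Z_1\cup\dots\cup Z_r\in\Hw$. Then for every $s\in\omega^d$ each $\psi_i[A_s\setminus Z]$ is finite, so $\Psi[A_s\setminus Z]$ is a finite subset of $\omega^r$.

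Next I separate out the cells where $Z$ is locally large. By Lemma~\ref{lem:localglobal}(a), $Q:=P_d(Z)\in\Fin^{\otimes d}$. For $s\in\omega^d\setminus Q$ we have $Z\cap A_s\in\Hw(s)$. Since $A_s\notin\Hw(s)$ (Remark~\ref{rem:Hs} together with local homogeneity), it follows that $A_s\setminus Z\notin\Hw(s)$. Now $A_s\setminus Z$ is the union of the finitely many fibres $(A_s\setminus Z)\cap\Psi^{-1}\{y\}$ with $y\in\Psi[A_s\setminus Z]$. Hence some value $y_s\in\omega^r$ has
\[
 X_s:=(A_s\setminus Z)\cap\Psi^{-1}\{y_s\}\notin\Hw(s).
\]
I define $g:\omega^d\to\omega^r$ by $g(s)=y_s$ for $s\notin Q$, and arbitrarily on $Q$.

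The main step is to check that $g$ witnesses $\Fin^{\otimes r}\leq_K\Fin^{\otimes d}$. Let $B\in\Fin^{\otimes r}$ and suppose, towards a contradiction, that $g^{-1}[B]\notin\Fin^{\otimes d}$. Then $P=g^{-1}[B]\setminus Q$ is still $\Fin^{\otimes d}$-positive, and $X_s\notin\Hw(s)$ for every $s\in P$. The second clause of Lemma~\ref{lem:localglobal}(a) then gives $\bigcup_{s\in P}X_s\notin\Hw$. On the other hand, $X_s\subseteq\Psi^{-1}\{y_s\}\subseteq\Psi^{-1}[B]$ for every $s\in P$. Since $\Psi$ is a Kat\v etov witness, $\Psi^{-1}[B]\in\Hw$, which is a contradiction.

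Finally, by \ref{K:kat} the reduction $\Fin^{\otimes r}\leq_K\Fin^{\otimes d}$ upgrades to a copy $\Fin^{\otimes r}\sqsubseteq\Fin^{\otimes d}$, and \ref{K:rank} gives $r\leq d$. I expect the delicate point to be the choice of a single positive fibre value $y_s$ per cell. One cannot simply use the whole finite set $\Psi[A_s\setminus Z]$, because positivity of $A_s\setminus Z$ need not pass to the portion landing in $B$. Choosing one positive fibre is exactly what makes the preimage argument go through, and it needs only finite additivity of the local ideals $\Hw(s)$.
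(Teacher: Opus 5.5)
Your proof is correct and follows essentially the paper's route. You pick one $\Hw(s)$-positive fibre of $\Psi$ in each level-$d$ cell, use Lemma~\ref{lem:localglobal}(a) to show that the resulting map $\omega^d\to\omega^r$ witnesses $\Fin^{\otimes r}\leq_K\Fin^{\otimes d}$, and conclude by the rank obstruction (\ref{K:kat} and \ref{K:rank}, as in Corollary~\ref{cor:chain}). The only difference is cosmetic: the paper redefines each $\psi_i$ as $0$ on the exceptional set $Z$ so that $\Psi[A_s]$ itself is finite, whereas you excise $Z$ and discard the $\Fin^{\otimes d}$-small set of cells $P_d(Z)$, which is equally valid.
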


\begin{proof}
Normalize the $r$ coordinates simultaneously: the union of their $r$
witnesses lies in $\Hw$, and redefining each $\psi_i$ as $0$ there
preserves the reduction, so we may assume $\Psi[A_s]$ is finite for
every $s\in\omega^d$.

For each $s\in\omega^d$, the cell $A_s$ is the full local space, so
$A_s\notin\Hw(s)$; since $A_s$ is a finite union of fibres of $\Psi$,
some value $p(s)\in\Psi[A_s]$ has
\[
 P_s=A_s\cap\Psi^{-1}\{p(s)\}\notin\Hw(s).
\]
The function $p:\omega^d\to\omega^r$ witnesses
$\Fin^{\otimes r}\leq_K\Fin^{\otimes d}$: if $B\in\Fin^{\otimes r}$
and $p^{-1}[B]\notin\Fin^{\otimes d}$, then
Lemma~\ref{lem:localglobal}(a) makes
$\bigcup_{s\in p^{-1}[B]}P_s$ $\Hw$-positive, yet this union is
contained in $\Psi^{-1}[B]\in\Hw$, a contradiction.

Finally, $\Fin^{\otimes r}\leq_K\Fin^{\otimes d}$ implies $r\leq d$:
if $r>d$, then composing the upward witnesses of
Corollary~\ref{cor:chain} (through the isomorphisms of
Theorem~\ref{thm:fubini}) gives
$\Fin^{\otimes(d+1)}\leq_K\Fin^{\otimes r}\leq_K\Fin^{\otimes d}$,
contradicting the strictness in Corollary~\ref{cor:chain}.
\end{proof}

\subsection{Non-extension and the main theorem}

\begin{theorem}[sharp non-extension]\label{thm:nonext}
Let $(\xi_N,\dots,\xi_1)$ be such that every window $q_k$ $(k\leq N)$
witnesses $\Fin^{\otimes k}\leq_K\Hw$, and suppose all graph tests of
the pair $(\xi_2,\xi_1)$ lie in $\HH_m$, $m\geq1$. Then
\[
 N\leq m+1 .
\]
In particular the pair $(\xi_2,\xi_1)$ does not extend to a coherent
window of length $m+2$.
\end{theorem}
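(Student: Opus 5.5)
The plan is to chain three earlier results: Corollary~\ref{cor:depthfinite} to bound the depth of $\xi_2$, Theorem~\ref{thm:monotone} to push that bound up the tower, and Theorem~\ref{thm:dimension} to turn a uniform depth bound into a dimension bound. The coordinate $\xi_1$ is deliberately left out, since nothing controls its depth. This omission is exactly where the ``$+1$'' in $N\leq m+1$ comes from.

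If $N=1$ there is nothing to prove, since $m\geq1$. Assume $N\geq2$. The pair $(\xi_2,\xi_1)$ is the window $q_2$, so it witnesses $\Fin^{\otimes2}\leq_K\Hw$. All its graph tests lie in $\HH_m$ with $m\geq1$, so Corollary~\ref{cor:depthfinite} gives $\delta(\xi_2)\leq m$.

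Next I propagate the bound upward. For each $2\leq j<N$, Lemma~\ref{lem:subtuples} applied to the window $q_N$ (or $q_{j+1}$) with indices $j<j+1$ shows that the pair $(\xi_{j+1},\xi_j)$ witnesses $\Fin^{\otimes2}\leq_K\Hw$, with outer coordinate $\xi_{j+1}$. I induct on $j$. Suppose $\delta(\xi_j)\leq m<\omega$. Then the finiteness hypothesis of Theorem~\ref{thm:monotone} holds for the pair $(\xi_{j+1},\xi_j)$, and it yields $\delta(\xi_{j+1})\leq\delta(\xi_j)\leq m$. Hence
\[
\delta(\xi_N)\leq\delta(\xi_{N-1})\leq\cdots\leq\delta(\xi_2)\leq m .
\]
The induction must run upward from $j=2$, because Theorem~\ref{thm:monotone} needs the inner coordinate to have finite depth before it says anything. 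This is why the chain cannot start at $\xi_1$.

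To finish, apply Lemma~\ref{lem:subtuples} once more with indices $2<3<\dots<N$. It shows that $\Psi=(\xi_N,\dots,\xi_2)$ witnesses $\Fin^{\otimes(N-1)}\leq_K\Hw$. Every coordinate of $\Psi$ has essential depth at most $d=m$. Theorem~\ref{thm:dimension}, with $r=N-1$ and $d=m$, then gives $N-1\leq m$, that is, $N\leq m+1$. The ``in particular'' clause follows at once: a coherent window of length $m+2$ extending $(\xi_2,\xi_1)$ would violate this bound.

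I do not expect a real obstacle, since the heavy lifting sits in the cited results. The points that need care are bookkeeping. The subtuple lemma must be applied with the correct orientation, so that $\xi_{j+1}$ is the outer coordinate, which is what Theorem~\ref{thm:monotone} requires. The monotonicity hypothesis $\delta(\eta)<\omega$ must be verified inductively rather than assumed. Finally, the conclusion $N-1\leq m$ must not be overclaimed as $N\leq m$, since $\xi_1$ genuinely escapes the depth control.
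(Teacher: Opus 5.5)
Your proof is correct and follows the paper's argument. You use Corollary~\ref{cor:depthfinite} to get $\delta(\xi_2)\leq m$, then Theorem~\ref{thm:monotone} to propagate the bound, then Theorem~\ref{thm:dimension} applied to $(\xi_N,\dots,\xi_2)$ to obtain $N-1\leq m$. The only difference is that the paper applies monotonicity directly to the pairs $(\xi_j,\xi_2)$ for $3\leq j\leq N$ (Lemma~\ref{lem:subtuples} with $j_1=2<j_2=j$), so the finiteness hypothesis $\delta(\xi_2)<\omega$ is already available and no induction along consecutive pairs is needed.
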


\begin{proof}
By Corollary~\ref{cor:depthfinite} applied to the window $q_2$,
\[
 \delta(\xi_2)\leq m .
\]
For each $j$ with $3\leq j\leq N$, the pair $(\xi_j,\xi_2)$ witnesses
$\Fin^{\otimes2}\leq_K\Hw$ (Lemma~\ref{lem:subtuples} with
$j_1=2<j_2=j$), so Theorem~\ref{thm:monotone} gives
\[
 \delta(\xi_j)\leq\delta(\xi_2)\leq m .
\]
Thus $\delta(\xi_j)\leq m$ for all $2\leq j\leq N$. The subtuple
$Q=(\xi_N,\dots,\xi_2)$, of length $N-1$, witnesses
$\Fin^{\otimes(N-1)}\leq_K\Hw$ (Lemma~\ref{lem:subtuples} again), and
all its coordinates have essential depth at most $m$;
Theorem~\ref{thm:dimension} yields $N-1\leq m$.
\end{proof}

\begin{proposition}[calibration: the bound is attained]
\label{prop:calibration}
For $K\geq2$, the canonical fragment
$(\xi_K,\dots,\xi_1)$ of length $K$
(Proposition~\ref{prop:fragments}):
\begin{enumerate}[label=(\alph*)]
 \item every graph test of $(\xi_2,\xi_1)$ lies in $\HH_{K-1}$;
 \item $\delta(\xi_2)=K-1$; consequently $K-1$ is the \emph{least}
 uniform bound $m$ for the graph tests of $(\xi_2,\xi_1)$.
\end{enumerate}
Hence a pair with parameter $m=K-1$ does extend to a coherent window
of length $m+1=K$, and by Theorem~\ref{thm:nonext} not to length
$m+2$: the bound $N(m)=m+2$ is sharp.
\end{proposition}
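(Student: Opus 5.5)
The plan is to compute everything explicitly from the formula for the canonical fragment in Proposition~\ref{prop:fragments}. There $q_K(x)=\bigl(b_x(0),\dots,b_x(K-2),e_{b_x\restr(K-1)}(x)\bigr)$, and the windows are the suffixes $q_k=\pi_{k,K}\circ q_K$. So the coordinates are
\[
 \xi_1(x)=e_{b_x\restr(K-1)}(x),\qquad \xi_j(x)=b_x(K-j)\quad(2\leq j\leq K).
\]
In particular $\xi_2(x)=b_x(K-2)$ reads the branch at level $K-2$.

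\emph{Part (a).} I would observe that a graph test is a preimage under a window: $W_h=q_2^{-1}[G_h]$, where $G_h=\{(a,b):b\leq h(a)\}\in\Fin^{\otimes2}$. Since $q_2=\pi_{2,K}\circ q_K$, Lemma~\ref{lem:insertion} gives $\pi_{2,K}^{-1}[G_h]\in\Fin^{\otimes K}$. The Fubini isomorphism of Theorem~\ref{thm:fubini}, realized by the bijection $q_K$, then places $W_h$ in $\HH_{K-1}$. This is exactly the last line of the proof of Proposition~\ref{prop:fragments}, and it holds uniformly in $h$.

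\emph{Part (b), upper bound.} Since $K\geq2$, we have $m=K-1\geq1$. Corollary~\ref{cor:depthfinite}, applied with this $m$ and part (a), gives $\delta(\xi_2)\leq K-1$.

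\emph{Part (b), lower bound.} This is the main step. Fix $d\leq K-2$ and suppose $Z\in\Hw$ is such that $\xi_2[A_s\setminus Z]$ is finite for every $s\in\omega^d$; I will derive a contradiction.

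1. For each $s\in\omega^d$, let $F_s=\xi_2[A_s\setminus Z]$, a finite set.
2. Consider the set $P$ of nodes $t\in\omega^{K-1}$ such that $t(K-2)\notin F_{t\restr d}$. This uses $d\leq K-2$, so $t\restr d$ is defined and the last coordinate of $t$ is free relative to $s=t\restr d$.
3. For $t\in P$, every $x\in A_t$ has $\xi_2(x)=t(K-2)\notin F_{t\restr d}$. Hence $x\notin A_{t\restr d}\setminus Z$, that is, $A_t\subseteq Z$.
4. $P$ is $\Fin^{\otimes(K-1)}$-positive: all nodes below the last level are unrestricted, and at every node of length $K-2$ the allowed last coordinates form a cofinite set. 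By induction on sections this is not in $\Fin^{\otimes(K-1)}$.
5. Each full cell satisfies $A_t\notin\Hw(t)$ (Remark~\ref{rem:Hs}). Lemma~\ref{lem:localglobal}(a) then gives $\bigcup_{t\in P}A_t\notin\Hw$. But this union is contained in $Z\in\Hw$, a contradiction.

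Hence $\delta(\xi_2)\geq K-1$, and therefore $\delta(\xi_2)=K-1$.

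\emph{Least uniform bound.} Suppose all graph tests of $(\xi_2,\xi_1)$ lie in $\HH_m$. Then $m\geq1$: $\HH_0=\Fin$, while $W_h\supseteq\{x:\xi_1(x)=0\}$ is infinite for every $h$. Corollary~\ref{cor:depthfinite} then yields $K-1=\delta(\xi_2)\leq m$, so $K-1$ is the least such bound.

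\emph{Sharpness.} With $m=K-1$, the pair $(\xi_2,\xi_1)$ extends to the coherent window $(\xi_K,\dots,\xi_1)$ of length $K=m+1$. By Theorem~\ref{thm:nonext}, it does not extend to length $m+2$, so the bound $N(m)=m+2$ is attained.

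The only delicate point is the positivity bookkeeping in steps 2–4 of the lower bound. There one must check that $F_s$ depends only on the length-$d$ prefix, so that the excluded set at each node of length $K-2$ is finite and the surviving last coordinates are indeed cofinite.
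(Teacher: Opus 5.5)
Your proof is correct and is essentially the paper's argument. It uses the same explicit coordinates and the same insertion-lemma computation for (a), and the same key observation that every cell $A_{u\concat v}$ with $v\notin F_{u\restr d}$ lies entirely in $Z$. Minimality is likewise obtained from Corollary~\ref{cor:depthfinite}, and your explicit check that $m\geq1$ supplies a hypothesis of that corollary which the paper leaves tacit.

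There are two minor variations. First, the paper gets $\delta(\xi_2)\leq K-1$ directly: $\xi_2$ is constant on level-$(K-1)$ cells, so $Z=\varnothing$ is a witness, with no need for Corollary~\ref{cor:depthfinite}. Second, where you close the lower bound by $\Fin^{\otimes(K-1)}$-positivity and Lemma~\ref{lem:localglobal}(a), the paper exhibits a Laver tree inside $T^\infty_Z$ and applies Theorem~\ref{thm:nucleus}. That yields the slightly stronger conclusion $Z\notin\HH$.
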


\begin{proof}
Recall the canonical fragment: $\xi_j(x)=b_x(K-j)$ for
$2\leq j\leq K$, and $\xi_1(x)=e_{b_x\restr(K-1)}(x)$.

(a) $W_h=q_K^{-1}[G_h]$ where
$G_h=\{t\in\omega^K:t(K-1)\leq h(t(K-2))\}=\pi^{-1}
[\{(a,b):b\leq h(a)\}]$ for the ordered projection $\pi$ onto the
last two positions. The graph $\{(a,b):b\leq h(a)\}$ has all sections
finite, so it lies in $\Fin^{\otimes2}$, and
Lemma~\ref{lem:insertion} gives $G_h\in\Fin^{\otimes K}$; hence
$W_h\in\HH_{K-1}$, uniformly in $h$.

(b) Upper bound: $\xi_2(x)=b_x(K-2)$ is constant on every cell of
level $K-1$, so $Z=\varnothing$ witnesses
$\delta(\xi_2)\leq K-1$.

Lower bound: suppose $d\leq K-2$ and $Z\in\Hw$ satisfy
$\xi_2[A_s\setminus Z]\subseteq F_s$ with $F_s$ finite, for every
$s\in\omega^d$. Fix any $u\in\omega^{K-2}$ and any
$v\notin F_{u\restr d}$; every $x\in A_{u\concat v}$ has
$\xi_2(x)=b_x(K-2)=v$, so the \emph{entire cell}
$A_{u\concat v}$ is contained in $Z$. Consequently $T^\infty_Z$
contains the tree
\[
 R=\omega^{\leq K-2}\ \cup\
 \{u\concat v\concat w:
 u\in\omega^{K-2},\ v\notin F_{u\restr d},\ w\in\omega^{<\omega}\}:
\]
below every node of level $K-2$ all but finitely many children
survive, with full cones above them, and every node of level $<K-2$
has all its children. Thus $R$ is a ($\Fin$-positive) Laver tree
inside $T^\infty_Z$, so $Z\notin\HH\supseteq\Hw$ by
Theorem~\ref{thm:nucleus} --- contradicting $Z\in\Hw$. Hence
$\delta(\xi_2)\geq K-1$.

Minimality of the bound: any uniform bound $m$ for the graph tests
satisfies $\delta(\xi_2)\leq m$ by
Corollary~\ref{cor:depthfinite}, so $m\geq K-1$, and (a) shows
$m=K-1$ is achieved.
\end{proof}

\begin{theorem}[Main Theorem]\label{thm:main}
$\Fw\not\leq_K\Hw$.
\end{theorem}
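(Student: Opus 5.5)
We argue by contradiction, assuming $\Fw\leq_K\Hw$. By the coordinate criterion (Theorem~\ref{thm:criterion}) this gives a tower $\langle\xi_j:j\geq1\rangle$ over $\Hw$: for every $k$, the window $q_k=(\xi_k,\dots,\xi_1)$ witnesses $\Fin^{\otimes k}\leq_K\Hw$. The goal is then to extract a single finite parameter $m$ from the bottom pair $(\xi_2,\xi_1)$ and feed the whole infinite tower into the sharp non-extension bound of Theorem~\ref{thm:nonext}.

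Concretely, the window $q_2=(\xi_2,\xi_1)$ witnesses $\Fin^{\otimes2}\leq_K\Hw$. The graph uniformization lemma (Lemma~\ref{lem:uniformization}, dimension $2$) then yields $m<\omega$ such that every graph test $W_h$ of $(\xi_2,\xi_1)$ lies in $\HH_m$. We enlarge $m$ to ensure $m\geq1$, which is harmless because the $\HH_n$ increase. Now set $N=m+2$. The initial segment $(\xi_{m+2},\dots,\xi_1)$ of the tower satisfies all hypotheses of Theorem~\ref{thm:nonext}:
\begin{itemize}
\item every window $q_k$ with $k\leq N$ witnesses $\Fin^{\otimes k}\leq_K\Hw$, since these windows are exactly those of the tower;
\item the graph tests of $(\xi_2,\xi_1)$ lie in $\HH_m$ with $m\geq1$.
\end{itemize}
That theorem forces $m+2=N\leq m+1$, a contradiction. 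Hence no tower exists, and $\Fw\not\leq_K\Hw$.

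All the substantive work is already done in the earlier results. Corollary~\ref{cor:depthfinite} gives $\delta(\xi_2)\leq m$. Monotonicity (Theorem~\ref{thm:monotone}) propagates this bound up the tower, via the pairs $(\xi_j,\xi_2)$ supplied by Lemma~\ref{lem:subtuples}. The dimension theorem (Theorem~\ref{thm:dimension}) then caps the length.

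The one step where compactness genuinely fails, and so the main point, is the uniformization. The bound $m$ must be extracted from the pair $(\xi_2,\xi_1)$ alone, independently of $N$, before $N$ is chosen. This is legitimate because Lemma~\ref{lem:uniformization} uses only that $(\xi_2,\xi_1)$ reduces $\Fin^{\otimes2}$ into the increasing union $\Hw$. I would check carefully that the order of quantifiers is right: first fix $m$ from $q_2$, then choose $N=m+2$, whose window exists because the tower is infinite. This order is exactly what is unavailable for the finite fragments of Proposition~\ref{prop:fragments}, where $m$ grows with $N$.
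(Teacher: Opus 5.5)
Your proposal is correct and is essentially the paper's own proof: you obtain a tower from Theorem~\ref{thm:criterion}, a uniform bound $m\geq1$ for the graph tests of $(\xi_2,\xi_1)$ from Lemma~\ref{lem:uniformization}, and a contradiction with Theorem~\ref{thm:nonext} from the window $q_{m+2}$. Your point about the order of quantifiers (fixing $m$ from the pair before choosing $N=m+2$) is exactly the compactness failure the paper emphasizes.
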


\begin{proof}
Suppose $\Fw\leq_K\Hw$. By Theorem~\ref{thm:criterion} there is a
tower $(\xi_j)_{j\geq1}$ over $\Hw$. By
Lemma~\ref{lem:uniformization} there is $m\geq1$ such that all graph
tests of the pair $(\xi_2,\xi_1)$ lie in $\HH_m$. But the window
$q_{m+2}=(\xi_{m+2},\dots,\xi_1)$ is a coherent extension of
$(\xi_2,\xi_1)$ of length $m+2$, all of whose windows witness the
corresponding reductions --- contradicting
Theorem~\ref{thm:nonext}.
\end{proof}

Together with Proposition~\ref{prop:fragments} and
Proposition~\ref{prop:calibration}, the failure is exactly
calibrated: every finite fragment of a tower exists, the canonical
fragment of length $K$ realizes the extremal parameter $m=K-1$, and
no single sequence of coordinates realizes all lengths
simultaneously. This is a failure of compactness, not of any finite
window.

\subsection{Consequences}

\begin{definition}\label{def:trace}
An ideal $\mathcal I$ on $\omega$ is \emph{trace-determined} if
$T^\infty_X=T^\infty_Y$ implies
$(X\in\mathcal I\iff Y\in\mathcal I)$.
\end{definition}

\begin{lemma}[clone partition]\label{lem:clones}
There are a partition $\langle P_x:x\in\omega\rangle$ of $\omega$
into infinite sets and a finite-to-one $d:\omega\to\omega$ such that
$P_x\subseteq A_{b_x\restr d(x)}$ for every $x$.
\end{lemma}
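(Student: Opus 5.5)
We want a partition of $\omega$ into infinite sets $P_x$, with $P_x\subseteq A_{b_x\restr d(x)}$, where $d$ is finite-to-one. Since $x\in A_{b_x\restr n}$ for every $n$, the condition says that $P_x$ lives in a cell around $x$ whose depth $d(x)$ tends to infinity as $x$ runs through $\omega$. The plan is a greedy recursion along the natural enumeration of $\omega$.

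First fix the depth: take $d(x)=x$. This is injective, hence finite-to-one, and the required cell is $A_{b_x\restr x}$, which is infinite and contains $x$. It remains to distribute the points of $\omega$ among infinitely many $P_x$'s so that every point lands in exactly one $P_x$ and each $P_x$ stays inside $A_{b_x\restr x}$.

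Enumerate $\omega\times\omega$ as a sequence of pairs $(x,i)$ in which every pair occurs exactly once. We build the sets $P_x$ in stages; at each stage only finitely many points have been assigned. There are two kinds of tasks, interleaved.
\begin{enumerate}[label=(\roman*)]
 \item \emph{Filling tasks.} For the next pair $(x,i)$, pick the least point of $A_{b_x\restr x}$ not yet assigned and put it into $P_x$. This is always possible because the cell is infinite and only finitely many points are in use. Each $x$ receives infinitely many such tasks, so every $P_x$ ends up infinite.
 \item \emph{Covering tasks.} For the least point $y$ not yet assigned, put $y$ into $P_z$ for some $z$ with $y\in A_{b_z\restr z}$. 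One such $z$ is $z=y$ itself. Alternatively, take any $z\le y$ with $b_z\restr z=b_y\restr z$, which exists because $y\in A_{b_y\restr y}$.
\end{enumerate}

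Alternating the two kinds of task produces a partition: every point is eventually covered by a covering task, and no point is ever assigned twice. The containment $P_x\subseteq A_{b_x\restr x}$ holds by construction for both kinds of task. With $d(x)=x$, the function $d$ is finite-to-one.

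No substantial obstacle is expected. The only care needed is in the covering step, to make sure the point lands in a legal cell. Choosing $z=y$ settles this, since $y\in A_{b_y\restr y}$ trivially. There is therefore no need to track a nonempty pool of candidate indices $z$.
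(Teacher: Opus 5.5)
Your proof is correct and is essentially the paper's argument: a recursion along an enumeration of $\omega\times\omega$ picks fresh points in the infinite cells $A_{b_x\restr d(x)}$ (with $d$ injective), and leftover points are absorbed using $y\in A_{b_y\restr d(y)}$. The only difference is bookkeeping: the paper first builds the disjoint sets $Q_x$ and then distributes the remainder $R$ by setting $P_x=Q_x\cup(\{x\}\cap R)$, whereas you interleave the covering steps into the recursion.
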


\begin{proof}
Fix any finite-to-one $d$ (say, a bijection) and put
$C_x=A_{b_x\restr d(x)}$, an infinite set containing $x$. Enumerate
$\omega\times\omega$ as $\langle(x_j,k_j):j<\omega\rangle$ and choose
recursively $q_j\in C_{x_j}\setminus\{q_i:i<j\}$ (possible: only
finitely many points are used at each stage). The sets
$Q_x=\{q_j:x_j=x\}$ are infinite and pairwise disjoint, with
$Q_x\subseteq C_x$. Distribute the remainder
$R=\omega\setminus\bigcup_xQ_x$ by setting
$P_x=Q_x\cup(\{x\}\cap R)$; since $x\in C_x$, still
$P_x\subseteq C_x$, and $\langle P_x\rangle$ is a partition of
$\omega$ into infinite sets.
\end{proof}

\begin{lemma}[shadow]\label{lem:shadow}
For $A\subseteq\omega$ let
$\sh(A)=\{x:A\cap P_x\neq\varnothing\}$. Then there is
$Z\subseteq A$ with exactly one point in each $P_x$, $x\in\sh(A)$,
such that $T^\infty_Z=T^\infty_{\sh(A)}$. Consequently, if
$\mathcal I$ is trace-determined, then
$A\in\mathcal I\Rightarrow\sh(A)\in\mathcal I$.
\end{lemma}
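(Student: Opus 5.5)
The plan is to build $Z$ by a straightforward choice of one point per clone, and then compare the two trace trees level by level. The only input needed is that $d$ is finite-to-one, which lets the finitely many shallow clones be ignored.

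\emph{Construction.} For each $x\in\sh(A)$ the set $A\cap P_x$ is nonempty, so we choose a point $z_x\in A\cap P_x$, and we put $Z=\{z_x:x\in\sh(A)\}\subseteq A$. Because the $P_x$ are pairwise disjoint, $Z$ meets each $P_x$ with $x\in\sh(A)$ in exactly the one point $z_x$, and it misses $P_x$ for $x\notin\sh(A)$. In particular the map $x\mapsto z_x$ is injective. By Lemma~\ref{lem:clones} we have $z_x\in P_x\subseteq A_{b_x\restr d(x)}$, and therefore
\[
 b_{z_x}\restr d(x)=b_x\restr d(x)\qquad(x\in\sh(A)).
\]

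\emph{Comparison of trace trees.} Fix a node $s$. Since $d$ is finite-to-one, all but finitely many $x$ satisfy $d(x)\geq|s|$. For every such $x$ we get $x\in A_s\iff z_x\in A_s$, because $b_x$ and $b_{z_x}$ agree on the first $|s|$ coordinates.

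Suppose $s\in T^\infty_{\sh(A)}$. Then infinitely many $x\in\sh(A)$ lie in $A_s$. Discarding the finitely many with $d(x)<|s|$, the corresponding points $z_x$ lie in $A_s$. They are pairwise distinct by injectivity, so $Z\cap A_s$ is infinite and $s\in T^\infty_Z$.

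Conversely, suppose $Z\cap A_s$ is infinite. Then infinitely many distinct indices $x$ have $z_x\in A_s$. After discarding those with $d(x)<|s|$, each remaining $x$ lies in $\sh(A)\cap A_s$, so $s\in T^\infty_{\sh(A)}$. This proves $T^\infty_Z=T^\infty_{\sh(A)}$.

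\emph{Consequence.} Let $\mathcal I$ be trace-determined and $A\in\mathcal I$. Then $Z\subseteq A$ gives $Z\in\mathcal I$ by downward closure. Since $T^\infty_Z=T^\infty_{\sh(A)}$, Definition~\ref{def:trace} gives $\sh(A)\in\mathcal I$.

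I expect no real obstacle. The one point needing care is the finite-to-one exception: nodes $s$ deeper than $d(x)$ are not controlled by the clone containment. This is harmless because only finitely many $x$ are exceptional for each fixed $s$, and trace trees only detect infinite intersections.
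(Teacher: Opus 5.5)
Your proof is correct and is essentially the paper's argument. You pick one point $z_x\in A\cap P_x$, get injectivity of $x\mapsto z_x$ from disjointness of the clones, and use that $d$ is finite-to-one, so that for a fixed node $s$ all but finitely many $x$ lie in the same level-$|s|$ cell as $z_x$; you merely write out the two inclusions of trace trees separately, where the paper compresses them into a single bijection argument.
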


\begin{proof}
For $x\in\sh(A)$ pick $z_x\in A\cap P_x$ and put
$Z=\{z_x:x\in\sh(A)\}$; the $z_x$ are distinct because the $P_x$ are
disjoint. Fix a node $s$ of length $m$. If $d(x)\geq m$, then
$z_x\in P_x\subseteq A_{b_x\restr d(x)}\subseteq A_{b_x\restr m}$, so
$x$ and $z_x$ lie in the same level-$m$ cell. Since
$E_m=\{x:d(x)<m\}$ is finite, the bijection $x\mapsto z_x$ shows
$|\sh(A)\cap A_s|=\omega$ iff $|Z\cap A_s|=\omega$ up to finitely
many exceptions, so
$T^\infty_Z=T^\infty_{\sh(A)}$. For the consequence: $Z\subseteq A$
gives $Z\in\mathcal I$, and equality of traces gives
$\sh(A)\in\mathcal I$.
\end{proof}

\begin{theorem}[$\mathrm{Kat}$ property of trace ideals]
\label{thm:katH}
Every tall trace-determined ideal on $\omega$ has property
$\mathrm{Kat}$. In particular $\Hw$, each $\HH^{\mathcal J}_\alpha$
with $\alpha\geq1$ (for every proper $\mathcal J\supseteq\Fin$), the
full ideal $\HH^{\mathcal J}$, and $\HH$ have property
$\mathrm{Kat}$.
\end{theorem}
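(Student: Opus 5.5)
Given a Katětov witness $f:\Omega\to\omega$ for $\mathcal I\leq_K\mathcal J$, where $\mathcal I$ is tall and trace-determined on $\omega$ and $\mathcal J$ is an ideal on a countable $\Omega$, I will produce an injection $g:\omega\to\Omega$ with $g[A]\in\mathcal J$ for every $A\in\mathcal I$. By the usual convention an injective witness with this property yields a copy $\mathcal I\sqsubseteq\mathcal J$. If the convention demands a bijection, I will absorb the complement of the range into the ideal or use a standard back-and-forth padding.

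The clone partition of Lemma~\ref{lem:clones} is the device that makes injectivity cheap. The naive plan would pick $g(x)\in f^{-1}\{x\}$, but the fibres can be empty or small. So I will work with the clones $P_x$ instead and pull back along them.

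\textbf{Step 1: normalize $f$.} First replace $\Omega$ by the set $\Omega'=\{y\in\Omega : |f^{-1}\{f(y)\}|\text{ is infinite or } f(y)\in \mathrm{ran}\}$, and more simply compose $f$ with the map $\rho:\omega\to\omega$ sending each point of $P_x$ to $x$. Concretely, let $\rho(z)=x$ for $z\in P_x$. Then $\rho^{-1}[A]=\bigcup_{x\in A}P_x$. I will show that $\rho$ witnesses $\mathcal I\leq_K\mathcal I$ in the strong sense that $\sh(A)=\rho[A]$ lies in $\mathcal I$ whenever $A\in\mathcal I$, which is exactly Lemma~\ref{lem:shadow}.

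\textbf{Step 2: choose points.} Tallness is used to make $f$ finite-to-one on a large set, or to show that $f[\Omega]$ is $\mathcal I$-positive everywhere. Since $\omega\setminus f[\Omega]$ has $f$-preimage $\varnothing\in\mathcal J$, that complement must lie in $\mathcal I$. Similarly, any infinite set of points with finite fibres contains an infinite subset $A\in\mathcal I$ by tallness, so $f^{-1}[A]\in\mathcal J$. Let $F$ be the set of $x$ whose fibre $f^{-1}\{x\}$ is nonempty. I will then define the copy via $f$ itself: enumerate $\omega$ and, for each $x$, injectively assign $g(x)\in f^{-1}\{\rho'(x)\}$, where $\rho'$ is a map sending $x$ into a point $y$ with $x\in P_y$ and nonempty fibre.

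The key construction is to choose, recursively along an enumeration, for each $z\in\omega$ a target $u(z)$ such that three conditions hold. First, $z\in P_{u'}$ with $u(z)$ in the same deep cell as $z$, using $P_x\subseteq A_{b_x\restr d(x)}$. Second, $f^{-1}\{u(z)\}$ contains an unused point. Third, the map $z\mapsto u(z)$ changes traces only finitely.

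For the verification, if $A\in\mathcal I$, then the set $u[A]$ has the same trace as $A$, since $u$ moves points only within cells of growing depth, as in the shadow lemma. Trace-determinacy then gives $u[A]\in\mathcal I$, and hence $g[A]\subseteq f^{-1}[u[A]]\in\mathcal J$.

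\textbf{Main obstacle.} The hardest step is guaranteeing enough unused fibre points so that $g$ is injective. A single $x$ may have only finitely many preimages while infinitely many $z$ want to map to it. My plan is to let $u$ be finite-to-one, which keeps the trace preserved as in Lemma~\ref{lem:shadow}. To handle the demand for targets, I will show that the set of $x$ with infinite fibre, or more precisely the set of "available" targets, is positive in every cell $A_s$ with $s\in T^\infty$ of positive sets. This needs tallness: a set of targets whose fibres are all finite and which is avoided would be in $\mathcal I$.

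Finally, the specific ideals listed in the statement are trace-determined by definition, since membership depends only on $T^\infty_X$. They are tall by Theorem~\ref{thm:hierarchy} and Proposition~\ref{prop:nestedlimit}.
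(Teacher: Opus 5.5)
There is a genuine gap, and it comes from the direction of the map you build. You aim for an injection $g:\omega\to\Omega$ with $g[A]\in\mathcal J$ for $A\in\mathcal I$, where $g(z)\in f^{-1}\{u(z)\}$ and $u$ is finite-to-one. Unless $g$ is onto $\Omega$, this condition carries no information. It holds for \emph{every} $A\subseteq\omega$ as soon as $\operatorname{ran}g\in\mathcal J$, and with $u$ finite-to-one that is the typical case.

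For instance, let $\Omega=\omega\times\omega$, $\mathcal J=\{X:\{x:X_x\text{ infinite}\}\in\mathcal I\}$ (that is, $\mathcal I\otimes\Fin$), and let $f$ be the first projection, which is a Kat\v etov witness. Every section $\{n:(x,n)\in\operatorname{ran}g\}$ has at most $|u^{-1}\{x\}|<\omega$ points. So $\operatorname{ran}g\in\mathcal J$, and $\Omega\setminus\operatorname{ran}g$ lies in the dual filter. The padding you postpone (``absorb the complement of the range into the ideal'') is therefore impossible in general, and it is exactly where the whole content of the theorem would sit.

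Step~2 has a second, independent error. The fact that $f^{-1}[\omega\setminus f[\Omega]]=\varnothing\in\mathcal J$ does \emph{not} put $\omega\setminus f[\Omega]$ in $\mathcal I$: a Kat\v etov witness only transfers smallness from $\mathcal I$ to $\mathcal J$. Take $\mathcal I=\HH$, $\Omega=\bigcup_{n\text{ even}}A_{\langle n\rangle}$, $\mathcal J=\{Y\subseteq\Omega:Y\in\HH\}$ and $f$ the inclusion. The range misses every odd cone. So for all but finitely many $z$ in an odd cone, no target $u(z)$ with nonempty fibre exists in a deep cell around $z$, and your trace-preservation argument for $u[A]$ has nothing to act on. Your ``main obstacle'' (too few fibre points) is likewise an artifact of the reversed direction.

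The fix is to build the map in the same direction as $f$, on all of $\Omega$; this is the paper's proof.
\begin{itemize}
\item Fix bijections $\pi_x:\omega\to P_x$ and injective enumerations $e_x$ of the fibres $f^{-1}\{x\}$, and set $g(t)=\pi_{f(t)}(e_{f(t)}(t))$. Each countable fibre is spread injectively into its infinite clone, so injectivity costs nothing.
\item If $g(t)\in A$, then $A\cap P_{f(t)}\neq\varnothing$, i.e.\ $f(t)\in\sh(A)$. Hence $g^{-1}[A]\subseteq f^{-1}[\sh(A)]\in\mathcal J$ by Lemma~\ref{lem:shadow}.
\item Tallness enters only for surjectivity. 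Pick an infinite $C\in\mathcal I$ inside $\operatorname{ran}g$; then $E=g^{-1}[C]\in\mathcal J$. Redefine $g$ on $E$ as a bijection onto $C\cup(\omega\setminus\operatorname{ran}g)$. Preimages of members of $\mathcal I$ then grow only by $E$.
\end{itemize}
The leftover points now sit on the $\mathcal I$-side and are pulled back into a $\mathcal J$-small set, which is why padding works in this direction. The inverse of this bijection is the kind of map you wanted, but it sends the points of $\omega\setminus\operatorname{ran}g$ to arbitrary points of $E$, not to trace-preserving targets. Your final paragraph on the particular ideals is fine.
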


\begin{proof}
Let $\mathcal I$ be tall and trace-determined, and let
$f:\operatorname{dom}(\mathcal K)\to\omega$ witness
$\mathcal I\leq_K\mathcal K$. Fix bijections $\pi_x:\omega\to P_x$
and, for each $x$, an injective enumeration $e_x$ of the fibre
$f^{-1}\{x\}$. Define
\[
 g(t)=\pi_{f(t)}\bigl(e_{f(t)}(t)\bigr).
\]
Then $g$ is injective, and for $A\in\mathcal I$: if $g(t)\in A$ then
$A\cap P_{f(t)}\neq\varnothing$, i.e.\ $f(t)\in\sh(A)$; by
Lemma~\ref{lem:shadow}, $\sh(A)\in\mathcal I$, so
\[
 g^{-1}[A]\subseteq f^{-1}[\sh(A)]\in\mathcal K .
\]
Thus $g$ is an injective reduction. To make it bijective: since
$\mathcal I$ is tall and $g[\operatorname{dom}(\mathcal K)]$ is
infinite, there is an infinite
$C\in\mathcal I$ with
$C\subseteq g[\operatorname{dom}(\mathcal K)]$. The set
$E=g^{-1}[C]$ lies in $\mathcal K$. Choose a bijection
$q:E\to C\cup(\omega\setminus g[\operatorname{dom}(\mathcal K)])$ and
let $h=g$ off $E$ and $h=q$ on $E$; then $h$ is a bijection onto
$\omega$ and, for $A\in\mathcal I$,
$h^{-1}[A]\subseteq g^{-1}[A]\cup E\in\mathcal K$. Hence
$\mathcal I\sqsubseteq\mathcal K$.

The particular cases: membership in each of the ideals listed is
defined from $T^\infty_X$ alone, and each is tall
(Theorem~\ref{thm:hierarchy} for the levels $\alpha\geq1$; the full
ideals and $\Hw$ contain $\HH^{\mathcal J}_1$).
\end{proof}

\begin{corollary}\label{cor:nocopy}
$\Hw$ contains no isomorphic copy of $\Fw$
($\Fw\not\sqsubseteq\Hw$). By \ref{K:katFw}, the two non-embedding
statements $\Fw\not\leq_K\Hw$ and $\Fw\not\sqsubseteq\Hw$ are in fact
equivalent.
\end{corollary}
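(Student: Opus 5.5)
The plan is to derive the corollary directly from the Main Theorem~\ref{thm:main}, using only the definitions of $\sqsubseteq$ and $\leq_K$ recalled in the preliminaries.

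First, $\Fw\not\sqsubseteq\Hw$. An isomorphic copy is a bijective Kat\v etov witness. So if $\Fw\sqsubseteq\Hw$ held, the same bijection would witness $\Fw\leq_K\Hw$, contradicting Theorem~\ref{thm:main}. Hence $\Fw\not\leq_K\Hw$ implies $\Fw\not\sqsubseteq\Hw$, and this direction uses nothing beyond the fact that a bijective witness is in particular a witness.

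Second, the equivalence of the two non-embedding statements. The forward implication is the one just given. For the converse, suppose $\Fw\not\sqsubseteq\Hw$ and, towards a contradiction, $\Fw\leq_K\Hw$. By \ref{K:katFw}, $\Fw$ has property $\mathrm{Kat}$, so $\Fw\leq_K\Hw$ upgrades to $\Fw\sqsubseteq\Hw$, which is a contradiction. The only point to check is that $\Hw$ meets the standing hypotheses under which property $\mathrm{Kat}$ is applied: it must be a proper ideal containing $\Fin$ on a countable set. This holds by Proposition~\ref{prop:nestedlimit} and Theorem~\ref{thm:hierarchy}.

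There is no real obstacle. The only subtlety is bookkeeping: the equivalence is logical, so both statements are true, and the equivalence merely records that, for $\Fw$, Kat\v etov non-reducibility and non-embeddability coincide against any target. The substantive content is entirely in Theorem~\ref{thm:main}. One may also note that Theorem~\ref{thm:katH} (property $\mathrm{Kat}$ of $\Hw$) is not needed here, since it concerns reductions out of $\Hw$ rather than into it.
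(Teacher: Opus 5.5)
Your proposal is correct and follows essentially the same argument as the paper. A bijective witness is a Kat\v etov witness, so Theorem~\ref{thm:main} rules out copies, and conversely \ref{K:katFw} upgrades any Kat\v etov reduction of $\Fw$ to a copy.
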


\begin{proof}
A bijective witness is in particular a Kat\v etov witness, so
Theorem~\ref{thm:main} forbids copies; conversely, by
\ref{K:katFw} any Kat\v etov reduction of $\Fw$ upgrades to a copy.
\end{proof}

\begin{corollary}\label{cor:kwela46}
$\Fw\not\leq_K\Fwp$, and hence $\Fw\not\sqsubseteq\Fwp$.
\end{corollary}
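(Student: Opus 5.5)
The plan is to compose Proposition~\ref{prop:nestedlimit} with the Main Theorem. Suppose, towards a contradiction, that $f$ witnesses $\Fw\leq_K\Fwp$. By Proposition~\ref{prop:nestedlimit} we have $\Fwp\sqsubseteq\Hw$, so there is a bijection $g$ witnessing $\Fwp\leq_K\Hw$. Kat\v etov reductions compose: if $A\in\Fw$ then $f^{-1}[A]\in\Fwp$, and hence $(f\circ g)^{-1}[A]=g^{-1}[f^{-1}[A]]\in\Hw$. Thus $f\circ g$ witnesses $\Fw\leq_K\Hw$, contradicting Theorem~\ref{thm:main}. This proves $\Fw\not\leq_K\Fwp$.

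For the second clause, recall that a bijective witness is in particular a Kat\v etov witness. So $\Fw\sqsubseteq\Fwp$ would give $\Fw\leq_K\Fwp$, which we have just excluded. Alternatively, by \ref{K:katFw} the two statements are equivalent, exactly as in Corollary~\ref{cor:nocopy}.

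The only step needing care is the direction of the maps in the composition, since Kat\v etov witnesses go from the domain of the larger ideal to the domain of the smaller one. We also need that $\Fwp\sqsubseteq\Hw$ is available with the correct orientation. This is supplied by \ref{K:kwela} through Proposition~\ref{prop:nestedlimit}, using $\Fin^{\otimes r}\leq_K\Hw$ for all $r$. There is no genuine obstacle: all the difficulty lies in Theorem~\ref{thm:main}, which we assume.
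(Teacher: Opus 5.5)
Your proposal is correct and is essentially the paper's proof. The paper also combines $\Fwp\sqsubseteq\Hw$ (Proposition~\ref{prop:nestedlimit}) with a putative reduction $\Fw\leq_K\Fwp$, obtains $\Fw\leq_K\Hw$ by transitivity, and contradicts Theorem~\ref{thm:main}; the non-embedding clause then follows because a bijective witness is a Kat\v etov witness.
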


\begin{proof}
$\Fwp\sqsubseteq\Hw$ (Proposition~\ref{prop:nestedlimit}) gives in
particular $\Fwp\leq_K\Hw$; if $\Fw\leq_K\Fwp$, transitivity would
give $\Fw\leq_K\Hw$, contradicting Theorem~\ref{thm:main}. This
conclusion is already a consequence of Kwela's Theorem~4.6
\cite{Kwela} together with the property $\mathrm{Kat}$ of $\Fw$ from
\cite[Proposition~5.1]{Barbarski}; the present argument is an independent route through the nested
limit: it neither computes the exact Borel rank of either limit ideal
nor uses independence of the partitions, and it yields the
Kat\v etov form directly. Its only rank input is the standard
strictness of the finite Fubini powers, used in
Corollary~\ref{cor:chain}.
\end{proof}

\begin{remark}[three amalgamations]\label{rem:threelimits}
Consider the three natural ways of amalgamating all finite Fubini powers
over a single countable set: the canonical (suffix-coherent) limit
$\Fw$, the independent-partitions limit $\Fwp$, and the nested tree
limit $\Hw$. The canonical limit is distinguished from each of the
other two: we have
$\Fwp\sqsubseteq\Hw$ (Proposition~\ref{prop:nestedlimit}), while
$\Fw$ reduces to neither (Theorem~\ref{thm:main},
Corollary~\ref{cor:kwela46}), even though all its finite fragments
are realizable over both. The present results do not distinguish
$\Fwp$ from $\Hw$: the comparison $\Hw\leq_K\Fwp$ remains open, as
does $\Hw\leq_K\Fw$ (Problem~\ref{prob:limits}).
\end{remark}

\section{Chromatic ideals of splitting levels}\label{sec:Gk}

This section applies the machinery to a second family of ideals
canonically attached to the tree partition. The family cannot even be
defined without the partition --- the coloring below is built from
$\Delta$, the separation-level function of the branch dictionary ---
and the derived hierarchy of Section~\ref{sec:DJ} measures it
exactly: the whole family lies inside the second level but not inside
the first,
$\GG_k\subsetneq\HH_2$ and $\GG_k\not\subseteq\HH_1$
(Theorems~\ref{thm:GH2} and~\ref{thm:H1PC}), and the proof of the
upper location runs the derivative itself.

\begin{definition}\label{def:Gk}
For $k\geq2$, $c_k(\{x,y\})=\Delta(x,y)\bmod k$, and
$\GG_k=\langle\HomC(c_k)\rangle$, the ideal generated by the
$c_k$-homogeneous sets.
\end{definition}

By \ref{K:colors}, once $\GG_k$ is proper it is $F_\sigma$ and tall
(properness is part of Theorem~\ref{thm:H1PC}).

\begin{theorem}\label{thm:GH2}
$\GG_k\subseteq\HH_2$ and $\GG_k\not\subseteq\HH_1$ for all $k\geq2$.
\end{theorem}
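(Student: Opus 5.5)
The plan is to work directly with trace trees and the derivative $D_{\Fin}$, and to handle the two claims separately. Since $\HH_2$ is an ideal (Theorem~\ref{thm:hierarchy}), for the inclusion it suffices to show that every $c_k$-homogeneous set lies in $\HH_2$. For the non-inclusion it suffices to exhibit a single homogeneous set outside $\HH_1$.

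\emph{Upper bound.} Let $X$ be $c_k$-homogeneous of colour $i$. The key observation concerns branching. Suppose a node $t$ of length $\ell$ has two distinct children $t\concat a$ and $t\concat b$, both meeting $X$. Then a point of $X\cap A_{t\concat a}$ and a point of $X\cap A_{t\concat b}$ have $\Delta=\ell$, so $\ell\equiv i\pmod k$. Consequently every node $t\in T^\infty_X$ with $|t|\not\equiv i\pmod k$ satisfies $|\suc_{T^\infty_X}(t)|\leq1$. Such a node therefore has finite successor set and is removed by the first derivative. Hence all nodes of $D_{\Fin}(T^\infty_X)$ have length $\equiv i\pmod k$. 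Since $k\geq2$, two consecutive lengths are never both $\equiv i$. Every node of $D_{\Fin}(T^\infty_X)$ thus has no children in $D_{\Fin}(T^\infty_X)$, and its successor set is empty. It follows that $D^2_{\Fin}(T^\infty_X)=\varnothing$, so the root does not survive and $X\in\HH_2$.

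\emph{Lower bound.} I will construct a homogeneous set of colour $0$ whose root survives one derivative. For $n,j\in\omega$, let $u_{n,j}=\langle n\rangle\concat0^{k-1}\concat\langle j\rangle$, which is a node of length $k+1$, and pick $x_{n,j}\in A_{u_{n,j}}$. Put $X=\{x_{n,j}:n,j\in\omega\}$. For $n\neq n'$ we have $\Delta(x_{n,j},x_{n',j'})=0$. For $j\neq j'$ we have $\Delta(x_{n,j},x_{n,j'})=k$. Both values are $\equiv0\pmod k$, so $X$ is homogeneous and lies in $\GG_k$. On the other hand, each section $X\cap A_{\langle n\rangle}$ is infinite. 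So every $\langle n\rangle$ belongs to $T^\infty_X$, the root has successor set $\omega\notin\Fin$, and the root belongs to $D_{\Fin}(T^\infty_X)$. Hence $X\notin\HH_1$.

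I do not expect any step to be a serious obstacle. The only point needing care is the upper bound, specifically the claim that a node of the wrong residue has at most one child in $T_X$, not merely in $T^\infty_X$. This follows from the $\Delta$ computation above, because any two points of $X$ in different children of $t$ would already produce a pair of the forbidden colour.
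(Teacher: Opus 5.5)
Your proof is correct and takes essentially the same route as the paper. The upper bound rests on the same observation: nodes of length $\not\equiv i\pmod k$ have at most one child in the trace tree. You conclude uniformly that $D^2_{\Fin}(T^\infty_X)=\varnothing$, where the paper splits into the cases $i\neq0$ and $i=0$, and your colour-$0$ set with rows below $\langle n\rangle\concat0^{k-1}$ is exactly the paper's counterexample.
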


\begin{proof}
Let $Y$ be homogeneous of color $r$. If $|s|\not\equiv r\pmod k$,
two points of $Y$ lying in distinct children of $s$ would have
separation level $|s|$, which is impossible; hence $Y$ meets at most
one child of each such $s$. If $r\neq0$, this applies to the root, so
$\suc_{T^\infty_Y}(\varnothing)$ has at most one element and the root
dies at the first derivative: $Y\in\HH_1$. If $r=0$, it applies to
every node of length $1$ (as $k\geq2$), so all such nodes die at the
first derivative and the root dies at the second: $Y\in\HH_2$. Since
$\HH_2$ is an ideal containing all generators, $\GG_k\subseteq\HH_2$.

For the second claim: for each $n$ choose a node
$p_n\supseteq\langle n\rangle$ of length $k$ and points
$x_{n,j}\in A_{p_n\concat j}$ ($j\in\omega$), all distinct. The set
$X=\{x_{n,j}:n,j\in\omega\}$ is homogeneous of color $0$: two points
in the same row separate at level $k$, two points in different rows
at level $0$. Yet all first-level sections of $X$ are infinite, so
the root of $T^\infty_X$ has all its children and survives the first
derivative: $X\notin\HH_1$.
\end{proof}

\begin{lemma}[fusion with prescribed levels]\label{lem:fusion}
Let $q\geq2$ and let $L\subseteq\omega$ be infinite such that at least
two residues modulo $q$ occur infinitely often in $L$; let $R$ be the
set of residues occurring infinitely often. There are a pruned tree
$T\subseteq\omega^{<\omega}$ and a countable
$D\subseteq B\cap[T]$ such that: $[T]$ is a Cantor set and $D$ is
dense in $[T]$; the root of $T$ has a single successor and every node
at most two; every splitting of $T$ occurs at a level of $L$; and in
every relative open subset of $[T]$ splittings occur at every residue
class of $R$.
\end{lemma}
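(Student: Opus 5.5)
We build $T$ and $D$ by a fusion recursion, working inside the set $B$ of branches of the partition. Every cylinder $[s]$ meets $B$ in an infinite set, so any finite node can be extended along a branch of $B$ to any prescribed length. We construct finite trees $T_0\subseteq T_1\subseteq\cdots$ together with, for each terminal node $t$ of $T_n$, a designated branch $d_t\in B$ with $t\subseteq d_t$. Extensions of a terminal node are always taken along its designated branch $d_t$, so that $d_t$ stays a branch of the final tree.

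At the start, put $T_0=\{\varnothing\}$ with an arbitrary designated branch $d_\varnothing\in B$. Before splitting anywhere, extend along $d_\varnothing$ to length $1$. This makes the root non-splitting, so it has a single successor.

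The inductive step handles each terminal node $t$ of $T_n$ in turn, following a bookkeeping enumeration that assigns to each stage a residue $r\in R$ to be served. We choose a level $\ell\in L$ with $\ell\equiv r\pmod q$ and $\ell>|t|$; such an $\ell$ exists because $r$ occurs infinitely often in $L$. Let $u=d_t\restr\ell$. Pick some $k\neq d_t(\ell)$, and choose a branch $d'\in B\cap[u\concat k]$, which exists since cylinders meet $B$. The new nodes are the segment from $t$ up to $u$ along $d_t$, the first child $d_t\restr(\ell+1)$ with designated branch $d_t$, and the second child $u\concat k$ with designated branch $d'$. The only splitting node introduced is $u$, at level $\ell\in L$, and it has exactly two children. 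Every other node has at most one successor, so all splittings occur at levels of $L$ and every node has at most two successors.

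Dovetail the enumeration so that each terminal node, and its descendants, are visited infinitely often with every residue in $R$ served for them. Let $T=\bigcup_n T_n$.

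We now verify the required properties. $T$ is pruned, because every node is eventually extended. Every node of $T$ lies below infinitely many later splittings, so $T$ is perfect and $[T]$, a perfect set with at most binary branching, is homeomorphic to Cantor space.

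Let $D=\{d_t\}$ be the set of all designated branches, which is countable. Each $d_t$ lies in $B$, and it lies in $[T]$ because the tree is always extended along it. $D$ is dense in $[T]$ since every node of $T$ is an initial segment of some designated branch.

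Finally, a relative open subset of $[T]$ contains a cone $[s]\cap[T]$ with $s\in T$. Above $s$ there is a terminal node of some stage. Its descendants are served every residue in $R$ infinitely often, so splittings of each residue class of $R$ occur inside that cone.

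The main obstacle is the bookkeeping. The recursion has to ensure simultaneously that every cone receives splittings of every residue in $R$, and that the designated branches stay inside $T$, so that $D\subseteq B\cap[T]$ is dense. To handle this, we fix a bijection $\omega\to\omega\times\omega\times R$ and at stage $n$ treat all terminal nodes with the residue given by the bijection. Cycling through $R$ at every terminal node at every stage is simpler and suffices, since $R$ is finite, being a set of residues modulo $q$. The hypothesis that at least two residues occur infinitely often is not needed for the construction itself; it only ensures the last clause is non-trivial for later use.
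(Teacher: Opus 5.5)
Your proof is correct and is essentially the paper's construction. Both arguments grow countably many branches of $B$ by splitting off a new branch through a cylinder $[(d\restr\ell)\concat v]$ at a level $\ell\in L$ of the prescribed residue, and both take $T$ to be the prefix tree of these branches. The only difference is bookkeeping. The paper commits one branch at a time along an agenda on $\omega\times R$ and uses globally fresh levels to ensure that no node splits twice and the root never splits. You split every leaf of a finite approximation $T_n$ at each stage, and get the same conclusions from the disjointness of the cones above distinct leaves together with your initial extension to length~$1$.
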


\begin{proof}
We build a countable set $D=\{d_0,d_1,\dots\}\subseteq B$ of
\emph{committed} branches by recursion, together with an increasing
sequence of \emph{splitting levels}. Fix an agenda: an enumeration of
$\omega\times R$ in which every pair $(i,r)$ appears infinitely often
and is treated only after $d_i$ has been committed. Start by
committing an arbitrary $d_0\in B$. At the stage treating $(i,r)$:
choose $\ell\in L$ with $\ell\equiv r\pmod q$, $\ell\geq1$ and
$\ell$ strictly larger than every level used so far (possible, as $r$
recurs in $L$); choose a value $v\neq d_i(\ell)$ and a new branch
$e\in B\cap[(d_i\restr\ell)\concat v]$ (possible, as $B$ meets every
cylinder in an infinite set); commit $e$.

Let $T$ be the tree of prefixes of $D$. By induction, all pairwise
separations of committed branches lie in $L$: the new branch $e$
separates from $d_i$ exactly at $\ell\in L$, and from any other
committed $d'$ at $\Delta(e,d')=\Delta(d_i,d')$ (an earlier level, by
the ultrametric inequality, since $\Delta(d_i,d')<\ell$).

Each splitting event uses a fresh level, so it creates exactly one
new splitting node (the node $d_i\restr\ell$, which acquires a second
direction) and no node is ever split twice: every node of $T$ has at
most two successors, and the root exactly one (all levels used are
$\geq1$). Thus $T$ is pruned (every node lies on a committed branch)
and finitely branching, so $[T]$ is compact; moreover
$[T]=\overline D$: every prefix of a point of $[T]$ is a prefix of a
committed branch. Since the agenda revisits every committed branch
infinitely often, every basic open subset of $[T]$ contains two
distinct branches, so $[T]$ is perfect --- a Cantor set --- and $D$
is dense.

Finally, let $[t]\cap[T]\neq\varnothing$ be a relative open set and
$r\in R$. The node $t$ lies on some committed $d_i$. The pair $(i,r)$
recurs in the agenda and the fresh levels increase without bound, so
one of its later treatments occurs at a level $\ell>|t|$. This
produces a splitting inside $[t]\cap[T]$ at a level
$\equiv r\pmod q$.
\end{proof}

\begin{theorem}\label{thm:H1PC}
For every $k\geq2$ there is $X\in\HH_1\cap\PC\setminus\GG_k$. In
particular $\GG_k$ is proper and $\GG_k\subsetneq\HH_2$.
\end{theorem}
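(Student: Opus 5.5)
The plan is to build $X$ directly from Lemma~\ref{lem:fusion}. Apply the lemma with $q=k$ and $L=\omega$, so that $R$ is the set of all residues modulo $k$; since $k\geq2$, this set contains at least two residues, as the lemma requires. The lemma gives a pruned tree $T$ and a countable dense $D\subseteq B\cap[T]$. Put
\[
 X=\{x\in\omega:b_x\in D\}.
\]
This set is infinite, since $D$ is dense in the Cantor set $[T]$, and $T_X\subseteq T$, since every point of $X$ lies on a branch through $T$.

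\emph{Membership in $\HH_1\cap\PC$.} The root of $T$ has a single successor, so $\suc_{T^\infty_X}(\varnothing)\subseteq\suc_{T_X}(\varnothing)$ has at most one element. The root therefore dies at the first derivative, and $X\in\HH_1$. For $\PC$, every node of $T_X$ has at most two successors and the root at most one, so $|\suc_{T_X}(s)|\leq|s|+1$ for every $s\in T_X$. Thus $X$ is itself a slowly branching generator of $\PC$.

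\emph{Non-membership in $\GG_k$.} This is the main step, and I would argue by a category/density argument in $[T]$. Suppose $X\subseteq Y_1\cup\dots\cup Y_n$ with each $Y_i$ homogeneous for $c_k$, say of colour $r_i$. Let $D_i=\{b_x:x\in X\cap Y_i\}$. Then $D=\bigcup_iD_i$, so $[T]=\overline D=\bigcup_i\overline{D_i}$. A finite union of closed sets with empty interior has empty interior, so some $\overline{D_i}$ contains a nonempty relative open set $[t]\cap[T]$; that is, $D_i$ is dense in $[t]\cap[T]$. Pick a residue $r'\neq r_i$. By the last clause of Lemma~\ref{lem:fusion}, there is a splitting node $u\supseteq t$ of $T$ with $|u|\equiv r'\pmod k$. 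The two sets $[u\concat a]\cap[T]$, for the two successors $a$ of $u$, are nonempty relatively open subsets of $[t]\cap[T]$, so each contains a branch $b_x$ with $x\in X\cap Y_i$. Two such points $x,y$ satisfy $\Delta(x,y)=|u|$, so $c_k(\{x,y\})=r'\neq r_i$, contradicting the homogeneity of $Y_i$. Hence $X\notin\GG_k$.

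\emph{Consequences.} Since $X\notin\GG_k$, the ideal $\GG_k$ is proper. Since $X\in\HH_1\subseteq\HH_2$, Theorem~\ref{thm:GH2} gives $\GG_k\subsetneq\HH_2$. The only delicate point is the Baire step: one needs the density of $D$ and the existence of splittings at every residue inside every relative open set. Both are provided by Lemma~\ref{lem:fusion}, so I expect no further obstacle.
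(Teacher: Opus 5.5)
Your proposal is correct and follows the paper's proof: it uses the same set $X=\{x:b_x\in D\}$ obtained from Lemma~\ref{lem:fusion} with $L=\omega$ and $q=k$, and the same category argument in $[T]$, which finds splittings at two distinct residues inside any relatively open set. Your deviations are only cosmetic: you drop the finite set $F$, which is harmless because singletons are homogeneous, and you replace the Baire category theorem by the elementary fact that a finite union of closed sets with empty interior has empty interior.
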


\begin{proof}
Apply Lemma~\ref{lem:fusion} with $L=\omega$ and $q=k$ (so $R$ is the
full set of residues), and let $X=\{x:b_x\in D\}$. Then $T_X=T$: the
root of $T^\infty_X$ has at most one child, so $X\in\HH_1$; and
$|\suc_{T_X}(s)|\leq2\leq|s|+1$ for $|s|\geq1$, with a single
successor at the root, so $X\in\PC$.

Suppose $X\subseteq H_0\cup\dots\cup H_{m-1}\cup F$ with each $H_i$
homogeneous of color $r_i$ and $F$ finite. Consider the closures
$K_i=\overline{b[H_i\cap X]}$ in $[T]$. Any two distinct points
$p,q\in K_i$ satisfy $\Delta(p,q)\equiv r_i\pmod k$: taking
$x,y\in H_i\cap X$ with $b_x\restr(d+1)=p\restr(d+1)$ and
$b_y\restr(d+1)=q\restr(d+1)$, where $d=\Delta(p,q)$, gives
$\Delta(x,y)=d$, hence $d\equiv r_i$. If some $K_i$ contained a
nonempty relative open set, that open set would contain splittings in
at least two residue classes modulo $k$ (Lemma~\ref{lem:fusion} with
$R$ full and $k\geq2$), that is, pairs of branches with separation
levels in two distinct classes --- contradicting the previous
sentence. Hence every $K_i$ is closed and nowhere dense in $[T]$, and
$b[F]$ is finite, hence nowhere dense in the perfect set $[T]$. But
$[T]=\overline{b[X]}\subseteq\bigcup_iK_i\cup\overline{b[F]}$, a
finite union of nowhere dense closed sets covering a compact metric
space --- contradicting the Baire category theorem. So
$X\notin\GG_k$; in particular $\omega\notin\GG_k$ ($\GG_k$ is
proper), and $X\in\HH_2\setminus\GG_k$ gives
$\GG_k\subsetneq\HH_2$.
\end{proof}

\begin{theorem}[inclusion $=$ divisibility]\label{thm:divisibility}
For $k,\ell\geq2$:
$\GG_k\subseteq\GG_\ell\iff\ell\mid k$.
\end{theorem}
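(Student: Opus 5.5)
For $(\Leftarrow)$, suppose $\ell\mid k$. Every $c_k$-homogeneous set $Y$ of color $r$ is $c_\ell$-homogeneous of color $r\bmod\ell$. Indeed, for $x\neq y$ in $Y$ we have $\Delta(x,y)\equiv r\pmod k$, hence $\Delta(x,y)\equiv r\pmod\ell$. So every generator of $\GG_k$ lies in $\GG_\ell$, and since $\GG_\ell$ is an ideal we get $\GG_k\subseteq\GG_\ell$. This direction is immediate.

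For $(\Rightarrow)$, argue by contraposition. Assume $\ell\nmid k$. I will construct a set $X$ that is $c_k$-homogeneous (so $X\in\GG_k$) but not in $\GG_\ell$, reusing the Baire category argument from the proof of Theorem~\ref{thm:H1PC}. Take
\[
 L=\{jk:j\geq1\}\qquad\text{and}\qquad q=\ell.
\]
The residues of the multiples $jk$ modulo $\ell$ run through the subgroup $g\mathbb Z/\ell\mathbb Z$ with $g=\gcd(k,\ell)$, and each such residue occurs infinitely often. This subgroup has $\ell/g\geq2$ elements precisely because $\ell\nmid k$. So the hypothesis of Lemma~\ref{lem:fusion} is satisfied: at least two residues modulo $\ell$ recur in $L$.

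Lemma~\ref{lem:fusion} then gives a tree $T$ and a countable dense set $D\subseteq B\cap[T]$. Every splitting of $T$ occurs at a level in $L$, and every relative open subset of $[T]$ contains splittings in every class of $R$, so in at least two distinct classes modulo $\ell$. Put $X=\{x:b_x\in D\}$. Any two distinct points of $X$ separate at a splitting level of $T$, which is a multiple of $k$. Hence $X$ is $c_k$-homogeneous of color $0$, and $X\in\GG_k$.

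It remains to show $X\notin\GG_\ell$, which repeats the proof of Theorem~\ref{thm:H1PC} verbatim with modulus $\ell$. Suppose $X\subseteq H_0\cup\dots\cup H_{m-1}\cup F$, where each $H_i$ is $c_\ell$-homogeneous and $F$ is finite. Then every closure $K_i=\overline{b[H_i\cap X]}$ has all its pairwise separation levels in one class modulo $\ell$. The open-set property of $T$ then forces each $K_i$ to be nowhere dense in the Cantor set $[T]$. This contradicts Baire category, because $[T]=\overline{b[X]}$ is covered by finitely many nowhere dense closed sets. Thus $X\in\GG_k\setminus\GG_\ell$, so $\GG_k\not\subseteq\GG_\ell$.

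The main point to check is that Lemma~\ref{lem:fusion} really applies to the modulus $\ell$ with the restricted level set $L$, that is, that at least two residues recur. This is exactly where $\ell\nmid k$ enters, through the count $|\{jk\bmod\ell\}|=\ell/\gcd(k,\ell)\geq2$. The Baire argument itself needs only "at least two classes" in every open set, and does not need the full set of residues.
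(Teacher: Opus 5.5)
Your proof is correct and follows the paper's argument essentially verbatim: the same level set $L=\{jk:j\geq1\}$ with $q=\ell$ in Lemma~\ref{lem:fusion}, the same count $\ell/\gcd(k,\ell)\geq2$ of recurring residues, and the same Baire category argument from Theorem~\ref{thm:H1PC}. The only step you leave implicit is that the finite set $b[F]$ is nowhere dense because $[T]$ is perfect; the paper states this explicitly.
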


\begin{proof}
If $\ell\mid k$, every $c_k$-homogeneous set of color $r$ is
$c_\ell$-homogeneous of color $r\bmod\ell$, so
$\GG_k\subseteq\GG_\ell$. If $\ell\nmid k$, let
$L=\{jk:j\geq1\}$. The residues of $L$ modulo $\ell$ form the
subgroup generated by $k$ in $\mathbb Z_\ell$, of size
$\ell/\gcd(k,\ell)>1$, each occurring infinitely often. Apply
Lemma~\ref{lem:fusion} with this $L$ and $q=\ell$, and let
$X=\{x:b_x\in D\}$. All separations within $X$ lie in $L$, hence are
$\equiv0\pmod k$: $X$ is $c_k$-homogeneous of color $0$ and
$X\in\GG_k$. The Baire argument of Theorem~\ref{thm:H1PC}, now with
the residues modulo $\ell$ (at least two recur), gives
$X\notin\GG_\ell$.
\end{proof}

\begin{proposition}\label{prop:PCincomp}
$\GG_k$ and $\PC$ are incomparable under inclusion.
\end{proposition}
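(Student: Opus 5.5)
We have to exhibit two sets, one for each failed inclusion: a set in $\PC\setminus\GG_k$, and a set in $\GG_k\setminus\PC$.

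\emph{$\PC\not\subseteq\GG_k$.} This half is already done. Theorem~\ref{thm:H1PC} produces $X\in\HH_1\cap\PC\setminus\GG_k$ for every $k\geq2$, so $X$ witnesses it directly.

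\emph{$\GG_k\not\subseteq\PC$.} It suffices to find a single $c_k$-homogeneous set $Y$ with $Y\notin\PC$. The natural candidate is the color-$0$ set from the proof of Theorem~\ref{thm:GH2}: for each $n$ fix a node $p_n\supseteq\langle n\rangle$ of length $k$ and distinct points $x_{n,j}\in A_{p_n\concat j}$, and put $Y=\{x_{n,j}\}$. It is homogeneous of color $0$, so $Y\in\GG_k$. What remains is to show $Y\notin\PC$, i.e.\ $Y$ is not covered by finitely many selectors together with finitely many sets $X_i$ of slowly growing branching. The tree $T_Y$ has infinite branching at the root and at each $p_n$.

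\emph{Case analysis for $Y\notin\PC$.} Suppose $Y\subseteq S_1\cup\dots\cup S_a\cup X_1\cup\dots\cup X_b$, with selectors $S_i$ and slow-branching $X_i$. Each $X_i$ has $|\suc_{T_{X_i}}(\varnothing)|\leq1$, so every $X_i$ lies inside a single first-level cell $A_{\langle c_i\rangle}$. A selector lies in some $A_s$ and has one point per child of $s$. Take a row index $n$ outside the finitely many $c_i$ and outside the finitely many first coordinates of the nodes $s_i$ of length $\geq1$. Then row $n$ meets no $X_i$. It also meets each selector in at most one point:
\begin{itemize}
\item selectors based at the root contribute at most one point in $A_{\langle n\rangle}$;
\item selectors based at nodes of length $\geq1$ have been excluded by the choice of $n$.
\end{itemize}
Hence row $n$ is covered by at most $a$ points, yet it is infinite, which is a contradiction.

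\emph{Main obstacle: the definition of $\PC$.} The only delicate point is confirming that $\PC$ is \emph{generated} by the two kinds of sets, so that members are subsets of finite unions of generators. If ``generated'' is meant to include $\Fin$, that is harmless: a finite set covers only finitely many points, and choosing $n$ so that row $n$ avoids them handles it. If the root condition $|\suc_{T_X}(\varnothing)|\leq1$ seemed too restrictive, I would instead take $p_n$ at depth $k\geq2$. The slow-growth bound $|s|+1$ then still caps the branching at each $p_n$ at $k+1$, whereas row $n$ needs infinitely many children of $p_n$, so it must still come from the selectors, which supply one point each. This gives the same contradiction.
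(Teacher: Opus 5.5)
Your proof is correct. For the half $\GG_k\not\subseteq\PC$ it takes a different and more elementary route than the paper.

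The paper builds a new $\omega$-ary fusion set: active nodes sit at fresh levels $\equiv0\pmod k$ and every committed branch is split cofinally. It then runs a Baire category argument on the perfect Polish space of its branches. Each selector's branch-closure is nowhere dense because all its splittings occur at a single level. Each bounded-branching generator's branch-closure is nowhere dense because the dense active nodes have infinitely many successors.

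You instead reuse the color-$0$ lattice set of Theorem~\ref{thm:GH2} and count points in one generic row. Your count in fact proves a structural fact: every generator of $\PC$ has at most one infinite first-level section. Root-based selectors meet each $A_{\langle n\rangle}$ in one point. Deeper selectors lie in a single first-level cell. Slowly branching sets also lie in a single first-level cell, since $|\suc_{T_X}(\varnothing)|\leq1$. Exactly as for $\ED_m$ in Proposition~\ref{prop:ED}, this gives $\PC\subseteq\HH_1$, so $\GG_k\not\subseteq\PC$ is an immediate consequence of $\GG_k\not\subseteq\HH_1$.

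What each route buys:
\begin{itemize}
\item Yours is shorter and additionally places $\PC$ at the first derived level.
\item The paper's needs a fresh construction, but it uses only that selectors split at one level and that the generators' trace trees are finitely branching at arbitrarily deep active nodes. It is therefore insensitive to what the generators do near the root.
\end{itemize}

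Your fallback paragraph is unnecessary. The root clause $|\suc_{T_X}(\varnothing)|\leq1$ is literally part of the definition, and the paper itself uses it in Theorem~\ref{thm:H1PC}. If you keep the fallback, say explicitly that choosing $n$ to avoid the first coordinates of the deeper selectors' bases is still essential there. A single selector based at $p_n$ can contain the entire row.
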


\begin{proof}
$\PC\not\subseteq\GG_k$ is Theorem~\ref{thm:H1PC}. For
$\GG_k\not\subseteq\PC$ we repeat the fusion in an $\omega$-ary
version. Build committed branches $d_i\in B$ and \emph{active nodes}
by recursion, with strictly increasing fresh levels, along an agenda
that repeats each of the following tasks infinitely often: (i) for
each committed $d_i$: pick a fresh level $\ell\equiv0\pmod k$, declare
the node $d_i\restr\ell$ active, and commit a new branch through
$(d_i\restr\ell)\concat v$ for a new value $v\neq d_i(\ell)$; (ii)
for each active node $u$: commit a new branch through
$u\concat v$ for a direction $v$ not yet used at $u$. As in
Lemma~\ref{lem:fusion}, all pairwise separations happen at the
splitting nodes, whose levels are $\equiv0\pmod k$, so
$X=\{x:b_x\in D\}$ is $c_k$-homogeneous of color $0$ and
$X\in\GG_k$. Let $S=T_X$, a pruned tree with
$[S]=\overline{b[X]}$ Polish and perfect (every committed branch is
split cofinally), in which every active node eventually has
infinitely many successors, and every nonempty relative open subset
of $[S]$ contains an active node (task (i) recurs at fresh levels).

Suppose $X\subseteq S_1\cup\dots\cup S_a\cup Y_1\cup\dots\cup
Y_b\cup F$ with $S_i$ selectors, $Y_j$ generators of bounded
branching ($|\suc_{T_{Y_j}}(s)|\leq|s|+1$) and $F$ finite. Then
$[S]$ is covered by the closures of the traces of the pieces, and it
suffices to show each closure is nowhere dense.

For a selector $S_i\subseteq A_{s_i}$: all pairs of $S_i$ separate
exactly at level $n_i=|s_i|$, so (as in Theorem~\ref{thm:H1PC}) any
two distinct points of $\overline{b[S_i\cap X]}$ have
$\Delta=n_i$. Every nonempty relative open subset of $[S]$ contains a
smaller basic open $[t']\cap[S]$ with $|t'|>n_i$ and at least two
points ($[S]$ is perfect); two such points have $\Delta\geq|t'|>n_i$,
so $[t']\cap[S]\not\subseteq\overline{b[S_i\cap X]}$: nowhere dense.

For a bounded-branching generator $Y_j$: the closure of
$b[Y_j\cap X]$ is contained in $[T_{Y_j}]$. If some nonempty
$[t]\cap[S]$ were contained in $[T_{Y_j}]$, take an active node
$u\supseteq t$ with infinitely many successors in $S$; each successor
extends to a branch of $[t]\cap[S]\subseteq[T_{Y_j}]$, so $u$ would
have infinitely many successors in $T_{Y_j}$, contradicting
$|\suc_{T_{Y_j}}(u)|\leq|u|+1$. Nowhere dense again.

Finally $b[F]$ is finite, hence nowhere dense in the perfect $[S]$.
The Baire category theorem for the Polish space $[S]$ gives a
contradiction, so $X\notin\PC$.
\end{proof}

\section{The Kat\v etov order of the chromatic ideals}\label{sec:katGk}

\begin{lemma}[level-stretching]\label{lem:stretching}
Let $h:\omega\to\omega$ be strictly increasing. There is an injection
$F:B\to B$ with $\Delta(F(p),F(q))=h(\Delta(p,q))$ for $p\neq q$.
\end{lemma}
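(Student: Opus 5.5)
The plan is to build $F$ by a back-and-forth-style recursion, but only in the forward direction. We enumerate $B=\{p_0,p_1,\dots\}$ and define $F(p_n)$ step by step, keeping a finite partial map that satisfies $\Delta(F(p_i),F(p_j))=h(\Delta(p_i,p_j))$ for all $i\neq j<n$ and is injective.

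\emph{Base.} Set $F(p_0)$ to be any element of $B$.

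\emph{Step.} Let $p=p_n$ be the new branch, and let $m=\max\{\Delta(p,p_i):i<n\}$, attained at some $p_{i_*}$. We want $F(p)\in B$ with $F(p)\restr h(m)=F(p_{i_*})\restr h(m)$ and $F(p)(h(m))$ different from every value $F(p_i)(h(m))$ for those $i<n$ with $F(p_i)\restr h(m)=F(p_{i_*})\restr h(m)$. Only finitely many directions are forbidden. So we pick an allowed direction $k$ and then a point of $B\cap[(F(p_{i_*})\restr h(m))\concat k]$. This point is automatically new, and the intersection is infinite because $B$ meets every cylinder in an infinite set.

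\emph{Verification.} Take $i<n$.
\begin{itemize}
\item If $\Delta(p,p_i)=m$, then $\Delta(p_i,p_{i_*})\geq m$. Hence $\Delta(F(p_i),F(p_{i_*}))\geq h(m)$, using that $h$ is increasing (for $p_i\neq p_{i_*}$). So $F(p_i)$ agrees with $F(p_{i_*})$ up to $h(m)$, its direction at $h(m)$ is forbidden, and therefore $\Delta(F(p),F(p_i))=h(m)$.
\item If $j=\Delta(p,p_i)<m$, then by the ultrametric property $\Delta(p_{i_*},p_i)=j$. Hence $\Delta(F(p_{i_*}),F(p_i))=h(j)<h(m)$, and since $F(p)$ shares the first $h(m)$ coordinates with $F(p_{i_*})$, we get $\Delta(F(p),F(p_i))=h(j)$.
\end{itemize}
Injectivity follows from $h(\cdot)$ being finite and distances being realized.

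The main subtlety is the first case above: the forbidden-direction set must include all earlier images that share the prefix $F(p_{i_*})\restr h(m)$, not just $F(p_{i_*})$ itself. Strict monotonicity of $h$ is used there, and also to guarantee $h(j)<h(m)$ in the second case.

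Unlike Lemma~\ref{lem:backforth}, no surjectivity is required, so there are no backward steps. The union of the finite maps is the desired $F:B\to B$.
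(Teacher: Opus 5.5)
Your proof is correct and is essentially the paper's own argument. Both run a forward-only recursion along an enumeration of $B$, placing each new image at level $h(m)$ below the image of the closest earlier branch in a fresh direction, and both verify it with the same two ultrametric cases. Your forbidden set, the earlier images sharing the prefix $F(p_{i_*})\restr h(m)$, coincides with the paper's set of those $q_i$ with $\delta_i=m$, because $h$ is strictly increasing.
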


\begin{proof}
Enumerate $B=\{p_0,p_1,\dots\}$ and construct the images
$q_i=F(p_i)$ by recursion, starting with an arbitrary $q_0\in B$.
Given $q_0,\dots,q_{N-1}$, put $\delta_i=\Delta(p_N,p_i)$, let
$m=\max_i\delta_i$, realized at $i_*$, and $d=h(m)$. If
$\delta_i=m$, then $\Delta(p_i,p_{i_*})\geq m$ by the ultrametric
inequality, so the inductive hypothesis gives
$\Delta(q_i,q_{i_*})=h(\Delta(p_i,p_{i_*}))\geq h(m)=d$: all such
$q_i$ share the prefix $q_{i_*}\restr d$ and forbid only finitely
many values at coordinate $d$. Choose a new value $a$ and, by
density, a new point $q_N\in B\cap[(q_{i_*}\restr d)\concat a]$.
Then $\Delta(q_N,q_i)=d=h(\delta_i)$ whenever $\delta_i=m$; and if
$\delta_i<m$, the ultrametric gives
$\Delta(p_i,p_{i_*})=\delta_i$, hence
$\Delta(q_i,q_{i_*})=h(\delta_i)<d$, and $q_N$, which follows
$q_{i_*}$ up to $d$, separates from $q_i$ exactly at
$h(\delta_i)$.
\end{proof}

\begin{theorem}\label{thm:kgeql}
If $k\geq\ell\geq2$ then $\GG_k\leq_{KB}\GG_\ell$, with an injective
witness.
\end{theorem}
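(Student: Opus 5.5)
The plan is to realize the reduction by a point map that \emph{stretches separation levels} while recoding residues: map $c_\ell$-colors onto $c_k$-colors injectively. Concretely, I will find an injective $f:\omega\to\omega$ such that
\[
 \Delta(f(x),f(y))=h(\Delta(x,y))\qquad(x\neq y)
\]
for a suitable strictly increasing $h:\omega\to\omega$. Such an $f$ is automatically finite-to-one. I will then show that $f^{-1}$ sends $c_k$-homogeneous sets to $c_\ell$-homogeneous sets or to sets with at most one point.

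\emph{Choice of $h$.} Since $k\geq\ell$, the residues $0,1,\dots,\ell-1$ are pairwise distinct modulo $k$. Define $h$ recursively: let $h(0)=0$, and let $h(n)$ be the least integer $>h(n-1)$ with $h(n)\equiv(n\bmod\ell)\pmod k$. Then $h$ is strictly increasing and
\[
 h(n)\bmod k=n\bmod\ell\in\{0,\dots,\ell-1\}\qquad\text{for every }n.
\]
In particular $h(n)\bmod k$ determines $n\bmod\ell$, and it never takes a value in $\{\ell,\dots,k-1\}$.

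\emph{Construction of $f$.} Apply Lemma~\ref{lem:stretching} to this $h$ to get an injection $F:B\to B$ with $\Delta(F(p),F(q))=h(\Delta(p,q))$. Recall that $x\mapsto b_x$ is a bijection $\omega\to B$. Define $f(x)$ as the unique $y$ with $b_y=F(b_x)$. Then $f$ is injective and satisfies the displayed identity.

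\emph{Verification.} Let $H$ be $c_k$-homogeneous of color $r$, and let $x\neq y$ lie in $f^{-1}[H]$. Then $h(\Delta(x,y))\equiv r\pmod k$, which splits into two cases:
\begin{itemize}
 \item If $r\geq\ell$, no such pair exists, so $|f^{-1}[H]|\leq1$.
 \item If $r<\ell$, then $\Delta(x,y)\equiv r\pmod\ell$, so $f^{-1}[H]$ is $c_\ell$-homogeneous of color $r$.
\end{itemize}
Every $A\in\GG_k$ is contained in a finite union of $c_k$-homogeneous sets together with a finite set. Preimages commute with unions, and $f^{-1}$ of a finite set is finite by injectivity. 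Hence $f^{-1}[A]\in\GG_\ell$, and $f$ is an injective (so finite-to-one) witness of $\GG_k\leq_{KB}\GG_\ell$.

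The only delicate point is the choice of $h$: it must be strictly increasing, so that Lemma~\ref{lem:stretching} applies, and at the same time it must induce an injective recoding of residues mod $\ell$ into residues mod $k$. This recoding is exactly where $k\geq\ell$ is used. The rest is bookkeeping.
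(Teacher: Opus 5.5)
Your proof is correct and follows essentially the same route as the paper: a strictly increasing $h$ recoding residues mod $\ell$ injectively into residues mod $k$, the level-stretching Lemma~\ref{lem:stretching}, and the induced injective point map, whose preimages of $c_k$-homogeneous sets are $c_\ell$-homogeneous or have at most one point. The only difference is that the paper allows an arbitrary injection $a:\{0,\dots,\ell-1\}\to\{0,\dots,k-1\}$, whereas you take the inclusion; for $k=3$, $\ell=2$ your $h$ is exactly the one in Corollary~\ref{cor:3vs2}.
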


\begin{proof}
Fix an injection $a:\{0,\dots,\ell-1\}\to\{0,\dots,k-1\}$ and build a
strictly increasing $h$ with $h(n)\equiv a(n\bmod\ell)\pmod k$
(every residue class modulo $k$ contains arbitrarily large numbers).
Let $F$ be the injection of Lemma~\ref{lem:stretching} and let
$f:\omega\to\omega$ be the induced injective point map
($b_{f(x)}=F(b_x)$). If $A$ is $c_k$-homogeneous of color $r$ and
$x,y\in f^{-1}[A]$ are distinct, then
$a(\Delta(x,y)\bmod\ell)=h(\Delta(x,y))\bmod k=r$; since $a$ is
injective, $f^{-1}[A]$ is $c_\ell$-homogeneous (of color
$a^{-1}(r)$) if $r$ is in the range of $a$, and has at most one point
otherwise. Preimages of finite unions of homogeneous sets and finite
sets stay in $\GG_\ell$, so $f$ witnesses
$\GG_k\leq_{KB}\GG_\ell$.
\end{proof}

\begin{corollary}\label{cor:3vs2}
$\GG_3\leq_{KB}\GG_2$ although $2\nmid3$: with $h(2m)=3m$,
$h(2m+1)=3m+1$ one has $h(n)\bmod3=n\bmod2$, so preimages of
homogeneous sets of colors $0,1$ are homogeneous of those colors,
while color $2$ has preimages of size at most one. In particular the
divisibility classification is \emph{false} for the Kat\v etov order:
inclusion preserves modular arithmetic
(Theorem~\ref{thm:divisibility}), Kat\v etov recodes it.
\end{corollary}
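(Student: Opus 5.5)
The statement is Corollary~\ref{cor:3vs2}. My plan is to obtain it as the instance $k=3$, $\ell=2$ of Theorem~\ref{thm:kgeql}, with the specific stretching function $h$ written out.

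\emph{Step 1: the stretching function.} Define $h(2m)=3m$ and $h(2m+1)=3m+1$. This $h$ is strictly increasing, since $3m<3m+1<3(m+1)$. Reading off residues, $h(n)\bmod 3=n\bmod 2$. So $h$ is the function required in the proof of Theorem~\ref{thm:kgeql} for the injection $a:\{0,1\}\to\{0,1,2\}$ given by $a(i)=i$.

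\emph{Step 2: the Kat\v etov witness.} Lemma~\ref{lem:stretching} gives an injection $F:B\to B$ with $\Delta(F(p),F(q))=h(\Delta(p,q))$. Let $f$ be the induced injective point map on $\omega$, determined by $b_{f(x)}=F(b_x)$.

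Let $A$ be $c_3$-homogeneous of color $r$, and let $x\neq y$ lie in $f^{-1}[A]$. Then
\[
\Delta(x,y)\bmod 2=h(\Delta(x,y))\bmod 3=r.
\]
This gives two cases.
\begin{itemize}
\item If $r\in\{0,1\}$, then $f^{-1}[A]$ is $c_2$-homogeneous of color $r$.
\item If $r=2$, no such pair exists, so $f^{-1}[A]$ has at most one point.
\end{itemize}
Preimages commute with finite unions, and $f$ is injective, so preimages of finite sets are finite. Hence every member of $\GG_3$ pulls back into $\GG_2$, and $f$ is an injective (in particular finite-to-one) witness of $\GG_3\leq_{KB}\GG_2$.

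\emph{Step 3: comparison with inclusion.} By Theorem~\ref{thm:divisibility}, $\GG_3\not\subseteq\GG_2$ because $2\nmid 3$. So the Kat\v etov order is not governed by divisibility, while inclusion is.

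There is no real obstacle here. The only point needing care is that $h$ must be strictly increasing for Lemma~\ref{lem:stretching} to apply, and this is the check made in Step 1.
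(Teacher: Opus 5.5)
Your proposal is correct and follows the paper's route: the corollary is the instance $k=3$, $\ell=2$, $a=\mathrm{id}$ of Theorem~\ref{thm:kgeql}, with the explicit $h$ fed into Lemma~\ref{lem:stretching} and the same color-by-color analysis of preimages. Your check that $h$ is strictly increasing supplies the hypothesis that lemma needs, and your contrast with Theorem~\ref{thm:divisibility} is the one the paper draws.
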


\begin{definition}\label{def:radial}
A \emph{radial reduction} of $\GG_k$ to $\GG_\ell$ is an injection
$F:B\to B$ for which there is a strictly increasing $h$ with
$\Delta(F(p),F(q))=h(\Delta(p,q))$, witnessing
$\GG_k\leq_K\GG_\ell$.
\end{definition}

\begin{lemma}[two-level lattice]\label{lem:lattice}
Let $n<m$ have distinct residues modulo $\ell$. There is
$X=\{x_{i,j}:i,j\in\omega\}$ with
$\Delta(x_{i,j},x_{i',j'})=n$ for $i\neq i'$,
$\Delta(x_{i,j},x_{i,j'})=m$ for $j\neq j'$, and
$X\notin\GG_\ell$.
\end{lemma}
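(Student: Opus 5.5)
The plan is to build $X$ explicitly inside a single cone of the tree partition and then defeat any finite cover by homogeneous sets with a counting (pigeonhole) argument. No Baire category is needed here: the two prescribed separation levels already force every homogeneous set to be thin in one of two directions.

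\emph{Construction.} Fix any node $u$ of length $n$. For each $i\in\omega$ choose a node $u_i\supseteq u\concat i$ of length $m$; this is possible since $m>n$. Then choose points $x_{i,j}\in A_{u_i\concat j}$ for $j\in\omega$, all distinct (every cell is infinite). We check the separation levels directly from the branch dictionary.
\begin{itemize}[leftmargin=2em]
 \item For $i\neq i'$, the branches $b_{x_{i,j}}$ and $b_{x_{i',j'}}$ both extend $u$ and pass through the distinct children $u\concat i$ and $u\concat i'$. Hence $\Delta(x_{i,j},x_{i',j'})=n$.
 \item For $j\neq j'$, both branches extend $u_i$ and pass through the distinct children $u_i\concat j$ and $u_i\concat j'$. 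Hence $\Delta(x_{i,j},x_{i,j'})=m$.
\end{itemize}
Thus $X$ realizes the lattice pattern. Call $\{x_{i,j}:j\in\omega\}$ the $i$-th \emph{row} of $X$.

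\emph{Non-membership.} Suppose $X\subseteq H_1\cup\dots\cup H_a\cup F$, where each $H_p$ is $c_\ell$-homogeneous of color $r_p$ and $F$ is finite. Since $n\not\equiv m\pmod\ell$, each $r_p$ differs from at least one of $n\bmod\ell$ and $m\bmod\ell$.
\begin{itemize}[leftmargin=2em]
 \item If $r_p\not\equiv n$, then $H_p\cap X$ meets at most one row, because two points in distinct rows have color $n\bmod\ell$.
 \item If $r_p\not\equiv m$, then $H_p\cap X$ meets each row in at most one point, because two points in the same row have color $m\bmod\ell$.
\end{itemize}
Let $I$ be the set of rows met by the sets of the first kind; then $|I|\leq a$. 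Choose a row $i\notin I$. Row $i$ is infinite. The sets of the second kind cover at most $a$ of its points, and $F$ covers finitely many more. So some point $x_{i,j}$ escapes the cover, a contradiction. Hence $X\notin\GG_\ell$.

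\emph{Main obstacle.} The only delicate point is the case split: each homogeneous set must fall into at least one of the two thin types. This is exactly where the hypothesis that $n$ and $m$ have distinct residues modulo $\ell$ is used, so that the same color cannot be compatible with both separation levels. The construction itself is routine, given that the cells are infinite and that the branch dictionary determines $\Delta$.
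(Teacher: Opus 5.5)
Your proposal is correct and follows essentially the same route as the paper. The paper also builds rows below distinct children of a node $u$ of length $n$ and spreads each row over distinct children of a node of length $m$. It then shows that every $c_\ell$-homogeneous set is confined either to one row or to at most one point per row, and a finite cover therefore misses infinitely many points of some row.
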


\begin{proof}
Fix a node $u$ of length $n$. For each $i$ pick a distinct value
$v_i$ and a node $w_i\supseteq u\concat v_i$ of length $m$; then, by
density of $B$, pick branches
$b_{x_{i,j}}\in B\cap[w_i\concat z_{i,j}]$ with the $z_{i,j}$
distinct within each row, all points distinct. Rows separate at level
$n$, columns within a row at level $m$.

Let $H$ be $c_\ell$-homogeneous of color $r$. Pairs of $X$ separate
only at $n$ or $m$, and $n\not\equiv m\pmod\ell$. If
$r\equiv n$, then $H\cap X$ has at most one point per row; if
$r\equiv m$, then $H\cap X$ lies in a single row; for any other
color, $|H\cap X|\leq1$. Hence a finite union of homogeneous sets
covers finitely many full rows and finitely many extra points per
remaining row; adding a finite set still leaves infinitely many
points of some row uncovered. So $X\notin\GG_\ell$.
\end{proof}

\begin{proposition}[exact radial classification]\label{prop:radial}
There is a radial reduction of $\GG_k$ to $\GG_\ell$ if and only if
$k\geq\ell$.
\end{proposition}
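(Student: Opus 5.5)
The sufficiency direction is already in hand. If $k\geq\ell$, the witness built in the proof of Theorem~\ref{thm:kgeql} is an injection $F:B\to B$ from Lemma~\ref{lem:stretching}, with the strictly increasing $h$ chosen so that $h(n)\equiv a(n\bmod\ell)\pmod k$. Hence it is radial in the sense of Definition~\ref{def:radial}, and nothing more is needed. The substance is necessity: assuming $\ell>k$, we show that no radial $F$ (with its strictly increasing $h$) can witness $\GG_k\leq_K\GG_\ell$.

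For necessity, consider the residue map $\rho:\mathbb Z_\ell\to\mathbb Z_k$ sending a class $c$ to the set of residues mod $k$ of $h(n)$ for $n\equiv c\pmod\ell$. A cleaner route is a pigeonhole on the actual levels. Since $\ell>k$, among any $\ell$ consecutive levels $n_0,\dots,n_0+\ell-1$ (which have pairwise distinct residues mod $\ell$), the values $h(n)\bmod k$ take at most $k<\ell$ values. So there are levels $n<m$ in this window with $n\not\equiv m\pmod\ell$ but $h(n)\equiv h(m)\pmod k$. Call this common residue $r$.

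Next apply Lemma~\ref{lem:lattice} to this pair $n<m$, which produces a lattice $X=\{x_{i,j}\}$ with $X\notin\GG_\ell$. Let $f$ be the point map induced by $F$, namely $b_{f(x)}=F(b_x)$, and put $A=f[X]$. Because $F$ is injective and $\Delta(F(p),F(q))=h(\Delta(p,q))$, any two distinct points of $A$ separate at level $h(n)$ or $h(m)$, both $\equiv r\pmod k$. Thus $A$ is $c_k$-homogeneous of color $r$, so $A\in\GG_k$. However $f^{-1}[A]\supseteq X\notin\GG_\ell$, which contradicts the reduction. Hence $\ell\leq k$.

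I expect the main obstacle to be bookkeeping rather than mathematics. First, the point map $f$ is defined only where $F(b_x)\in B$, which holds by the lemma's construction, so $f$ is an injection $\omega\to\omega$. Second, Definition~\ref{def:radial} treats the witness as $F$ on branches, and this must be translated to points. Third, Lemma~\ref{lem:lattice} requires $n<m$ with distinct residues mod $\ell$, which the pigeonhole supplies once we restrict to one block of $\ell$ consecutive integers. No other step seems delicate.
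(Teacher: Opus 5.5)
Your proof is correct and is essentially the paper's argument. The paper partitions $\omega$ into the fibres $S_r=\{n:h(n)\equiv r\pmod k\}$ and uses Lemma~\ref{lem:lattice} exactly as you do to show that each nonempty $S_r$ lies in a single residue class modulo $\ell$; it then concludes $k\geq\ell$ by counting, which is your pigeonhole on $\ell$ consecutive levels phrased as a surjection. (One trivial slip: in the necessity direction $F$ is an arbitrary radial map, not the one from Lemma~\ref{lem:stretching}, but the point map is still defined everywhere because Definition~\ref{def:radial} requires $F:B\to B$.)
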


\begin{proof}
Sufficiency is Theorem~\ref{thm:kgeql}, whose witness is radial. For
necessity, let $F$ be radial with stretching function $h$ and put
$S_r=\{n:h(n)\equiv r\pmod k\}$ for $r<k$. If some $S_r$ contained
$n<m$ with distinct residues modulo $\ell$, take the set $X$ of
Lemma~\ref{lem:lattice} for this pair: all separations within
$F[X]$ are $h(n)$ or $h(m)$, both $\equiv r\pmod k$, so $F[X]$ is
$c_k$-homogeneous of color $r$, hence in $\GG_k$; but
$F^{-1}[F[X]]=X\notin\GG_\ell$ ($F$ is injective), so $F$ would not
be a reduction. Hence each nonempty $S_r$ lies in a single residue
class modulo $\ell$. The sets $S_r$ partition $\omega$, so every one
of the $\ell$ residue classes modulo $\ell$ meets, and then equals
the class of, some $S_r$: the assignment $r\mapsto$ (class of $S_r$)
maps the at most $k$ nonempty indices \emph{onto} all $\ell$
classes, and therefore $k\geq\ell$.
\end{proof}

\begin{lemma}[necessary conditions]\label{lem:necessary}
If $f$ witnesses $\GG_k\leq_K\GG_\ell$, then every fiber of $f$ is in
$\GG_\ell$ and the image $f[\omega]$ is $\GG_k$-positive.
\end{lemma}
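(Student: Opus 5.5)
Both claims follow directly from the definition of a Katětov witness, using only the singletons and the whole space of $\GG_k$; I treat them separately.

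\emph{Fibers.} Fix $y\in\omega$. The singleton $\{y\}$ is finite. Since all ideals are assumed to contain $\Fin$, we have $\{y\}\in\GG_k$; alternatively, $\{y\}$ is trivially $c_k$-homogeneous. Because $f$ witnesses $\GG_k\leq_K\GG_\ell$, the preimage $f^{-1}[\{y\}]=f^{-1}\{y\}$ lies in $\GG_\ell$. This holds for every $y$, so every fiber of $f$ is in $\GG_\ell$.

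\emph{Image.} Suppose, towards a contradiction, that $f[\omega]\in\GG_k$. Applying the reduction to this set gives $f^{-1}[f[\omega]]=\omega\in\GG_\ell$. This contradicts the properness of $\GG_\ell$, which is part of Theorem~\ref{thm:H1PC}: there is $X\notin\GG_\ell$, so $\omega\notin\GG_\ell$. Hence $f[\omega]$ is $\GG_k$-positive.

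No step is a real obstacle. The only non-formal input is the properness of $\GG_\ell$, and that is already established in Theorem~\ref{thm:H1PC} through the fusion construction and the Baire category argument. The same reasoning applies verbatim to any Katětov reduction between proper ideals containing $\Fin$.
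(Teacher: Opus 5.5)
Your proposal is correct and follows the paper's proof: fibers are preimages of singletons, which lie in $\GG_k$, and $f[\omega]\in\GG_k$ would force $\omega=f^{-1}[f[\omega]]\in\GG_\ell$, contradicting the properness of $\GG_\ell$ from Theorem~\ref{thm:H1PC}. The only difference is cosmetic. You apply the reduction to $f[\omega]$ directly, whereas the paper pulls back a finite cover of $f[\omega]$ by homogeneous sets and a finite set, which amounts to the same thing.
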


\begin{proof}
Each fiber $f^{-1}\{p\}$ is the preimage of the singleton $\{p\}$,
which lies in $\GG_k$; hence it lies in $\GG_\ell$. If
$f[\omega]\in\GG_k$, a finite cover of it by homogeneous sets and a
finite set would pull back to a cover of $\omega$ by members of
$\GG_\ell$, contradicting properness (Theorem~\ref{thm:H1PC}).
\end{proof}

\begin{conjecture}\label{conj:Gk}
$\GG_k\leq_K\GG_\ell\iff k\geq\ell$.
\end{conjecture}

\section{Branch accumulation: two orthogonal ranks}\label{sec:CB}

\begin{lemma}\label{lem:dictionary}
For every $X\subseteq\omega$, $[T^\infty_X]=(b[X])'$, the set of
accumulation points of $b[X]$ in $\omega^\omega$.
\end{lemma}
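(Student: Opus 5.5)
The plan is to prove the two inclusions separately, working with basic open sets of $\omega^\omega$. The tool is that the cylinder $[s]$ meets $b[X]$ exactly in $\{b_x:x\in X\cap A_s\}$, since $b_x\supseteq s$ iff $x\in A_s$. Because $x\mapsto b_x$ is injective, $[s]\cap b[X]$ is infinite iff $X\cap A_s$ is infinite, i.e.\ iff $s\in T^\infty_X$. Both directions reduce to this translation.

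\emph{First inclusion, $[T^\infty_X]\subseteq(b[X])'$.} Let $p\in[T^\infty_X]$, so every restriction $p\restr n$ lies in $T^\infty_X$. Then each basic neighbourhood $[p\restr n]$ of $p$ contains infinitely many points of $b[X]$. In particular it contains a point other than $p$, so $p$ is an accumulation point.

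\emph{Second inclusion, $(b[X])'\subseteq[T^\infty_X]$.} Let $p$ be an accumulation point of $b[X]$ and fix $n$. In the metrizable space $\omega^\omega$, every neighbourhood of an accumulation point contains infinitely many points of the set. The cleanest way to see this: for each $m\geq n$ the cylinder $[p\restr m]$ contains some $b_{x_m}\neq p$. These points converge to $p$ and differ from it, so infinitely many of them are distinct. All of them lie in $[p\restr n]$. Hence $X\cap A_{p\restr n}$ is infinite, so $p\restr n\in T^\infty_X$ for every $n$. Since $T^\infty_X$ is downward closed (as $A_{s\restr j}\supseteq A_s$), this gives $p\in[T^\infty_X]$.

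I expect no real obstacle. The only point needing care is the step from ``some point other than $p$'' to ``infinitely many points'' in the second inclusion. It is handled by the sequence-of-distinct-points argument above, together with injectivity of $x\mapsto b_x$, which converts infinitely many branches into infinitely many points of $X$.
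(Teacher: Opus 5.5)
Your proof is correct and follows the paper's argument. The paper compresses both inclusions into the single observation that $z\in[T^\infty_X]$ iff every cylinder $[z\restr n]$ contains infinitely many points of $b[X]$, which in the metric space $\omega^\omega$ is the same as being an accumulation point; you simply spell out the translation $x\in A_s\iff b_x\supseteq s$ and the metric-space step. One small slip in your closing remark: injectivity of $x\mapsto b_x$ is needed in the first inclusion, to turn infinitely many points of $X\cap A_s$ into infinitely many branches, whereas the conversion in the second inclusion holds for any function.
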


\begin{proof}
$z\in[T^\infty_X]$ iff every basic neighbourhood $[z\restr n]$
contains infinitely many points of $b[X]$, which in the metric space
$\omega^\omega$ is the definition of accumulation point.
\end{proof}

\begin{proposition}\label{prop:C}
The ideal $\Sc=\{X:[T^\infty_X]\text{ is countable}\}$ is proper,
tall, not a P-ideal, and $\mathbf\Pi^1_1$-complete.
\end{proposition}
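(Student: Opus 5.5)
The four claims will be handled separately, all through the dictionary of Lemma~\ref{lem:dictionary}: $[T^\infty_X]=(b[X])'$.

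\emph{Ideal and proper.} If $X\subseteq Y$, then $T^\infty_X\subseteq T^\infty_Y$, so $[T^\infty_X]\subseteq[T^\infty_Y]$. For unions, $T^\infty_{X\cup Y}=T^\infty_X\cup T^\infty_Y$, and a branch of a union of two trees is a branch of one of them (if it left both, it would leave each at some level, hence leave the union at the larger level). So $[T^\infty_{X\cup Y}]=[T^\infty_X]\cup[T^\infty_Y]$ is countable. Finite sets have empty trace. Properness: $T^\infty_\omega=\omega^{<\omega}$ has body $\omega^\omega$, which is uncountable.

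\emph{Tall.} Given infinite $X$, I will find an infinite $Y\subseteq X$ whose trace has exactly one branch. Either some first-level section is infinite (then descend into it), or all are finite. Iterating gives a decreasing sequence of infinite sets, from which I pick distinct $y_n$ so that $Y=\{y_n\}$ accumulates along a single path. Concretely, the infinite set $b[X]$ has either an accumulation point $z$ in $\omega^\omega$ or none. In the first case, pick $y_n\in X$ with $b_{y_n}\to z$ and distinct; then $(b[Y])'=\{z\}$. In the second case, $(b[X])'=\varnothing$ already, and $X\in\Sc$.

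\emph{Not a P-ideal.} I take $X_n=A_{\langle n\rangle}$, each of which is not in $\Sc$ (its trace contains a full cone, so its body is uncountable). That is useless, so instead I use sets inside $\Sc$. Take $X_n$ to be a set whose trace is the single branch through $\langle n\rangle$ followed by zeros, realized via Lemma~\ref{lem:realization}, or simply choose points converging to $z_n=\langle n\rangle\concat 0^\infty$. Then $X_n\in\Sc$, and these sets are pairwise disjoint. Now suppose $X\in\Sc$ with $X_n\subseteq^* X$ for all $n$. Each $z_n$ is then an accumulation point of $b[X]$. That alone only gives countably many points, so a better family is needed.

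Instead, I index by $s\in\omega^{<\omega}$, enumerated as $s_n$, and let $X_n$ converge to $s_n\concat0^\infty$. The set $\{s\concat0^\infty\}$ is dense in $\omega^\omega$. If $X_n\subseteq^*X$ for all $n$, then $(b[X])'$ is a closed set containing a dense set, hence equal to $\omega^\omega$, so $X\notin\Sc$. This is the intended witness.

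\emph{$\mathbf\Pi^1_1$-completeness.} Upper bound: $X\notin\Sc$ iff the closed set $[T^\infty_X]$ is uncountable, iff $T^\infty_X$ contains a perfect subtree. This is $\Sigma^1_1$, since the map $X\mapsto T^\infty_X$ is Borel. Lower bound: I compose the Realization Lemma~\ref{lem:realization} with the classical continuous reduction from well-founded trees $R$ to trees with countable body: $R\mapsto\Fat_2(R)$, replacing each node by a binary splitting, i.e.\ $t\in 2^{<\omega}$-blocks coding $r\in R$. Then $R$ is ill-founded iff the body contains a perfect set, and otherwise the body is empty.

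The realized tree must be pruned enough for Lemma~\ref{lem:realization} to give $T^\infty_{X_S}=S$ exactly. Since $[T^\infty_{X_S}]=[S]$, this is fine. The main obstacle is this coding step; I would use $S=\{t:\exists r\in R,\ |r|=|t|,\ \forall i\ t(i)\in\{2r(i),2r(i)+1\}\}$. Then $[S]$ is empty or contains a copy of $2^\omega$ over each branch of $R$, so $X_S\in\Sc$ iff $R$ is well-founded.
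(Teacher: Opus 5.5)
Your proof is correct and follows the paper's argument essentially step by step. The ideal and properness come from the derived-set dictionary, tallness from a sequence converging to an accumulation point, and the failure of the P-property from sequences converging to the points of a countable dense set (your $s\concat 0^\infty$). For $\mathbf\Pi^1_1$-completeness you use the perfect-subtree upper bound and the doubling tree $S(R)$, where your injection $\langle a,\varepsilon\rangle=2a+\varepsilon$ is a special case of the paper's, composed with the Realization Lemma. The pruning worry is unfounded: Lemma~\ref{lem:realization} gives $T^\infty_{X_S}=S$ for every tree $S$, pruned or not. Delete the abandoned false starts in the tallness and P-ideal paragraphs.
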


\begin{proof}
$\Sc$ is an ideal: $(b[X\cup Y])'=(b[X])'\cup(b[Y])'$, and subsets
only shrink the accumulation set. It is proper:
$[T^\infty_\omega]=\omega^\omega$ is uncountable.

\emph{Tall.} Let $X$ be infinite. If $b[X]$ has an accumulation point
$z$, choose distinct $x_n\in X$ with $b_{x_n}\to z$; then
$Y=\{x_n:n\in\omega\}$ has $(b[Y])'=\{z\}$, so $Y\in\Sc$. Otherwise
$(b[X])'=\varnothing$ and $X\in\Sc$ itself.

\emph{Not a P-ideal.} $B$ is dense in $\omega^\omega$, so for every
$z\in\omega^\omega$ we may choose distinct points whose branches
converge to $z$. Fix a countable dense set
$\{z_j:j\in\omega\}\subseteq\omega^\omega$ and sets
$X_j=\{x_{j,i}:i\in\omega\}$ with $b_{x_{j,i}}\to z_j$; each
$X_j\in\Sc$. If $Y$ satisfied $X_j\subseteq^*Y$ for all $j$, then
$z_j\in(b[Y])'$ for all $j$, so $(b[Y])'$ is a closed set containing
a dense set: $(b[Y])'=\omega^\omega$ and $Y\notin\Sc$. Hence the
$X_j$ have no pseudounion in $\Sc$.

\emph{$\mathbf\Pi^1_1$-complete.} Upper bound: $X\notin\Sc$ iff
$[T^\infty_X]$ is uncountable iff $T^\infty_X$ contains a perfect
subtree $P$ (the tree of the perfect kernel of the closed set
$[T^\infty_X]$ works, and conversely $[P]\subseteq[T^\infty_X]$ is
uncountable); the condition ``$P$ is a perfect tree and every node of
$P$ has infinite trace in $X$'' is Borel in $(X,P)$, so the
complement of $\Sc$ is analytic. Hardness: fix an injection
$\langle\cdot,\cdot\rangle:\omega\times2\to\omega$ and, for a tree
$R$ on $\omega$, let $S(R)$ be the downward closure of
\[
 \bigl\{\bigl(\langle r(0),\varepsilon_0\rangle,\dots,
 \langle r(j-1),\varepsilon_{j-1}\rangle\bigr):
 r\in R\cap\omega^j,\ \varepsilon\in2^j\bigr\}.
\]
Then $[S(R)]$ is homeomorphic to $[R]\times2^\omega$: it is empty if
$R$ is well-founded, and contains a copy of $2^\omega$ (hence is
uncountable) if $R$ is ill-founded. The map is continuous: whether a
node lies in $S(R)$ reads one bit of $R$. Composing with the
Realization Lemma (Lemma~\ref{lem:realization}),
\[
 R\ \text{well-founded}\iff X_{S(R)}\in\Sc ,
\]
which reduces well-founded trees continuously to $\Sc$.
\end{proof}

\begin{theorem}[orthogonality]\label{thm:orthogonal}
$\Sc\subsetneq\HH$, and for every $\alpha<\omega_1$ there is
$X\in\Sc\setminus\HH_\alpha$, even with $[T^\infty_X]=\varnothing$.
\end{theorem}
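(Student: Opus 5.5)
We prove the two claims in order: first $\Sc\subseteq\HH$, then the stronger separation $X_\alpha\in\Sc\setminus\HH_\alpha$ with $[T^\infty_{X_\alpha}]=\varnothing$. The second also gives strictness of the inclusion.

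\emph{Inclusion.} Let $X\notin\HH$. By Theorem~\ref{thm:nucleus}, $T^\infty_X$ contains a $\Fin$-positive Laver tree $L$, in which every node has infinitely many successors. A Laver tree has $2^{\aleph_0}$ branches: choose two distinct successors at every node, which embeds $2^{<\omega}$ into $L$ and yields a perfect subtree. Hence $[T^\infty_X]\supseteq[L]$ is uncountable and $X\notin\Sc$. Contrapositively, $\Sc\subseteq\HH$.

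\emph{The separating sets.} We reuse the trees $S_\alpha$ from the strictness part of the proof of Theorem~\ref{thm:hierarchy} (with $\mathcal J=\Fin$). Recall $S_0=\{\varnothing\}$, successor steps place a copy of $S_\alpha$ below every child of the root, and limit steps place $S_{\alpha_n}$ below child $n$. Their roots die exactly at stage $\alpha+1$. We check by induction on $\alpha$ that each $S_\alpha$ is well-founded:
\begin{itemize}
 \item $S_0$ has no infinite branch.
 \item An infinite branch of $S_{\alpha+1}$ or $S_\lambda$ would pass through some child $\langle n\rangle$, and removing the first coordinate would give an infinite branch of $S_\alpha$ or $S_{\alpha_n}$, which does not exist by the induction hypothesis.
\end{itemize}
Now let $X_\alpha=X_{S_\alpha}$ be given by the Realization Lemma~\ref{lem:realization}. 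Then $T^\infty_{X_\alpha}=S_\alpha$, so $[T^\infty_{X_\alpha}]=[S_\alpha]=\varnothing$. In particular $X_\alpha\in\Sc$, since the empty set is countable. Moreover $X_\alpha\notin\HH_\alpha$, because the root survives $D^\alpha(S_\alpha)$. This gives $X_\alpha\in\Sc\setminus\HH_\alpha$ with empty branch body, as claimed. In particular $\Sc\not\subseteq\HH_\alpha$ for every $\alpha<\omega_1$.

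\emph{Strictness.} It remains to exhibit a member of $\HH\setminus\Sc$. Use the set $X=\{x:b_x\in D\}$ built in the proof of Theorem~\ref{thm:H1PC}, whose tree $T=T_X$ is finitely branching with $[T]$ a Cantor set. Since $T$ is finitely branching and pruned, each node of $T$ has infinitely many points of $X$ below it: every basic open set of the perfect set $[T]$ contains infinitely many elements of the dense set $D$. Hence $T^\infty_X=T$ and $[T^\infty_X]$ is uncountable, so $X\notin\Sc$. On the other hand $X\in\HH_1\subseteq\HH$, as shown in that proof (the root of $T^\infty_X$ has a single successor).

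The only delicate point is the identification $T^\infty_X=T$, rather than just $T_X=T$, and it follows from the density argument above.
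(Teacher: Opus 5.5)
Your proof is correct. The inclusion $\Sc\subseteq\HH$ (a Laver tree inside $T^\infty_X$ contains a perfect binary subtree) and the cofinal escape via the well-founded trees $S_\alpha$ of Theorem~\ref{thm:hierarchy}, realized by Lemma~\ref{lem:realization}, are exactly the paper's argument. Your explicit induction for well-foundedness just spells out what the paper states in a parenthesis.

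The genuine difference is the strictness witness. The paper realizes $S=\{\varnothing\}\cup\langle0\rangle\concat 2^{<\omega}$ directly by the Realization Lemma. Then $T^\infty_{X_S}=S$ is immediate, $[S]$ is a Cantor set, and the root has a single child. You instead borrow the fusion set of Theorem~\ref{thm:H1PC}. That proof only records $T_X=T$, so you must upgrade this to $T^\infty_X=T$.

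Your density argument does this correctly. For $t\in T$, the set $[t]\cap[T]$ is a nonempty (by prunedness) open subset of the perfect space $[T]$, so it meets the dense set $D$ in infinitely many points, and $x\mapsto b_x$ is injective. Finite branching plays no role in this step, despite your phrasing. The paper's route is shorter and self-contained. Yours produces a witness that happens also to lie in $\PC\setminus\GG_k$, which is not needed here.

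One slip: your opening paragraph says the second claim ``also gives strictness of the inclusion.'' It does not, since $X_\alpha\in\Sc\setminus\HH_\alpha$ says nothing about $\HH\setminus\Sc$. You correctly supply a separate witness later, so that sentence should simply be deleted.
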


\begin{proof}
$\Sc\subseteq\HH$: if $X\notin\HH$, then $T^\infty_X$ contains a
rooted Laver tree $L$ (Theorem~\ref{thm:nucleus} with
$\mathcal J=\Fin$), and $L$ contains a perfect binary subtree (every
node of $L$ has infinitely many children; keep two at each step), so
$[T^\infty_X]$ is uncountable and $X\notin\Sc$. The inclusion is
strict: realize a copy of the full binary tree hanging below a single
successor of the root, i.e.\ $X_S$ for
$S=\{\varnothing\}\cup\langle0\rangle\concat S_2$ with $S_2$ a copy
of $2^{<\omega}$ inside $\omega^{<\omega}$; then
$[S]$ is uncountable, so $X_S\notin\Sc$, while the root of $S$ has
one child, so $X_S\in\HH_1\subseteq\HH$.

Cofinal escape: the trees $S_\alpha$ of Theorem~\ref{thm:hierarchy}
(case $\mathcal J=\Fin$) are well-founded (by induction on the
construction: an infinite branch would enter a single hanging copy of
a previous tree), so $[S_\alpha]=\varnothing$ and
$X_{S_\alpha}\in\Sc$; but the root of $S_\alpha$ survives $\alpha$
derivatives, so $X_{S_\alpha}\notin\HH_\alpha$.
\end{proof}

\begin{remark}\label{rem:grid}
For $\alpha,\gamma<\omega_1$ let $\Sc_\alpha$ be the stratum of
Cantor--Bendixson rank $\leq\alpha$ of $[T^\infty_X]$ and
$\mathcal M_{\alpha,\gamma}=\Sc_\alpha\cap\HH_\gamma$. Contrary to
the compact-tree intuition, the fixed Cantor--Bendixson condition
need not be Borel. In fact, for every $\alpha<\omega_1$ and every
$1\leq\gamma<\omega_1$, the cell
$\mathcal M_{\alpha,\gamma}$ is $\mathbf\Pi^1_1$-hard.

To see this, let $S(R)$ be the tree used in the hardness proof of
Proposition~\ref{prop:C}, put
\[
 \widehat S(R)=\{\varnothing\}\cup
 \{\langle0\rangle\concat s:s\in S(R)\},
 \qquad X_R=X_{\widehat S(R)},
\]
and use the Realization Lemma. The map $R\mapsto X_R$ is continuous.
If $R$ is well-founded, then $[\widehat S(R)]=\varnothing$, so
$X_R\in\Sc_\alpha$, while the root of $\widehat S(R)$ has only one
successor, so $X_R\in\HH_1\subseteq\HH_\gamma$. If $R$ is
ill-founded, then $[\widehat S(R)]$ contains a copy of $2^\omega$,
so $X_R\notin\Sc_\alpha$. Thus well-founded trees reduce
continuously to $\mathcal M_{\alpha,\gamma}$. For $\gamma=0$ the
cell is simply $\Fin$, since $\HH_0=\Fin$.

Theorem~\ref{thm:orthogonal} still shows that the two coordinates are
independent in the strong sense that $\Sc$ escapes every
$\HH_\gamma$ already at Cantor--Bendixson rank $0$, but the
descriptive hardness is already present in each fixed cell with
$\gamma\geq1$. The fine structure of the grid --- strictness in each
coordinate, and whether it embeds pairs of ordinals into the
Kat\v etov order --- is posed as Problem~\ref{prob:grid}.
\end{remark}

\section{The quotient \texorpdfstring{$\Pow(\omega)/\HH$}{P(omega)/H}}\label{sec:quotient}

We write $\mathbb B=\Pow(\omega)/\HH$,
$\mathbb B_s=\Pow(A_s)/\HH(s)$, and
$\mathbb Q_{\HH}=(\HH^+,\leq_{\HH})$ with
$X\leq_{\HH}Y\iff X\setminus Y\in\HH$. A \emph{skeleton} over a rooted
Laver tree $L$ is a set $M=\{x_{s,n}:s,s\concat n\in L\}$ with
$x_{s,n}\in A_{s\concat n}$ distinct.

\begin{lemma}\label{lem:skeleton}
$X\notin\HH$ if and only if $X$ contains a skeleton over some rooted
Laver tree; every skeleton $M$ over $L$ satisfies $T^\infty_M=L$. In
particular skeletons are dense in $\mathbb Q_{\HH}$, literally (not
only modulo $\HH$).
\end{lemma}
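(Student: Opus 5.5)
The plan is to compute the trace tree of a skeleton first, and then read off the characterization from Theorem~\ref{thm:nucleus} with $\mathcal J=\Fin$.

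\emph{Trace of a skeleton.} Let $L$ be a rooted Laver tree (here: every node has infinitely many children in $L$) and $M=\{x_{s,n}:s,s\concat n\in L\}$. The argument follows the proof of Lemma~\ref{lem:realization}.
\begin{itemize}
 \item If $u\in L$, then $\suc_L(u)$ is infinite, so $M\cap A_u\supseteq\{x_{u,n}:n\in\suc_L(u)\}$ is infinite. Hence $u\in T^\infty_M$.
 \item If $u\notin L$ and $x_{s,n}\in A_u$, then $s\concat n$ and $u$ are comparable.
 \begin{itemize}
 \item $u\subseteq s$ is impossible, since $L$ is downward closed.
 \item $u=s\concat n$ determines the pair $(s,n)$ uniquely.
 \item $s\concat n\subsetneq u$ allows at most $|u|$ pairs.
 \end{itemize}
 So $M\cap A_u$ is finite and $u\notin T^\infty_M$.
\end{itemize}
This gives $T^\infty_M=L$.

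\emph{The equivalence.} Suppose $X\supseteq M$ for a skeleton $M$ over $L$. Then $L=T^\infty_M\subseteq T^\infty_X$. Since $L$ is a $\Fin$-positive Laver tree, Theorem~\ref{thm:nucleus} gives $X\notin\HH$.

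Conversely, suppose $X\notin\HH$. Theorem~\ref{thm:nucleus} gives a $\Fin$-positive Laver tree $L\subseteq T^\infty_X$. For each pair $(s,n)$ with $s\concat n\in L$, the set $X\cap A_{s\concat n}$ is infinite, because $s\concat n\in T^\infty_X$. We choose the points $x_{s,n}\in X\cap A_{s\concat n}$ by recursion along an enumeration of these countably many pairs. At each step we avoid the finitely many points already chosen, exactly as in Lemma~\ref{lem:realization}. The resulting $M$ is a skeleton over $L$ with $M\subseteq X$.

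\emph{Density.} Let $X\in\HH^+$. The skeleton $M\subseteq X$ just found is itself $\HH$-positive, by the first part of the equivalence. Since $M\setminus X=\varnothing$, we have $M\leq_\HH X$ with actual inclusion. So skeletons are dense in $\mathbb Q_\HH$ literally, not only modulo $\HH$.

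There is no real obstacle. The only point needing care is the finite-intersection count for $u\notin L$, which is a verbatim repeat of the realization argument.
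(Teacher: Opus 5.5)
Your proof is correct and follows the paper's argument essentially verbatim. You compute the trace by the counting argument of Lemma~\ref{lem:realization}, then obtain both directions from Theorem~\ref{thm:nucleus} with $\mathcal J=\Fin$, choosing the skeleton inside $X$ by recursion along the edges of $L$. (The subcase $u=s\concat n$ is actually vacuous here, since $s\concat n\in L$ while $u\notin L$, but counting it does no harm; the density clause you spell out is left implicit in the paper.)
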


\begin{proof}
If $M$ is a skeleton over $L$: for $u\in L$,
$M\cap A_u\supseteq\{x_{u,n}:u\concat n\in L\}$ is infinite ($L$ is
Laver); for $u\notin L$, the count of Lemma~\ref{lem:realization}
shows $M\cap A_u$ is finite. Hence $T^\infty_M=L$, and $M\notin\HH$
by Theorem~\ref{thm:nucleus}. Conversely, if $X\notin\HH$, take a
rooted Laver $L\subseteq T^\infty_X$
(Theorem~\ref{thm:nucleus}) and choose, by recursion along an
enumeration of the edges of $L$, a new point
$x_{s,n}\in X\cap A_{s\concat n}$ for each edge (possible, as each
$s\concat n\in T^\infty_X$): a skeleton inside $X$.
\end{proof}

\begin{theorem}[reduced-product decomposition]\label{thm:reduced}
For every node $s$: $X\in\HH(s)$ if and only if
$\{n:X\cap A_{s\concat n}\notin\HH(s\concat n)\}$ is finite, and
$[X]\mapsto\langle[X\cap A_{s\concat n}]\rangle_n$ is an isomorphism
\[
 \mathbb B_s\cong\prod_n\mathbb B_{s\concat n}\big/\Fin .
\]
\end{theorem}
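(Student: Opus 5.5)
The plan has two parts: first the membership criterion for $\HH(s)$, then the isomorphism it induces.

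\emph{Membership criterion.} By local homogeneity (Corollary~\ref{cor:homogeneity}) it suffices to treat $s=\varnothing$, with $\HH=\bigcup_{\alpha<\omega_1}\HH_\alpha$.

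For the forward direction, let $X\in\HH$, say $X\in\HH_\alpha$. I would show by induction on $\alpha$ that $\{n:X\cap A_{\langle n\rangle}\notin\HH(\langle n\rangle)\}$ is finite.
\begin{itemize}
\item At successors, the Fubini formula (Theorem~\ref{thm:fubini}, $\mathcal J=\Fin$) gives $X\cap A_{\langle n\rangle}\in\HH_\beta(\langle n\rangle)\subseteq\HH(\langle n\rangle)$ for all but finitely many $n$.
\item At limits, the limit clause of the same theorem reduces membership to an earlier level.
\end{itemize}

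For the converse, I would avoid ordinal bookkeeping and use the nucleus characterization (Theorem~\ref{thm:nucleus}). Suppose only finitely many sections are $\HH(\langle n\rangle)$-positive but $X\notin\HH$. Then $T^\infty_X$ contains a rooted $\Fin$-positive Laver tree $L$. The root has infinitely many children in $L$, so some child $\langle n\rangle\in L$ has $n$ outside the finite exceptional set. The cone of $L$ above $\langle n\rangle$, reindexed, is a rooted Laver tree inside the trace tree of $X\cap A_{\langle n\rangle}$ computed in the localized partition, using locality (Lemma~\ref{lem:persistence}(b)). Hence $X\cap A_{\langle n\rangle}\notin\HH(\langle n\rangle)$, a contradiction. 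The same Laver-tree argument also proves the forward direction directly, which is cleaner: if infinitely many sections are positive, glue their local Laver trees under the root to get a Laver tree in $T^\infty_X$.

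\emph{The isomorphism.} Define $\Phi(X)=\langle X\cap A_{s\concat n}\rangle_n$, a Boolean algebra isomorphism $\Pow(A_s)\cong\prod_n\Pow(A_{s\concat n})$ (intersection with the cells of the partition). Compose it with the quotient maps onto $\prod_n\mathbb B_{s\concat n}$ and then modulo $\Fin$. The composite is a surjective homomorphism, since every sequence of classes lifts to the union of representatives. By the criterion just proved, its kernel is exactly $\HH(s)$:
\[
X\mapsto0 \iff \{n:X\cap A_{s\concat n}\notin\HH(s\concat n)\}\in\Fin \iff X\in\HH(s).
\]
So it factors through an isomorphism $\mathbb B_s\cong\prod_n\mathbb B_{s\concat n}/\Fin$.

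\emph{Main obstacle.} The delicate step is the gluing in the positive direction. It needs a transfer statement: a rooted Laver tree in the localized trace tree of $X\cap A_{s\concat n}$ (relative to the reindexed partition of Corollary~\ref{cor:homogeneity}) becomes, after prefixing $s\concat n$, a subtree of $T^\infty_X$ above $s\concat n$. This holds because $T^\infty$ of a subset of $A_{s\concat n}$ is computed from the same cells $A_u$, $u\supseteq s\concat n$. I would state this compatibility explicitly before both directions.
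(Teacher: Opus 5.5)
Your proposal is correct and follows the paper's proof. The local equation is proved the same way: glue localized Laver trees under $s$ in one direction, and restrict a Laver tree to the cones above the children of $s$ in the other. The isomorphism then follows because the local equation identifies the kernel of the cellwise map with $\HH(s)$, which is the paper's symmetric-difference argument in other words. Your optional Fubini-formula induction for the direction $X\in\HH\Rightarrow$ finitely many positive sections is also valid, but as you note yourself, it is unnecessary.
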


\begin{proof}
\emph{Local equation.} If infinitely many sections
$X\cap A_{s\concat n}$ are $\HH(s\concat n)$-positive, each contains
(the trace of) a Laver tree rooted at $s\concat n$
(Theorem~\ref{thm:nucleus} localized); attaching them below $s$
yields a Laver tree rooted at $s$ inside $T^\infty_X$, so
$X\notin\HH(s)$. Conversely, a Laver tree rooted at $s$ has
infinitely many children of $s$, each rooting a Laver tree, so
$X\notin\HH(s)$ forces infinitely many positive sections.

\emph{Isomorphism.} The map is a Boolean homomorphism: operations are
computed cell by cell, and relative complements restrict correctly
($(A_s\setminus X)\cap A_{s\concat n}=A_{s\concat n}\setminus X$).
Well-definedness and injectivity are the local equation applied to
symmetric differences: $X\triangle Y\in\HH(s)$ iff all but finitely
many $n$ satisfy
$(X\cap A_{s\concat n})\triangle(Y\cap A_{s\concat n})
\in\HH(s\concat n)$, i.e.\ iff the images agree modulo $\Fin$.
Surjectivity: a sequence $\langle[Z_n]\rangle_n$ with
$Z_n\subseteq A_{s\concat n}$ is hit by $X=\bigcup_nZ_n$.
\end{proof}

\begin{corollary}\label{cor:selfref}
$\mathbb B\cong\mathbb B^\omega/\Fin$.
\end{corollary}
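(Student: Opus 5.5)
The plan is to apply Theorem~\ref{thm:reduced} at the root $s=\varnothing$ and then replace each factor using local homogeneity.

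At the root, Theorem~\ref{thm:reduced} gives
\[
 \mathbb B=\mathbb B_\varnothing\cong\prod_n\mathbb B_{\langle n\rangle}\big/\Fin .
\]
By Corollary~\ref{cor:homogeneity}, each localized algebra $\mathbb B_{\langle n\rangle}$ is isomorphic to $\mathbb B$. Concretely, I would fix for each $n$ a bijection $\sigma_n:A_{\langle n\rangle}\to\omega$ carrying the reindexed partition $\langle A_{\langle n\rangle\concat u}\rangle_u$ onto a tree partition of $\omega$. I would then compose it with the presentation-independence bijection of Proposition~\ref{prop:independence}. The result is a bijection $\rho_n:A_{\langle n\rangle}\to\omega$ satisfying $Y\in\HH(\langle n\rangle)\iff\rho_n[Y]\in\HH$, so it induces a Boolean isomorphism $\theta_n:\mathbb B_{\langle n\rangle}\to\mathbb B$.

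Next, the product map $\prod_n\theta_n:\prod_n\mathbb B_{\langle n\rangle}\to\mathbb B^\omega$ is a Boolean isomorphism, computed coordinatewise. It carries the ideal of finitely supported sequences onto the ideal of finitely supported sequences, since each $\theta_n$ preserves $0$. It therefore descends to an isomorphism $\prod_n\mathbb B_{\langle n\rangle}/\Fin\cong\mathbb B^\omega/\Fin$. Composing this with the root decomposition gives the claim.

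The only point needing care is the precise meaning of "$/\Fin$": it denotes the quotient of the full product by the ideal of sequences with all but finitely many coordinates zero. This is exactly the reading used in the proof of Theorem~\ref{thm:reduced}, where the kernel consists of the sets with only finitely many positive sections, and it is preserved coordinatewise by the $\theta_n$. I expect no real obstacle, since all the content sits in Theorem~\ref{thm:reduced} and Corollary~\ref{cor:homogeneity}. One small check remains: the ideal $\HH$ is transported by $\rho_n$ for every $\alpha$ simultaneously, and Proposition~\ref{prop:independence} states exactly this invariance.
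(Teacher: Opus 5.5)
Your proposal is correct and follows the paper's proof: apply Theorem~\ref{thm:reduced} at $s=\varnothing$, then use local homogeneity (Corollary~\ref{cor:homogeneity}) to get $\mathbb B_{\langle n\rangle}\cong\mathbb B$ for every $n$. Your check that the coordinatewise isomorphisms preserve the finite-support ideal, and therefore descend to the quotient, makes explicit a step the paper leaves implicit.
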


\begin{proof}
Theorem~\ref{thm:reduced} at $s=\varnothing$, plus local homogeneity
(Corollary~\ref{cor:homogeneity}):
$\mathbb B_{\langle n\rangle}\cong\mathbb B$ for every $n$.
\end{proof}

\begin{theorem}\label{thm:regular}
The map $\Phi([B]_{\Fin})=
[\bigcup_{n\in B}A_{\langle n\rangle}]_{\HH}$ is a unital embedding of
$\Pow(\omega)/\Fin$ into $\mathbb B$, and
$\pi([X])=
[\{n:X\cap A_{\langle n\rangle}\notin\HH(\langle n\rangle)\}]_{\Fin}$
is a forcing projection with $\pi\circ\Phi=\mathrm{id}$. The copy is
regular and $\mathbb Q_{\HH}$ factors over
$(\Pow(\omega)/\Fin)^+$.
\end{theorem}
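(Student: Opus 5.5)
The plan is to read everything off the reduced-product decomposition of Theorem~\ref{thm:reduced} at the root $s=\varnothing$. By that theorem, $\mathbb B\cong\prod_n\mathbb B_{\langle n\rangle}/\Fin$, and under this isomorphism both maps of the statement become transparent. The map $\Phi$ sends $[B]_{\Fin}$ to the sequence that equals $1$ on the coordinates $n\in B$ and $0$ elsewhere, modulo $\Fin$. The map $\pi$ sends a sequence to its support $\{n:\text{coordinate }n\neq0\}$, modulo $\Fin$.

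\textbf{Step 1: $\Phi$ is a unital embedding.} Well-definedness and injectivity follow from the local equation. The set $\bigcup_{n\in B}A_{\langle n\rangle}\,\triangle\,\bigcup_{n\in C}A_{\langle n\rangle}$ is the union of the full cells over $B\triangle C$. Full cells are $\HH(\langle n\rangle)$-positive, so this set lies in $\HH$ iff $B\triangle C$ is finite. The map obviously commutes with unions and complements, since the first-level cells partition $\omega$, and it sends $\omega$ to $\omega$.

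\textbf{Step 2: $\pi$ is well defined and $\pi\circ\Phi=\mathrm{id}$.} Changing $X$ by a set in $\HH$ changes its positive sections only at finitely many $n$, by the local equation applied to the symmetric difference together with the fact that each $\HH(\langle n\rangle)$ is an ideal. The identity $\pi\Phi([B])=[B]$ is immediate, because the section of $\bigcup_{n\in B}A_{\langle n\rangle}$ at $n$ is either the full cell or empty.

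\textbf{Step 3: $\pi$ is a forcing projection.} Order-preservation is clear: if $X\leq_{\HH}Y$, then for all but finitely many $n$ the section $X\cap A_{\langle n\rangle}\setminus Y$ lies in $\HH(\langle n\rangle)$, so the positive sections of $X$ are almost contained in those of $Y$. The main point is the density condition. Given $X$ and an infinite $C\subseteq^*\pi(X)$, I need $X'\leq X$ with $\pi(X')=[C]$. I take $X'=X\cap\bigcup_{n\in C}A_{\langle n\rangle}$. Its positive sections are exactly $C\cap\pi(X)$, which is $=^*C$. So $\pi(X')=[C]$, and $X'$ is $\HH$-positive because $C$ is infinite.

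\textbf{Step 4: regularity and factoring.} Regularity of $\Phi$ follows from the existence of the projection. If $\{[B_i]\}$ is a maximal antichain in $\Pow(\omega)/\Fin$ and $[X]\neq0$ were disjoint from every $\Phi([B_i])$, then $\pi([X])\neq0$, since $X$ positive forces infinitely many positive sections. Also $\pi([X])$ is disjoint from each $[B_i]$: the sections of $X$ at $n\in B_i$ are null modulo finitely many $n$. This contradicts maximality. The factorization of $\mathbb Q_{\HH}$ over $(\Pow(\omega)/\Fin)^+$ is then the standard consequence of the projection $\pi$: $\mathbb Q_{\HH}$ is equivalent to a two-step iteration of $(\Pow(\omega)/\Fin)^+$ followed by the quotient by the generic filter.

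I expect the only delicate point to be Step 3. There one must check that the localized notion of positivity used in $\pi$ matches the local equation of Theorem~\ref{thm:reduced}, which is stated for the full $\HH$ and not for $\Hw$. This works because Theorem~\ref{thm:reduced} is formulated exactly with $\HH(s\concat n)$, via the Laver-tree criterion of Theorem~\ref{thm:nucleus}.
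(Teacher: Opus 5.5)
Your proposal is correct and follows essentially the same route as the paper: both arguments rest on the local equation of Theorem~\ref{thm:reduced}, both witness the projection property with the same set $X\cap\bigcup_{n\in C}A_{\langle n\rangle}$, and both extract regularity from the projection. Your maximal-antichain argument for regularity is just the unpacked form of the paper's remark that $\pi([X])$ is a reduction of $[X]$ to the copy $\Phi[\Pow(\omega)/\Fin]$.
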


\begin{proof}
$\Phi$ preserves the Boolean operations because the first-level cones
partition $\omega$. Its kernel is $\Fin$: a finite union of cones is
in $\HH$ (each cone $A_{\langle n\rangle}$ has trace with a single
child of the root), while an infinite union contains a rooted Laver
tree (full cones below infinitely many children). Hence $\Phi$ is an
embedding, clearly unital.

$\pi$ is well defined on classes and order-preserving: if
$X\setminus Y\in\HH$, then, by the local equation
(Theorem~\ref{thm:reduced}), positive sections of $X$ are positive
for $Y$ with finitely many exceptions. And
$\pi(\Phi([B]))=[\{n\in B:A_{\langle n\rangle}\notin
\HH(\langle n\rangle)\}]=[B]$, since every cell is positive for its
own localized ideal.

\emph{Projection property.} Let $X\in\HH^+$ and
$[B]\leq\pi([X])$ with $B$ infinite. Put
$Y=X\cap\bigcup_{n\in B}A_{\langle n\rangle}$. Infinitely many
$n\in B$ have $X\cap A_{\langle n\rangle}$ positive (as
$B\subseteq^*\pi$-set), and $Y$ has the same sections there, so $Y$
is positive by the local equation; moreover $Y\subseteq X$ and
$\pi([Y])\leq[B]$. This is the definition of a forcing projection,
and it also shows regularity of the copy: given $[B]\leq\pi([X])$,
the condition $Y$ is below both $X$ and
$\Phi([B])$ (literally $Y\subseteq\bigcup_{n\in B}
A_{\langle n\rangle}$), so $\pi([X])$ is a reduction of $[X]$ to the
copy, and every maximal antichain of $(\Pow(\omega)/\Fin)^+$ remains
maximal below the copy. Hence $\mathbb Q_{\HH}$ factors over
$(\Pow(\omega)/\Fin)^+$.
\end{proof}

\begin{theorem}\label{thm:sigmaclosed}
$\mathbb Q_{\HH}$ is $\sigma$-closed; hence it is proper, its
completion is $\omega$-distributive, and it adds no reals.
\end{theorem}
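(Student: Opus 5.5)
Given a decreasing sequence $X_0\geq_{\HH}X_1\geq_{\HH}\cdots$ of $\HH$-positive sets, I will produce an $\HH$-positive $Y$ with $Y\leq_{\HH}X_n$ for every $n$. The natural tool is Lemma~\ref{lem:skeleton}: positivity means containing a skeleton over a rooted Laver tree, and a skeleton has trace exactly its tree. First I make the sequence literally decreasing by replacing $X_n$ with $X'_n=X_0\cap\dots\cap X_n$. This set differs from $X_n$ by a member of $\HH$, since $X_n\setminus X_j\in\HH$ for $j\leq n$. So $X'_n$ is positive and equivalent to $X_n$, and from now on I assume $X_{n+1}\subseteq X_n$.

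Then I run a fusion along the tree. By Theorem~\ref{thm:nucleus}, each $T^\infty_{X_n}$ contains a rooted Laver tree, and these trace trees are decreasing in $n$. I build a single rooted Laver tree $L$ together with points $x_{s,j}\in A_{s\concat j}$, handling the nodes of $L$ level by level. A node $s\in L$ of length $n$ will be chosen inside a Laver tree $L_s$ rooted at $s$ that is contained in $T^\infty_{X_n}$, and its children in $L$ will be infinitely many children $s\concat j$ that carry Laver trees $L_{s\concat j}\subseteq T^\infty_{X_{n+1}}$ rooted at $s\concat j$.

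The key step, and the main obstacle, is keeping this possible at every stage. I need that whenever $X_n$ is positive at $s$, meaning $X_n\cap A_s\notin\HH(s)$, infinitely many children $s\concat j$ have $X_{n+1}\cap A_{s\concat j}\notin\HH(s\concat j)$. For the root this comes from the local equation of Theorem~\ref{thm:reduced}, applied to $X_{n+1}\cap A_s$. For a general node, positivity of $X_{n+1}$ at $s$ is not guaranteed merely by positivity of $X_n$ at $s$. This is exactly where the naive fusion breaks, because the sets could migrate to different cones.

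My remedy is to choose the nodes adaptively. At stage $n$ I keep only those nodes $s$ of level $n$ in the current partial tree at which $X_n\cap A_s$ is $\HH(s)$-positive. I then need to show that for all but a set of nodes that is small in the Fubini sense, positivity persists for the later $X_m$. I would formalize this with the local equation iterated through the reduced-product decomposition: since $X_0\setminus X_m\in\HH$, applying Theorem~\ref{thm:reduced} recursively shows that for each $m$, the nodes where $X_0$ is positive but $X_m$ is not form a subtree without a Laver subtree. When choosing the children of $s$, I diagonalize against these exceptional sets for $m\leq |s|+1$, finitely many at each node, keeping cofinitely many good children.

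Finally I set $Y=\{x_{s,j}\}$, picking each $x_{s,j}\in X_{|s|+1}\cap A_{s\concat j}$ and keeping all the points distinct. By Lemma~\ref{lem:skeleton}, $Y$ is a skeleton over $L$, so $Y\in\HH^+$. For each $n$, the points of $Y$ outside $X_n$ are among the finitely many $x_{s,j}$ with $|s|<n$, so $Y\setminus X_n$ is finite and hence in $\HH$. This gives $Y\leq_{\HH}X_n$ for all $n$. The remaining clauses are standard consequences of $\sigma$-closure: properness, $\omega$-distributivity of the completion, and no new reals.
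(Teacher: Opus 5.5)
\textbf{There is a genuine gap: the fusion cannot be carried out.} Your reduction to a literally decreasing sequence is correct and matches the paper. The trouble is that you index the sequence by tree depth, and that makes your invariant unsatisfiable.

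Take $X_m=\bigcup_{n\geq m}A_{\langle n\rangle}$. These sets are $\HH$-positive (full cones below infinitely many children of the root) and literally decreasing. For $s\neq\varnothing$ one has $s\in T^\infty_{X_m}$ iff $s(0)\geq m$. Your invariant puts every node $s\in L$ of length $n\geq1$ inside $T^\infty_{X_n}$, so $s(0)\geq|s|$. Your point condition $x_{s,j}\in X_{|s|+1}\cap A_{s\concat j}$ even forces $s(0)\geq|s|+1$. But if $\langle j\rangle\in L$, then the Laver tree $L$ has a node of length $j+1$ above it, whose first coordinate is $j<j+1$. So no rooted Laver tree satisfies your invariant.

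In general, the invariant forces $X_m\cap A_s\notin\HH(s)$ for every $s\in L$ and every $m$. Its level-one part is then a decreasing intersection of infinite sets of children, which can be empty, as it is here.

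Your remedy does not rescue this. The exceptional sets $E_m$ are indeed small in the Fubini sense level by level. But Fubini-small does not mean ``finitely many children at each node''. In the example, $E_{n+1}$ contains every child of every node $s$ with $s(0)\leq n$. Those nodes were committed to $L$ before $X_{n+1}$ was consulted, so at such $s$ there are no good children at all. (Also, $E_m$ is not downward closed, so it is not a subtree.)

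\textbf{The missing idea} is to distribute the $X_k$ over different \emph{cones} rather than different \emph{levels*}, so that all failures of containment in $X_m$ are confined to finitely many first-level cones. This is the paper's proof:
\begin{enumerate}
\item By the local equation of Theorem~\ref{thm:reduced}, choose distinct indices $n_k$ with $X_k\cap A_{\langle n_k\rangle}\notin\HH(\langle n_k\rangle)$.
\item Take a localized skeleton $M_k\subseteq X_k\cap A_{\langle n_k\rangle}$ (Lemma~\ref{lem:skeleton}) and put $M=\bigcup_kM_k$.
\item $M$ is positive, since its trace contains a rooted Laver tree.
\item $M\setminus X_m\subseteq\bigcup_{k<m}M_k$ lies in finitely many first-level cones, hence in $\HH_1$.
\end{enumerate}

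\textbf{A second, repairable error} is in your last step. $Y\setminus X_n$ is not finite: the root alone contributes infinitely many points $x_{\varnothing,j}$. What is true is that $\{x_{s,j}:|s|\leq n-2\}$ is a skeleton over a tree of finite height, and therefore lies in $\HH_{n-1}$.
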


\begin{proof}
Let $[X_0]\geq[X_1]\geq\cdots$ be a descending sequence in
$\mathbb Q_{\HH}$ and put $Y_k=\bigcap_{j\leq k}X_j$. For every
$j<k$ we have $[X_k]\leq[X_j]$, hence $X_k\setminus X_j\in\HH$, and
therefore
\[
 X_k\setminus Y_k=\bigcup_{j<k}(X_k\setminus X_j)\in\HH .
\]
Thus $[Y_k]=[X_k]$, and after replacing $X_k$ by $Y_k$ we may assume
$X_{k+1}\subseteq X_k$ literally. By the local equation
(Theorem~\ref{thm:reduced}), each $X_k$ has infinitely many positive
first-level sections; recursively choose \emph{distinct} indices
$n_k$ with $X_k\cap A_{\langle n_k\rangle}$ positive, and skeletons
$M_k\subseteq X_k\cap A_{\langle n_k\rangle}$
(Lemma~\ref{lem:skeleton}, localized). The diagonal
$M=\bigcup_kM_k$ is positive: its trace contains a rooted Laver tree
(infinitely many children of the root, each carrying a localized
Laver tree). For every $m$,
$M\setminus X_m\subseteq\bigcup_{k<m}M_k$, a finite union of subsets
of single cones, which lies in $\HH$; so $[M]\leq[X_m]$ for all $m$.
\end{proof}

\begin{theorem}[constant-trace antichain]\label{thm:antichain}
Let $X\in\HH^+$ and let $L\subseteq T^\infty_X$ be rooted Laver. There
is $\{M_r:r\in2^\omega\}\subseteq\Pow(X)$ with $T^\infty_{M_r}=L$ for
all $r$ and $M_r\cap M_q$ finite for $r\neq q$. Consequently below
every positive condition there is an antichain of size $\cfrak$, the
cellularity and density of $\mathbb B$ are $\cfrak$, and
$X\mapsto T^\infty_X$ recognizes neither the order nor the
compatibility of the quotient.
\end{theorem}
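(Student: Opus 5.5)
The plan is to build an almost disjoint family of skeletons over the fixed Laver tree $L$, indexed by branches of $2^{<\omega}$. First enumerate the edges $(s,n)$ of $L$, i.e.\ pairs with $s,s\concat n\in L$. For each edge the set $X\cap A_{s\concat n}$ is infinite, since $s\concat n\in L\subseteq T^\infty_X$. By recursion along an enumeration of all pairs $((s,n),\sigma)$ with $\sigma\in2^{<\omega}$, choose pairwise distinct points $y_{s,n,\sigma}\in X\cap A_{s\concat n}$. This is possible because only finitely many points are used at each stage.

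Next fix an injection $\iota$ from the set of edges of $L$ into $\omega$; it serves as a length index. For $r\in2^\omega$ put
\[
 M_r=\{y_{s,n,r\restr\iota(s,n)}:(s,n)\ \text{an edge of}\ L\}.
\]
Each $M_r$ has exactly one point in $A_{s\concat n}$ for every edge, so it is a skeleton over $L$. Lemma~\ref{lem:skeleton} then gives $T^\infty_{M_r}=L$, and in particular $M_r\in\HH^+$ and $M_r\subseteq X$.

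For almost disjointness, let $r\neq q$ with $\Delta(r,q)=k$. A common point must equal $y_{s,n,r\restr\iota(s,n)}=y_{s',n',q\restr\iota(s',n')}$. Since the $y$'s are distinct, this forces $(s,n)=(s',n')$ and $r\restr\iota(s,n)=q\restr\iota(s,n)$, i.e.\ $\iota(s,n)\leq k$. By injectivity of $\iota$ there are only finitely many such edges, so $M_r\cap M_q$ is finite.

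For the consequences, finite sets lie in $\HH$, so $M_r\wedge M_q=0$ in $\mathbb B$. Hence $\{[M_r]\}$ is an antichain of size $\cfrak$ below $[X]$. Cellularity and density are therefore at least $\cfrak$, and they are at most $|\Pow(\omega)|=\cfrak$. The skeletons $M_r$ all share the trace $L$ but are pairwise incompatible, while each is compatible with itself. So the trace map cannot recognize compatibility, and likewise cannot recognize the order, since $[M_r]\not\leq[M_q]$ despite equal traces.

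I expect no real obstacle. The only point needing care is that each $M_r$ is a genuine skeleton, with one fresh point per edge, so that Lemma~\ref{lem:skeleton} applies verbatim; the distinctness of all the $y$'s handles this.
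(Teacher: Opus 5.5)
Your proposal is correct and is essentially the paper's own construction: the paper also picks globally distinct points $p_i(j)\in X\cap A_{s_i\concat m_i}$ along an enumeration of the edges of $L$ and sets $M_r=\{p_i(f_r(i)):i\in\omega\}$ with $f_r(i)$ a code of $r\restr i$. That is exactly your $y_{s,n,r\restr\iota(s,n)}$ with $\iota$ the edge enumeration, and the paper derives the consequences as you do. (One wording nit: since the cells are nested, $M_r$ has one \emph{designated} point per edge rather than exactly one point in each $A_{s\concat n}$, and that is all the skeleton definition requires.)
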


\begin{proof}
Enumerate the edges of $L$ as
$\langle(s_i,s_i\concat m_i):i\in\omega\rangle$. Since the edge cells
may be nested, choose the auxiliary sets simultaneously: enumerate
$\omega\times\omega$ and, at each step $(i,j)$, pick a new point
$p_i(j)\in X\cap A_{s_i\concat m_i}$ different from all points chosen
so far (possible: each edge cell meets $X$ in an infinite set, and
only finitely many points are in use). This yields pairwise disjoint
infinite sets $P_i=\{p_i(j):j\in\omega\}\subseteq X$, with all points
globally distinct. Take an eventually different family
$\{f_r:r\in2^\omega\}$, e.g.\ $f_r(i)=$ (a code of) $r\restr i$, so
that $r\neq q$ implies $f_r(i)\neq f_q(i)$ for all large $i$. Put
\[
 M_r=\{p_i(f_r(i)):i\in\omega\} .
\]
Each $M_r$ picks exactly one point in each edge cell of $L$, so it is
a skeleton over $L$ and $T^\infty_{M_r}=L$
(Lemma~\ref{lem:skeleton}). For $r\neq q$, the intersection
$M_r\cap M_q\subseteq\{p_i(f_r(i)):f_r(i)=f_q(i)\}$ is finite. Two
distinct $M_r,M_q$ are incompatible in $\mathbb Q_{\HH}$: a common
lower bound would be almost contained in the finite set
$M_r\cap M_q$, hence in $\HH$. This gives an antichain of size
$\cfrak$ below $[X]$; cellularity and density $\cfrak$ follow (the
upper bound is the cardinality of $\Pow(\omega)$). Since all the
$M_r$ share the same trace tree, the map $X\mapsto T^\infty_X$
recognizes neither order nor compatibility.
\end{proof}

\begin{remark}[it is not Laver forcing]\label{rem:notlaver}
Three obstructions. (i) If $s\neq\varnothing$ then $A_s\in\HH$, so no
condition can fix a stem. (ii) By Theorem~\ref{thm:antichain}, fixing
the whole trace tree leaves continuum many pairwise incompatible
conditions: the decoration of points along the edges is an essential
part of a condition. (iii) Laver forcing adds a dominating real
\cite{Laver,GoldsternEtAl}, while $\mathbb Q_{\HH}$ adds no reals
(Theorem~\ref{thm:sigmaclosed}); accordingly, $\HH$ is not the
Hru\v s\'ak--Zapletal trace ideal of the Laver $\sigma$-ideal. The
quotient by that trace ideal factors with Laver forcing and therefore
adds a dominating real
\cite[Example~3.7 and Proposition~3.8]{HrusakZapletal}. For other
ideals directly associated with Laver and Miller trees, see
\cite{CieslakMartinez}.
\end{remark}

\begin{corollary}[CH]\label{cor:CH}
Under CH, $\mathbb Q_{\HH}$ is forcing-equivalent to
$(\Pow(\omega)/\Fin)^+$.
\end{corollary}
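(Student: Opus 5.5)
Under CH both posets are $\sigma$-closed, separative, of size $\aleph_1$, and below every condition they carry antichains of size $\aleph_1$. I will show that any such poset is forcing-equivalent to the tree $(\omega_1^{<\omega_1},\supseteq)$, which is $\mathrm{Add}(\omega_1,1)$. Applying this to both $\mathbb Q_{\HH}$ and $(\Pow(\omega)/\Fin)^+$ gives the corollary.

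\emph{Verifying the hypotheses.} For $\mathbb Q_{\HH}$ I pass to its separative quotient $\mathbb B\setminus\{0\}$, which is harmless since it is a quotient Boolean algebra. It has size at most $2^{\aleph_0}=\aleph_1$. It is $\sigma$-closed by Theorem~\ref{thm:sigmaclosed}. By Theorem~\ref{thm:antichain}, below every condition there is an antichain of size $\cfrak=\aleph_1$; in particular the poset is atomless. For $(\Pow(\omega)/\Fin)^+$ the same three facts are classical: it is $\sigma$-closed by the usual diagonal pseudo-intersection argument, and below every infinite set there is an almost disjoint family of size $\cfrak$. The common feature I need is that every antichain below a condition extends to a maximal antichain below it of size exactly $\aleph_1$.

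\emph{Building a dense tree.} Fix such a poset $\mathbb P$ and enumerate it as $\langle p_\xi:\xi<\omega_1\rangle$. By recursion on levels $\alpha<\omega_1$ I build a map $s\mapsto r_s$ from $\omega_1^{<\omega_1}$ into $\mathbb P$ with the following properties.
\begin{enumerate}[label=(\roman*)]
 \item The map is order-reversing.
 \item For each $s$, the set $\{r_{s\concat\gamma}:\gamma<\omega_1\}$ is a maximal antichain below $r_s$.
 \item At a limit length $\lambda$, $r_s$ is a lower bound of $\langle r_{s\restr\beta}:\beta<\lambda\rangle$, which exists by $\sigma$-closure since $\lambda$ is countable.
 \item Level $\alpha+1$ refines $p_\alpha$: for every $s$ of length $\alpha$, each successor $r_{s\concat\gamma}$ is either below $p_\alpha$ or incompatible with it.
\end{enumerate}
For (iv), I first take a maximal antichain below $r_s$ consisting of conditions that decide $p_\alpha$ in this sense, and then refine it to size $\aleph_1$ using the antichains of Theorem~\ref{thm:antichain}.

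Each level is a maximal antichain of $\mathbb P$. At successor levels this is immediate from (ii). At limit levels it requires that every condition be compatible with some limit node. Given $q$, I descend through the levels below $\lambda$, at each step choosing a node compatible with the current extension of $q$. This uses the maximality of the successor antichains together with $\sigma$-closure to carry the extension through countably many steps.

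The levels have size $\aleph_1^{\aleph_0}=\aleph_1$ under CH. This is exactly where CH is used: it keeps the tree within $\aleph_1$ and lets the bookkeeping of the enumeration $\langle p_\xi\rangle$ cover all of $\mathbb P$.

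\emph{Density and conclusion.} Let $D=\{r_s\}$. Given $p=p_\alpha$, some node $r_s$ of length $\alpha+1$ is compatible with $p$, because that level is a maximal antichain. By (iv), $r_s\leq p$. Hence $D$ is dense, and $s\mapsto r_s$ is an isomorphism onto $D$ up to separativity. The forcing-equivalence follows, and both posets are then equivalent to $\mathrm{Add}(\omega_1,1)$.

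The main obstacle is the limit stage: showing that each limit level is still a maximal antichain, so that (iv) remains available at later successor stages. The descending-construction argument needs care, since a branch through the nodes chosen so far may have several lower bounds. I would handle this by including all of them, a set of size $\aleph_1$, and then refining.
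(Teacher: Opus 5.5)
Your route differs from the paper's. The paper checks that the separative quotient $\mathbb B\setminus\{0\}$ is atomless (Theorem~\ref{thm:antichain}), $\omega_1$-closed (Theorem~\ref{thm:sigmaclosed}) and of size $\cfrak=\omega_1$. It then quotes Kurili\'c's Fact~2.7(b), which compares such a poset directly with $(\Pow(\omega)/\Fin)^+$. You instead reprove the underlying characterization and send both posets to the tree $\omega_1^{<\omega_1}$. That makes the argument self-contained, but the tree construction then carries all the weight, and as written it fails at limit levels.

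With a single lower bound $r_s$ per limit node, as in (iii), level $\lambda$ is not a maximal antichain. Suppose $a$ is a lower bound of $\langle r_{s\restr\beta}:\beta<\lambda\rangle$ with $a\perp r_s$. Then $a$ is incompatible with every node of level $\lambda$. For $t\neq s$, let $\beta$ be least with $t(\beta)\neq s(\beta)$; then $r_t$ lies below a sibling of $r_{s\restr(\beta+1)}\geq a$.

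Such an $a$ always exists in both posets, which are positive parts of Boolean algebras. The branch strictly decreases, so the differences $r_{s\restr\beta}\wedge\neg r_s$ are nonzero and decreasing. By $\sigma$-closure they have a nonzero lower bound $a$, which is a lower bound of the branch disjoint from $r_s$. Consequently no element of $D$ lies below $a$, and $D$ is not dense.

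Your last sentence names the right remedy, a maximal antichain of lower bounds for each branch. But it is incompatible with having one $r_s$ for each $s\in\omega_1^\lambda$, so the claim that $s\mapsto r_s$ is an isomorphism onto $D$ does not survive it.

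The repair is to drop (iii) and change the indexing:
\begin{itemize}
\item Attach conditions only to the root and to sequences of successor length.
\item For $s$ of limit length $\lambda$, let $\{r_{s\concat\gamma}:\gamma<\omega_1\}$ enumerate a maximal antichain of size $\aleph_1$ inside the set of lower bounds of $\langle r_{s\restr(\beta+1)}:\beta<\lambda\rangle$, consisting of conditions that decide $p_\lambda$.
\end{itemize}
This set of lower bounds is nonempty by $\sigma$-closure and downward closed. That is exactly what your descending argument needs to show that every successor level is a maximal antichain. The map then preserves order and incompatibility, and its image is dense in $\mathbb P$ by your bookkeeping. Its domain is dense in $\omega_1^{<\omega_1}$, since every $t$ of limit length is extended by $t\concat0$. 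This is enough for forcing equivalence.
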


\begin{proof}
The separative quotient of $\mathbb Q_{\HH}$ is
$\mathbb B\setminus\{0\}$, separative as the nonzero part of a
Boolean algebra; atomless by Theorem~\ref{thm:antichain};
$\omega_1$-closed by Theorem~\ref{thm:sigmaclosed} (descending chains
of countable limit length reduce to cofinal $\omega$-subsequences);
and of size $\cfrak=\omega_1$ under CH (at most $\cfrak$ by
cardinality, at least $\cfrak$ by the antichain). Fact 2.7(b) of
\cite{Kurilic} states that, under $\cfrak=\omega_1$, every atomless
separative $\omega_1$-closed pre-order of size $\omega_1$ is
forcing-equivalent to $(\Pow(\omega)/\Fin)^+$.
\end{proof}

\begin{remark}
By Corollary~\ref{cor:CH} no ZFC invariant can separate
$\mathbb Q_{\HH}$ from $(\Pow(\omega)/\Fin)^+$; the right question is
whether ZFC \emph{proves} the equivalence, and the pertinent invariant
is the distributivity number. See Problem~\ref{prob:hQ}.
\end{remark}

\section{Open problems}\label{sec:problems}

\begin{problem}[central]\label{prob:central}
Decide whether $\GG_2\leq_K\GG_3$. By Proposition~\ref{prop:radial}
there is no radial witness. A negative answer gives
Conjecture~\ref{conj:Gk} in its first case and a strict
$(\omega,\geq)$-chain in the Kat\v etov order; a positive answer
collapses all Kat\v etov types of the family. The missing piece for
the negative is a canonization lemma; the strictness technique of
\cite{FKK} is the natural candidate.
\end{problem}

\begin{problem}\label{prob:limits}
The remaining sides of the triangle of
Remark~\ref{rem:threelimits}: does $\Hw\leq_K\Fwp$ hold? Does
$\Hw\leq_K\Fw$? By Theorem~\ref{thm:katH} both questions are
equivalent to the existence of isomorphic copies. A positive answer
to the second would yield $\Hw<_K\Fw$; together with a negative answer
to the first it would yield the strict chain
$\Fwp<_K\Hw<_K\Fw$ of amalgamations. This is motivated by, but does
not itself settle a new case of, the transfinite conjecture of
\cite{Kwela} (Conjecture 5.3); see Remark~\ref{rem:kwelascope}.
\end{problem}

\begin{problem}\label{prob:cf}
For arbitrary $f:\omega\to r$ and $c_f(\{x,y\})=f(\Delta(x,y))$,
classify the ideals generated by the $c_f$-homogeneous sets, by
inclusion and by Kat\v etov, in terms of the (recurrent) fibers of
$f$. Theorems \ref{thm:divisibility} and \ref{thm:kgeql} are the first
two cases.
\end{problem}

\begin{problem}\label{prob:random}
Locate the random ideal $\mathcal R$ of \cite{HrusakEtAl} with respect
to the family $\GG_k$. Since $\GG_2$ is the Kat\v etov maximum of the
family (Theorem~\ref{thm:kgeql}), one direction reduces to comparing
with $\GG_2$.
\end{problem}

\begin{problem}\label{prob:hQ}
Is $h(\mathbb Q_{\HH})=h$? Does ZFC prove that $\mathbb Q_{\HH}$ is
forcing-equivalent to $(\Pow(\omega)/\Fin)^+$? If ZFC proved
$h(\mathbb Q_{\HH})=\omega_1$, then MA$+\neg$CH would separate the two
forcings and, with Corollary~\ref{cor:CH}, the equivalence would be
independent.
\end{problem}

\begin{problem}\label{prob:residual}
Compute the quotient forcing of $\mathbb Q_{\HH}$ over
$(\Pow(\omega)/\Fin)^+$ induced by the projection of
Theorem~\ref{thm:regular}. The two-step analysis of \cite{Kurilic} is
the template.
\end{problem}

\begin{problem}\label{prob:DJ}
For Borel $\mathcal J$: is the hierarchy
$\langle\HH^{\mathcal J}_\alpha\rangle$ strict in the Kat\v etov
order, not only under inclusion? Can the separation rank of
$\HH^{\mathcal J}_\alpha$ be computed from those of $\mathcal J$ and
$\alpha$? Is there reflection:
$\HH^{\mathcal J}\leq_K\HH^{\mathcal K}$ implies
$\mathcal J\leq_K\mathcal K$ under natural hypotheses?
\end{problem}

\begin{problem}\label{prob:grid}
Is the grid $\mathcal M_{\alpha,\gamma}=\Sc_\alpha\cap\HH_\gamma$ of
Remark~\ref{rem:grid} strict in each coordinate? Does it realize an
embedding of pairs of ordinals into the Kat\v etov order (not only
under inclusion)?
\end{problem}

\medskip
\noindent\textbf{Statement on the use of AI-assisted tools.}
During the preparation of this manuscript, the author used Claude
(Anthropic) and Codex (OpenAI) to assist in writing computer code for
finite examples and in translating, revising, reorganizing, and drafting
parts of the text. All AI-assisted output was critically reviewed,
edited, and independently verified by the author. The mathematical ideas
and original results are the author's own, and the author assumes full
responsibility for the final version, including its accuracy, originality,
and integrity.

\end{document}